\documentclass[preprint,11pt]{elsarticle}

\usepackage{amssymb}
\usepackage{amsmath}
\usepackage{graphicx}

\usepackage{amsthm}

\usepackage{mathrsfs}

\usepackage{natbib}
\usepackage{multirow}%
\usepackage{adjustbox}
\usepackage{subfig} 
\usepackage{xspace}
\usepackage{float}
\usepackage[framemethod=tikz]{mdframed}
\usepackage{subfig} %
\usepackage{caption}   
\usepackage{enumitem}
\usepackage[utf8]{inputenc}
\usepackage{booktabs}  
\usepackage{tabularx}

\newtheorem{theorem}{Theorem}
\newtheorem{proposition}[theorem]{Proposition}%

\newtheorem{assumption}[theorem]{Assumption}

\newtheorem{example}{Example}%

\begin{document}

\begin{frontmatter}



\title{Conditional Expectation expression in mean-field SDEs and its applications}


\author{Samaneh Sojudi and Mahdieh Tahmasebi} 

\affiliation{Department of Applied Mathematics, Faculty of Mathematical Sciences, Tarbiat Modares University, P.O. Box 14115-134, Tehran,
Iran}

\begin{abstract}
This study developed a novel formulation of conditional expectations within the framework of a jump-diffusion mean-field stochastic differential equation. We introduce an integrated approach that combines unconditioned expectations with rigorously defined weighting factors, employing Malliavin calculus on Poisson space and directional derivatives to enhance estimation accuracy.
\noindent The proposed method is applied to the numerical pricing of American put options in a jump-diffusion mean-field setting, addressing the challenges proposed by early-exercise features. Comprehensive numerical experiments demonstrate substantial improvements in pricing accuracy compared with conventional techniques.
\noindent Moreover, we investigate variance-reduction techniques with a focus on localization and control variates, and discuss their practical implementation in detail.
\end{abstract}



\begin{keyword}
Malliavin Calculus, mean-field, jump diffusion, Poisson space, American options
\end{keyword}

\end{frontmatter}




\section{Introduction}

\noindent In systems where many agents or market participants interact, the individual dynamics of asset prices or risk factors intertwine with the aggregate behavior of the entire system. This “mean-field” influence plays a crucial role in modeling assets within interconnected markets. In such scenarios, agents adjust their strategies based on the statistical distribution of all participants' states rather than on isolated individual dynamics. For instance, in energy markets or smart grid systems, the price of an energy commodity is affected not only by macroeconomic factors but also by the collective consumption and production behavior of many participants. Extended mean-field game models capture these interactions by incorporating coupling terms that depend on the law of the underlying state variable. Mean-field stochastic differential equations (SDEs)—also known as McKean-Vlasov SDEs or distribution-dependent SDEs—trace their origins to the pioneering work of Kac \citep{ref-kac1956} in 1956, which was subsequently extended by McKean \citep{ref-mckean1966} in 1966 in the study of large systems of interacting stochastic particles. Subsequent advances by Graham \citep{ref-graham1992} introduced McKean-Vlasov Itô-Skorohod equations and nonlinear diffusions with discrete jump sets. These equations provide a rigorous framework for modeling the collective dynamics of large ensembles, making them indispensable in both statistical physics and the quantitative analysis of large-scale social and economic interactions. \\
McKean-Vlasov SDEs have become central to contemporary research on mean-field games and the pricing of complex financial derivatives, as discussed extensively in \citep{ref-carmona2018} and \citep{ref-bush2011}, \citep{ref-hambly2019}, \citep{ref-dawson1995}, and the references therein. The integration of mean-field effects with jump components has recently emerged as a powerful modeling paradigm, capturing both the abrupt shocks typical of financial markets and the collective behavior inherent in complex systems. In these frameworks, the evolution of asset prices or risk factors becomes inseparable from the statistical distribution of the entire system, an aspect central to both financial markets and energy networks. The extension of mean-field games to include jump processes—such as those governed by doubly Poisson systems—allows for realistic analyses of phenomena ranging from sudden market crashes to strategic shifts in energy demand. Motivated by the effectiveness of such models, this work investigates the pricing of American options whose the underlying asset dynamics are governed by a mean-field stochastic differential equation with jumps.\\
\noindent Pricing and hedging American options in high dimensions remains a persistent and significant challenge in computational finance. Traditional deterministic techniques—including finite difference, finite element, and lattice schemes—are rendered computationally impractical due to the exponential growth in cost and memory requirements associated with the curse of dimensionality, especially when $d>3$. Consequently, Monte Carlo methods emerge as the only feasible computational approach for accurately estimating option prices and sensitivities in these high-dimensional settings.\\
\noindent Recent advancements have coalesced around three primary methodological streams for tracking this problem. The first stream employs tree-based grid discretizations of diffusion processes, typified by the fundamental work of Broadie and Glasserman \citep{ref-broadie1997}, Bally, Pagès, and Printems \citep{ref-bally2003}, and Barraquand and Martineau \citep{ref-barraquand1995}. The second stream applies truncated-basis regression in $\mathcal{L}^2$ spaces to estimate the crucial conditional expectations, pioneered by Longstaff and Schwartz \citep{ref-longstaff2001} and further developed by Tsitsiklis and Van Roy \citep{ref-tsitsiklis2002}. The third and the most relevant stream utilizes Malliavin calculus to derive exact representation formulas, enabling purely Monte Carlo algorithms even in complex, nonlinear frameworks, as demonstrated in \citep{ref-bouchard2004}, \citep{ref-fournie1999}, and \citep{ref-fournie2001}.\\
\noindent The application of Malliavin calculus to solve the complexities of American option pricing under stochastic dynamics is a fertile and rapidly expanding area of research \citep{ref-abbas2012, ref-bally2005, ref-kharrat2019, ref-kharrat2022}. Among the notable advancements are the representation formulas for constant volatility models introduced by Bally et al. \citep{ref-bally2005}, and subsequent generalizations by Abbas-Turki and Lapeyre \citep{ref-abbas2012}, Kharrat \citep{ref-kharrat2019}, and Kharrat and Bastin \citep{ref-kharrat2022}, which extend these techniques to various settings involving stochastic volatility. Petrou further broadened the theoretical framework by generalizing the results of Fournié et al. to financial markets influenced by Lévy processes. More specialized applications include Mancino's novel weight technique for Delta hedging under local volatility \citep{ref-mancino2020}, and Saporito's advancement of multiscale stochastic volatility models for the valuation of path-dependent derivatives \citep{ref-saporito2020}, and Yamada's approximation scheme for multidimensional Stratonovich SDEs within the SABR model  \citep{ref-yamada2019}. Crucially for our current task, Daveloose et al. \citep{ref-daveloose2019} derived an explicit expression for conditional expectations in Lévy and jump-diffusion contexts, enabling efficient pricing of American options and their sensitivities. Most recently, Kharrat \citep{ref-kharrat2024} has leveraged Malliavin calculus to evaluate conditional expectations for American put options in models featuring both stochastic volatility and stochastic interest rates, where both factors follow the Cox-Ingersoll-Ross dynamics.\\
\noindent This paper makes a significant advance by introducing a novel setting for computing conditional expectations in the context of mean-field stochastic differential equations  (SDEs) featuring jumps. Before this work, the application of Malliavin calculus in this domain has primarily relied on differentiation with respect to the Wiener process for systems without jumps. Subsequently,  in \citep{ref-daveloose2019}, the authors applied a similar approach to Lévy and jump-diffusion SDEs with a Wiener-Malliavin weight. Our methodology builds upon existing literature associated with Malliavin’s differential and integral calculus in Poisson space. Specifically, we employ directional differentiation with respect to the Poisson process—a technique whose systematic application in the mean-field setting is entirely new.\\
\noindent The article is organized as follows. 
In Section 1, we introduce mean-field SDEs with jumps, cite references confirming the existence and uniqueness of their solutions, and prove the existence of the associated stochastic flow, demonstrating an explicit representation for the directional Malliavin derivatives within this framework. Section 2 is devoted to the core methodology contribution: representing conditional expectations in terms of appropriately weighted ordinary expectations via our adapted Malliavin calculus, and examining the wider implications for computational performance in mean-field SDEs with jumps. Section 3 explores the integration of localization methods into our conditional expectation representations. leading to numerically efficient expressions in mean-field SDEs with jumps. Section 4 culminates with a thorough application of the derived representation formulas to the valuation of American put options governed by mean-field SDEs with jumps, followed by an analysis of the numerical results from the proposed computational procedure. \\
Detailed information on Malliavin calculus in Poisson space can be found in the exhaustive references \citep{ref-nualart2018}, \citep{ref-song2015}, \citep{ref-song2018}. We summarized various notations, definitions, computational rules, and properties, succinctly, in \ref{appendixA}. We will elucidate the practical implementation of the algorithm for pricing American options for our analysis in \ref{appendixC}.
\section{Mean-field SDEs with jumps}
\noindent In this section, we will introduce mean-field stochastic differential equations with jumps, providing a comprehensive framework for analyzing complex stochastic systems. We present the Malliavin differentiability of their solution and find its expression. \\
\noindent 
 Let $ \Theta \subset \mathbb{R}^d $ be an open set containing the origin, $\Theta_0 := \Theta \setminus \{0\}$. Consider the space \((\Omega, \mathcal{F}, \mathcal{P})\) as the Wiener-Poisson complete probability space, where \(\Omega = \Omega_1 \times \Theta_0\) and \(\mathcal{F} := \{\mathcal{F}_t \mid t \in [0, T]\}\) is the right-continuous and \(\mathcal{P}\)-complete natural filtration generated by independent \( d \)-dimensional Brownian motion and the Poisson random measure \(N(dt, dz)\) with intensity $ dt \nu(dz) $, where $\nu(dz) = \kappa(z) dz$ for $\kappa \in C^1(\Theta_0; (0, \infty)),$ satisfying 
 \begin{equation}\label{eqt2}
		|\nabla \log \kappa(z)| \leq \mathcal{C} \varrho(z) := \mathcal{C} (1 \vee \text{dist}(z, \Theta^c)^{-1}),  \quad \int_{\Theta_0} (1 \wedge |z|^2) \kappa(z) dz < \infty,
	\end{equation}
in which $C^1(\mathcal{X}; \mathcal{Y})$ denotes the set of Borel measurable function $\mathcal{X} \rightarrow \mathcal{Y}$ that is continuously differentiable, and
 $ \text{dist}(z, \Theta^c) $ represents the distance from $ z $ to the complement of $ \Theta_0 $.	
\noindent Define the compensated Poisson process ${\tilde{N}}(dt, dz) := N(dt, dz) - \nu(dz) dt,$ and consider the following mean-field SDE with jumps:
\begin{align}\label{eqt3}
&dX_t ^x =b(t, X_t ^x, \rho_{X_t ^x})dt + \sigma(t, X_t ^x, \pi_{X_t ^x}) dW_t + \int_{\mathbb{R}_0}^{}{\lambda(t, X_t ^x, z, \eta_{X_t ^x})}\tilde{N}(dt, dz), ~~X(0)= x \in \Theta_0,
\end{align} 
where the functions $b:\Omega \times [0, T] \times \mathbb{R}^d \times \mathbb{R}^d\to \mathbb{R}^d, \sigma: \Omega \times  [0, T]\times \mathbb{R}^d \times \mathbb{R}^d \to \mathbb{R}^{d\times d}$, and $\lambda: \Omega \times [0, T]\times \mathbb{R}^d \times \mathbb{R}_0  \times \mathbb{R}^d \to \mathbb{R}^{d}$ are $\mathcal{F}$-measuable, and
\begin{equation*}
\rho_{X_t ^x} := \mathbb{E}[\phi(X_t ^x)], \hspace{5mm} \pi_{X_t ^x} := \mathbb{E}[\psi(X_t ^x)], \hspace{5mm} \eta_{X_t ^x}:= \mathbb{E}[\xi(X_t ^x)],
\end{equation*}
in which $\phi, \psi, \xi: \mathbb{R}^d \to \mathbb{R}^d$ are also measurable functions. We assume the following conditions for these functions throughout the paper and denote $|y|$ for the Euclidean norm in the associated space of $ y$. \\
\begin{assumption}\label{Assum1}
The functions \( b, \sigma, \lambda, \phi, \psi, \xi \) satisfy the following conditions:
\begin{enumerate}
  \item The functions $ b, \sigma, \phi, \psi$,  and \( \xi \) are continuously differentiable and adhere to a Lipschitz condition with a bounded parameter \( {C} \).
  \item The coefficients \( b, \sigma, \) and \( \lambda \) satisfy the linear condition and 
\begin{align*}
 |\sigma(t, X_t^x, \pi_{ X_t^x}) - \sigma(s, Z_s^z, \pi_{ Z_s^z})|^2  \leq \mathcal{C}(|t - s|^2 + |X_t^x - Z_s^z|^2 + \mathbb{E}[|X_t^x - Z_s^z|^2] \vert z \vert).
  \end{align*}
  \item   For the function \( \zeta : \mathbb{R}_0 \to \mathbb{R} \) satisfying \( \int_{\mathbb{R}_0} \zeta^p(z) \, \nu(dz) < \infty \) for every \( p \geq 1 \), 
\begin{align*}
  	|\lambda(t, X_t^x, z, \eta_{ X_t^x}) - \lambda(s, Z_s^z, z, \eta_{ Z_s^z})|^2  \leq \zeta(z)(|t - s|^2 + |X_t^x - Z_s^z|^2 + \mathbb{E}[|X_t^x - Z_s^z|^2]).
  \end{align*}  
  \end{enumerate}
\end{assumption}

\noindent Kurtz and Xiong \citep{ref-kurtz1999} established that, under Assumption \ref{Assum1}, where $\nu=0$, there exists a pathwise unique solution \( X^x = \{X_t^x \mid t \in [0, T]\} \) for every initial condition \( x \in \mathbb{R}^k \) to  mean-field SDE \eqref{eqt3}, satisfying \( \mathbb{E}[X^2(t)] < \infty \) for all \( t \). The extension to cases involving jumps has been achieved using similar arguments in \citep{ref-kurtz1999}, and has been proved in \citep{ref-song2022}. Thanks to Theorem 2.11 in \citep{ref-kunita2004}, or Theorem 3.1 in \citep{ref-song2022}, for any \( p \geq 2 \), there exists a positive constant \( \mathcal{C}_p \) such that
	\begin{align}
		\mathbb{E} \left( \sup_{0 < x \leq t} |X(x)|^p \right) \leq \mathcal{C}_p.\label{xp}
	\end{align}
\noindent Denote  \( \mathcal{L}^p(\Omega) \) be the space of all measurable random variables with the finite norm $\|F\|_{\mathcal{L}^p}:= \left(\mathbb{E}|F|^p\right)^{1/p}$.
Define \( \partial_j = \frac{\partial}{\partial x_j} g \), for \( i = 1, 2, \ldots, n \) and any \( g = g(t, x_1, x_2, \ldots, x_n) \), and let $\sigma_n$ is the $n$-th row of $\sigma$. We suppose for each \( x \in \mathbb{R}^d \), the functions $b$, and $\sigma_n$ are differentiable, and set, for $j=1, 2$,
\begin{align*}
\|\partial_j b\|_1 &:= \sup_{X_t^x \in \mathbb{R}^d} |\partial_j b(t, X_t^x, \rho_{ X_t^x})|, \qquad \|\partial_j \sigma_n\|_1 := \sup_{X_t^x \in \mathbb{R}^d} |\partial_j \sigma_n(t, X_t^x, \pi _{X_t^x})|,  \\
 \|\partial_j \lambda\|_1& := \sup_{X_t^x \in \mathbb{R}^d, z \in \Theta_0} |\partial_j \lambda(t, X_t^x, z, \eta_{X_t^x})|,  \qquad A_t ^{x} := {\partial_1} b(t, X_{t} ^x, \rho_{ X_{t} ^x}), \\
 B_{t}^x &:= {\partial_1} \sigma(t, X_{t} ^x, \pi_{ X_{t} ^x}),\qquad \qquad \qquad 
M_{t,z}^x := {\partial_1} \lambda(t, X_{t}^x, z, \eta_{ X_{t} ^x}).
\end{align*}
\begin{assumption}\label{Assum2}
\begin{enumerate}
\item For j=1, 2, the functions $\partial_j b$ and $\partial_j \sigma_n$ are Lipschitz functions, and it holds that 
       $\|\partial_j b\|_1 + \|\partial_j \sigma_n\|_1 < \infty,$
      \item The function \( \lambda : \Omega \times [0, T] \times \mathbb{R}^d \times \mathbb{R}^d \to \mathbb{R}^{d \times d} \), satisfies 
     \begin{align*}
    	\inf_{x \in \mathbb{R}^d, z \in \Theta_0} \left|\det\left(I + \partial_1 \lambda(s, x, z, \eta_x)\right)\right| \geq 0, \quad and \quad
    	\|\partial_j \lambda\|_1  < \infty,  \quad j=1, 2, 3. {\hspace{3mm}} 
    \end{align*}
    and for all \( t, s \in [0, T] \), and stochastic processes \( X_t^x \) and \( Z_t^z \), 
\begin{align*}\label{A8}
|\partial_j \lambda(t, X_t^x, z, \eta_{X_t^x}) - \partial_j \lambda(t, Z_s^z, z, \eta_{Z_s^z})| \leq \zeta(z)  \mathcal{C} (|t - s| + |X_t^x - Z_s^z| + \mathbb{E}[|X_t^x - Z_s^z|]).
\end{align*}
\end{enumerate}
\end{assumption}
\noindent Let \( Y_t^x \) be the solution to the homogeneous stochastic differential equation:
\begin{equation}\label{eqt5}
dY_t^x = A_t^x Y_t^x \, dt +  B_{t}^x Y_t^x \, dW_t + \int_{\mathbb{R}_0} M_{t,z}^x Y_t^x \, \tilde{N}(dz, dt), \qquad Y_0=1.
\end{equation}
%
Under Assumption \ref{Assum1} and according to Lemma 3.2 in \citep{ref-song2022}, 
it follows that \( Y_t^x \) is almost surely invertible for all \( t \in [0, T] \), which is satisfying
\begin{align*}
(Y_{t} ^x)^{-1} = 1&- \int_{0}^{t} (Y_{s} ^x)^{-1} \tilde{A}_s \, ds -  \int_{0}^{t} (Y_{s} ^x)^{-1} B_{s} ^x \, dW_{s} - \int_{0}^{t} \int_{\mathbb{R}^d} (Y_{s}^x)^{-1} M_{s,z} ^x (I + M_{s,z} ^x)^{-1} \, \tilde{N}(dz, ds).
\end{align*}
where
\[
\tilde{A}_t := A_{t} ^x - (B_{t} ^x)^2 - \int_{\mathbb{R}^d} (M_{t,z} ^x)^2 (I + M_{t,z} ^x)^{-1} \nu(dz).
\]
In addition, for \( p \geq 1 \)
\begin{align}
	\mathbb{E}\Big(\sup_{0 \leq t \leq T}\vert (Y_t^x)^{-1} \vert^p\Big)+ \mathbb{E}\Big(\sup_{0 \leq t \leq T}\vert Y_t^x\vert^p \Big) < \infty.\label{yy-1}
\end{align}
%
%
Now, assume that for all \( x \in \mathbb{R}^d \), \( \lambda(t, x, z,\eta_{x}) \) belongs to \( C^1([0,T]\times \mathbb{R}^d \times \Theta_0 \times \mathbb{R}^d; \mathbb{R}^d) \), and 
\begin{equation}\label{lambdanu}
 \int_{\Theta_0} |\partial_z \lambda(t, x, z, \eta_{x})|^{2} \nu(dz) \leq \mathcal{C}(1 + |x|^{2}). 
\end{equation}
\noindent Having established sufficient conditions on the coefficients, we now confirm the Malliavin differentiability of the solution. By invoking Theorem 11.4.3 and Proposition 7.5.2 from \citep{ref-nualart2018}, we affirm for any $r \leq t$
\begin{align*}
	D_{r,z_0} X_t = \partial_z \lambda(r, X_r, z_0, \eta_{X_r}) & +  \int_r^t A_s^x D_{r, z_0}X_s \, ds  + \int_r^t B_s^x  D_{r,z_0}X_s \, dW_s\\
&+ \int_r^t \int_{\mathbb{R}_0} M_{s,z}^x D_{r,z_0}X_{s-} \, \tilde{N}(dz, ds), 
\end{align*}
\begin{align*}
D_r^w X_t = \sigma (r, X_r, \pi_{X_r}) + \int_r^t A_s^x D_r^w(X_s) \, ds  + \int_r^t B_s^x  D_r^w(X_s) \, dW_s+ \int_r^t \int_{\mathbb{R}_0} M_{t,z}^x D_r^w(X_{s-}) \, \tilde{N}(ds, dz).
\end{align*}
\noindent By noting that $D_r X_t$ fulfills equation \eqref{eqt5} with the initial point $\sigma (r, X_r^x, \pi_{X_r^x})$, and in accordance with Lemma 3.3 in \cite{ref-song2022}, we deduce for every $v \in  \mathcal{V}^\infty$, defined in Appendix A, 
\begin{align*}
D_{\bf{v}}X_t^x & :=\int_0^1 \int_{\Theta_0} \langle v(s, z), D_{s, z}X_t \rangle_{\mathbb{R}^d} {N}(dz, ds) \\
&= Y_{t} ^x\int_0^1 \int_{\Theta_0} (Y_{s} ^x)^{-1} \partial_{z}\lambda(s; X_{s} ^x; z; \eta_{ X_{s} ^x}) (I + M_{s,z} ^x)^{-1} v(s,z) N(dz, ds),
\end{align*}
and 
\begin{align*}
\qquad D_r^w X_{t} ^x = Y_{t} ^x(Y_{r} ^x)^{-1} \sigma(r; X_{r} ^x, \pi_{ X_{r} ^x}) \mathbf{1}_{r \leq t}.
\end{align*}
\begin{theorem}
 For \(p > 1\) and \(\vartheta = (h, v) \in \mathcal{H}^\infty \times \mathcal{V}^\infty\), the process $D_{\vartheta} X_t \in W^{1, \vartheta, p}$, defined in the Appendix A.
\end{theorem}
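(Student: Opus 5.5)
The plan is to show that $D_\vartheta X_t$ is itself directionally Malliavin differentiable with moments of all orders, i.e.\ that it lies in $\mathcal{L}^p(\Omega)$ and that $D_\vartheta(D_\vartheta X_t)$ exists and is in $\mathcal{L}^p(\Omega)$. We work with the explicit representation obtained above,
\begin{align*}
D_\vartheta X_t^x = Y_t^x\int_0^t (Y_s^x)^{-1}\sigma(s,X_s^x,\pi_{X_s^x})h(s)\,ds + Y_t^x\int_0^1\int_{\Theta_0}(Y_s^x)^{-1}\partial_z\lambda(s,X_s^x,z,\eta_{X_s^x})(I+M^x_{s,z})^{-1}v(s,z)\,N(dz,ds),
\end{align*}
which is the Wiener part, paired with $h$, plus the Poisson part, paired with $v$.

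\textbf{Step 1: integrability.} Apply Hölder's inequality to each factor. The suprema of $Y^x$ and $(Y^x)^{-1}$ are in every $\mathcal{L}^p$ by \eqref{yy-1}. The factor $\sigma$ has linear growth, so it is controlled by \eqref{xp}, and $h\in\mathcal{H}^\infty$ is bounded. For the jump integral, first write $N=\tilde N+\nu\,ds$. Then use the Kunita/BDG inequality for Poisson integrals together with \eqref{lambdanu}, the boundedness of $(I+M)^{-1}$ (which follows from the determinant bound in Assumption \ref{Assum2}), and the integrability and bounded support of $v\in\mathcal{V}^\infty$. Together these give $\|D_\vartheta X_t\|_{\mathcal{L}^p}<\infty$ for every $p$.

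\textbf{Step 2: the second directional derivative.} Apply $D_\vartheta$ to the equations for $Y^x$, $(Y^x)^{-1}$ and $X^x$. Differentiating \eqref{eqt5} formally shows that $D_\vartheta Y_t^x$ satisfies a linear SDE with the same coefficients $A,B,M$. Its inhomogeneous terms involve $\partial_1^2 b,\partial_1^2\sigma,\partial_1^2\lambda$ multiplied by $D_\vartheta X$ and $Y$, together with $\partial_z M$ terms coming from the Poisson direction. By variation of constants this equation has an explicit solution of the form $Y_t\int Y_s^{-1}(\cdots)$. The derivative $D_\vartheta (Y^x)^{-1}$ is then obtained from $D_\vartheta(YY^{-1})=0$. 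For the $N(dz,ds)$ integral, use the product/chain rule for directional derivatives on Poisson space from Appendix A, where $D_v$ acting on $\int f\,dN$ produces $\int D_vf\,dN+\int \partial_z f\cdot v\,dN$ (the divergence term involving $\nabla\log\kappa$ controlled by \eqref{eqt2}). Combining these with the product rule gives $D_\vartheta D_\vartheta X_t$ as a finite sum of products of terms already bounded in Step 1 with new terms involving second derivatives of the coefficients.

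\textbf{Step 3: bounds, closability, and the main obstacle.} Each term from Step 2 is estimated in $\mathcal{L}^p$ by Hölder, BDG/Kunita and Gronwall, exactly as in Step 1. To make the formal differentiation rigorous, approximate by Picard iterates (or by smooth truncations of $\kappa$ and $\lambda$), show uniform $W^{1,\vartheta,p}$ bounds, and invoke closability of $D_\vartheta$. The main obstacle is regularity. Assumption \ref{Assum2} only gives Lipschitz $\partial_j b,\partial_j\sigma,\partial_j\lambda$, not $C^2$ coefficients. I would therefore use the Lipschitz property to obtain bounded a.e.\ derivatives and argue by mollification, with limits taken through the uniform bounds. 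The other delicate point is the $z$-derivative of $(I+M_{s,z})^{-1}\partial_z\lambda$ near $\partial\Theta$. There the weight $\varrho$ in \eqref{eqt2} and the vanishing of $v$ near the boundary (part of the definition of $\mathcal{V}^\infty$) should provide the needed integrability.
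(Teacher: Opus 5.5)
Your Step 1 is, in substance, the paper's entire proof. The paper splits $D_\vartheta X_t$ into $\int_0^1\langle h(s),D^w_sX_t\rangle\,ds$ and $D_{\mathbf v}X^x_t$. It bounds the first piece by Cauchy--Schwarz and H\"older using \eqref{yy-1} and \eqref{xp}, and simply declares the second finite by \eqref{lambdanu}. It writes the $W^{1,\vartheta,p}$-norm of $D_\vartheta X_t$ on the left-hand side, but everything it estimates is an $\mathcal{L}^p$ norm of $D_\vartheta X_t$; it never computes $D_\vartheta(D_\vartheta X_t)$. So the paper really establishes only $D_\vartheta X_t\in\mathcal{L}^p(\Omega)$, i.e.\ $X_t\in W^{1,\vartheta,p}$, which is presumably the intended meaning. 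For that claim, your Step 1 (splitting $N=\tilde N+\nu\,ds$ and using Kunita) is a somewhat more careful version of the same argument.

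Steps 2--3, which aim at the literal statement, have a genuine gap under the paper's hypotheses.

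\textbf{Missing $z$-regularity.} Applying $D_{\mathbf v}$ to $\int_0^t\int_{\Theta_0} Y_s^{-1}\partial_z\lambda(s,X_s,z)(I+M_{s,z})^{-1}v(s,z)\,N(dz,ds)$ necessarily produces $\nabla_z$ of the integrand, hence $\partial_z^2\lambda$ and $\partial_zM_{s,z}=\partial_z\partial_1\lambda$. The paper assumes only $\lambda\in C^1$. The Lipschitz bound on $\partial_j\lambda$ in Assumption~\ref{Assum2} is in the state and measure arguments at \emph{fixed} $z$, so nothing gives second-order regularity in $z$. Your mollification device handles the a.e.\ second $x$-derivatives of $b,\sigma,\lambda$, but it yields no uniform bounds in the $z$-direction. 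The weight $\varrho$ and the behaviour of $v$ near $\partial\Theta$ address integrability, not the existence or size of these derivatives. Also, the $\nabla\log\kappa$ term you invoke belongs to the divergence $\delta(\vartheta)$ in the duality formula, not to $D_{\mathbf v}$ of an $N$-integral.

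\textbf{Random directions.} In the paper's setting $h$ and $v$ are random adapted/predictable fields. The product rule therefore also generates $D_\vartheta h$ and $D_\vartheta v$, which the $\mathcal{H}^p$ and $\mathcal{V}^p$ norms do not control.

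\textbf{Smaller points.} Bounded support (or vanishing near $\partial\Theta$) of $v$ is not part of the definition of $\mathcal{V}^p$. Boundedness of $(I+M)^{-1}$ needs a strictly positive lower bound on $|\det(I+\partial_1\lambda)|$; the paper's ``$\geq 0$'' is vacuous.

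To prove the literal claim you would have to add hypotheses, for example $\lambda$ of class $C^2$ in $z$ with controlled $\partial_z^2\lambda$ and $\partial_zM$, and $\vartheta$ deterministic or itself $D_\vartheta$-differentiable. Otherwise you should stop after Step 1, as the paper in effect does.
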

\begin{proof}
In Appendix A, we note that 
\begin{align*}
D_\vartheta X_t & =\int_0^1 \langle h(s), D_s^w X_t  \rangle_{\mathbb{R}^d} ds + D_{\bf{v}}X_t^x.
\end{align*}
From Definition 2.1, applying the Cauchy-Schwarz inequality, Yung inequality, and Proposition 3.1 in \cite{ref-song2022}, we deduce 
\begin{align*}
\Vert D_\vartheta X_t \Vert_{\varphi; 1, p}^p& \leq \Vert \int_0^1 \langle h(s), D_s^w X_t  \rangle_{\mathbb{R}^d} ds \Vert_{\mathcal{L}^p}^p+ \Vert D_{\bf{v}}X_t^x \Vert_{\mathcal{L}^p}^p\\
& \leq \mathbb{E}\Big(\Big|\int_0^1 \vert \langle h(s)\vert^2 ds \int_0^1 \vert D_s^w X_t  \vert^2 ds\Big|^p\Big) +\Vert D_{\bf{v}}X_t^x \Vert_{\mathcal{L}^p}^p\\
& \leq \Vert  h\Vert_{\mathcal{L}_p^2}^2 \mathbb{E}\Big(\sup_{0 \leq t, r \leq 1}\vert Y^x_t (Y_r^x)^{-1}\vert^{2(p+q)}\Big)^{\frac{1}{2q}}  
\mathbb{E}\Big(\int_0^1 \vert  \sigma(r; X_{r} ^x, \pi_{ X_{r} ^x}) \vert^{4p} dr\Big)^{\frac{1}{2q}} +\Vert D_{\bf{v}}X_t^x \Vert_{\mathcal{L}^p}^p \\
&< \infty.
\end{align*}
where $\frac1p+\frac1q=1$, and we used \eqref{yy-1}, \eqref{lambdanu}, \eqref{xp} in the last inequality.
\end{proof}

\section{Conditional expectation expression in mean-field SDEs}
\noindent In this section, we endeavor to determine conditional expectations with ordinary expectations weighted by appropriate factors through the application of Malliavin calculus. This method utilizes advanced stochastic calculus techniques to convert intricate conditional expectations into more tractable forms. Moreover, we will examine the broader implications of this approach for addressing complex problems and enhancing computational efficiency in mean-field SDEs with jumps.  
\subsection{Conditional expectation}
\noindent In this subsection, we derive a representation for the conditional expectation associated with Poisson Malliavin weights. This representation is essential for understanding the conditional expectation and its implications in various applications, including financial modeling and statistical analysis.\\
Let \( C'_b \) denote the space of continuously differentiable functions with bounded derivatives,
and \( \mathcal{H}(x) \) represent the Heaviside step function adjusted by a constant \( c \in \mathbb{R} \), defined as
    \begin{align*}
    \mathcal{H}(x) := \mathbf{1}_{\{x \geq 0\}} + c.
    \end{align*}
\begin{theorem} \label{th1}
 Assume \( f \in C'_b \), and \( F \) and \( G \) be solutions of mean-field stochastic differential equations with jumps, both belonging to the Malliavin space \( W^{1, \vartheta, 2} \). Consider \(u(.,.) \in \mathcal{V}^{\infty -}\) satisfying \( f(F) u(.,.)
 \in \mathcal{L}^2(\Omega \times [0,T] \times \Theta_0)\), 
and the following integrability and normalization condition  
\begin{equation}  \label{condition1}
    \mathbb{E}\left[ \int_0^T \int_{\Theta_0} \langle u(t,z), D_{t,z} G \rangle_{\mathbb{R}^d}\, {N}(dz, dt) \mid \sigma(F, G) \right] = 1.  
\end{equation}  

\noindent Then, for any \( \alpha \in \mathbb{R} \), the conditional expectation of \( f(F) \) given \( G = \alpha \) is expressed as  
\begin{align} \label{eq7} 
    \mathbb{E}[f(F) \mid G = \alpha] = \frac{  
    \mathbb{E} \Big[ \mathcal{H}(G - \alpha) \Big(f(F) \Pi_1  
    +  f'(F) \Pi_2 \Big)\Big]  
    }{  
    \mathbb{E} \left[ \mathcal{H}(G - \alpha) \delta(u) \right]  
    },  
\end{align}  
where
\begin{align}\label{weights}
	&\Pi_1=\delta(u)= \int_{0}^{T}\int_{\Theta_0}^{}\frac{div(\kappa u)(s,z)}{\kappa(z)}\tilde{N}(dz, ds),\qquad \Pi_2= \displaystyle \int_{0}^{T}\int_{\Theta_0}^{} \langle u(t,z), D_{t,z}F \rangle_{\mathbb{R}^d} {N}(dz, dt).\notag\\
\end{align}
\end{theorem}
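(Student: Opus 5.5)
The plan is the standard Fournié--Lions type argument, with Poisson directional derivatives replacing the Wiener derivatives. Write the conditional expectation as a ratio of densities,
\begin{align*}
\mathbb{E}[f(F)\mid G=\alpha]=\frac{\mathbb{E}[f(F)\,\delta_\alpha(G)]}{\mathbb{E}[\delta_\alpha(G)]}.
\end{align*}
Since $\mathcal{H}'=\delta_0$ in the distributional sense, the task is to express $\mathbb{E}[f(F)\mathcal{H}'(G-\alpha)]$ without derivatives on $\mathcal{H}$. To justify this rigorously, I would first replace $\mathcal{H}$ by a smooth approximation $\mathcal{H}_\epsilon\in C^1_b$, for instance a mollification of $\mathbf{1}_{\{x\ge0\}}$ plus $c$. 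I then work with $\mathbb{E}[f(F)\mathcal{H}_\epsilon'(G-\alpha)]$ and let $\epsilon\to0$ at the end. This limit is the usual density argument: both sides are continuous in $\alpha$ whenever $G$ has a density, and the weights lie in $\mathcal{L}^2$.

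The key steps are as follows.

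\textbf{Step 1 (chain rule).} The directional derivative $D_{\bf v}$ along $u$ is defined pathwise through $N(dz,dt)$ and satisfies the chain rule from Appendix A. Hence
\begin{align*}
D_u\mathcal{H}_\epsilon(G-\alpha)=\mathcal{H}_\epsilon'(G-\alpha)\int_0^T\!\!\int_{\Theta_0}\langle u,D_{t,z}G\rangle N(dz,dt).
\end{align*}
Multiplying by $f(F)$, taking expectations and conditioning on $\sigma(F,G)$, the normalization \eqref{condition1} gives
\begin{align*}
\mathbb{E}[f(F)\mathcal{H}_\epsilon'(G-\alpha)]=\mathbb{E}\big[f(F)\,D_u\mathcal{H}_\epsilon(G-\alpha)\big].
\end{align*}
The point is that $f(F)\mathcal{H}_\epsilon'(G-\alpha)$ is $\sigma(F,G)$-measurable.

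\textbf{Step 2 (product rule).} I write
\begin{align*}
f(F)\,D_u\mathcal{H}_\epsilon(G-\alpha)=D_u\big(f(F)\mathcal{H}_\epsilon(G-\alpha)\big)-\mathcal{H}_\epsilon(G-\alpha)f'(F)\,D_uF,
\end{align*}
where $D_uF=\Pi_2$ by the chain rule again.

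\textbf{Step 3 (duality).} I apply the Poisson integration-by-parts formula $\mathbb{E}[D_u\Phi]=\mathbb{E}[\Phi\,\delta(u)]$. Here $\delta(u)=\int\!\!\int \operatorname{div}(\kappa u)/\kappa\,\tilde N(dz,ds)$ is the Bismut--Bichteler--Gaveau type divergence from Appendix A, and it is valid for $u\in\mathcal{V}^{\infty-}$ because $\kappa$ satisfies \eqref{eqt2}. This gives the numerator $\mathbb{E}[\mathcal{H}_\epsilon(G-\alpha)(f(F)\Pi_1\pm f'(F)\Pi_2)]$, with the sign fixed by the convention adopted in \eqref{eq7}. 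Taking $f\equiv1$ gives the denominator $\mathbb{E}[\mathcal{H}_\epsilon(G-\alpha)\delta(u)]$.

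\textbf{Step 4 (constant $c$).} The additive constant is harmless, since $\mathbb{E}[\delta(u)\,\cdot]$ pairs with constants via $\mathbb{E}[D_u(f(F))]=\mathbb{E}[f(F)\delta(u)]$. The $c$-contributions therefore cancel, and the expression is independent of $c$.

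The main obstacle is Step 3 together with the justification of the duality for the specific product $\Phi=f(F)\mathcal{H}_\epsilon(G-\alpha)$. This requires $\Phi\in W^{1,\vartheta,2}$, which follows from $F,G\in W^{1,\vartheta,2}$, $f,\mathcal{H}_\epsilon\in C^1_b$, and the integrability assumption on $f(F)u$. It also requires the boundary terms in the Poisson integration by parts to vanish, which is where $u\in\mathcal{V}^{\infty-}$ (vanishing near $\partial\Theta$) and the bound $|\nabla\log\kappa|\le\mathcal{C}\varrho$ are used.

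A secondary point is the conditioning in Step 1. One could instead assume $\int\!\!\int\langle u,D G\rangle N=1$ pathwise, but the weaker conditional form \eqref{condition1} suffices because the integrand multiplying it is $\sigma(F,G)$-measurable.

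Finally, I pass $\epsilon\to0$ by dominated convergence. This uses $\mathcal{H}_\epsilon\to\mathcal{H}$ boundedly, with $\Pi_1,\Pi_2\in\mathcal{L}^2$ by the bounds \eqref{yy-1} and \eqref{lambdanu} through the explicit form of $D_{\bf v}$.
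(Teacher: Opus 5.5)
Your proposal is essentially the paper's proof. The chain rule together with the conditional normalization \eqref{condition1} turns $\mathbb{E}[f(F)\,\mathcal{H}_\epsilon'(G-\alpha)]$ into $\mathbb{E}[f(F)\,D_u\mathcal{H}_\epsilon(G-\alpha)]$, followed by the product rule, the Poisson duality, the choice $f\equiv 1$ for the denominator, and dominated convergence as $\epsilon\to 0$; the paper reaches its piecewise-linear $\mathcal{H}_\epsilon$ through an extra smoothing index $n$ instead of your single $C^1_b$ mollifier, which is immaterial.

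One correction is needed. With $\delta(u)$ defined as in \eqref{weights}, the duality of Theorem~\ref{IBF} reads $\mathbb{E}[D_u\Phi]=-\mathbb{E}[\Phi\,\delta(u)]$, not $+$. So the $+$ sign in front of $f'(F)\Pi_2$ in \eqref{eq7} is forced rather than a matter of convention, and your Step 4 cancellation of the $c$-terms likewise relies on $\mathbb{E}[D_u f(F)]=-\mathbb{E}[f(F)\,\delta(u)]$.
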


\begin{proof}  
\noindent The proof is inspired by Theorem 3.1 in \citep{ref-daveloose2019}, in accordance with our approach of the Malliavin derivative in the context of Poisson processes.\\
Consider the definition of $ \Phi_{\epsilon, n}$, $ \Psi_{\epsilon, n}$, and $\mathcal{H}_{\epsilon}$ defined in the Appendix A. 
By applying the relation \eqref{condition1}, in combination with the chain rule mentioned in Appendix A, we arrive at the following expression.
\begin{align*}
\mathbb{E}\Big[f(F) \Phi_{\epsilon, n}(G - \alpha)\big]&= \mathbb{E}\Big[f(F) \Phi_{\epsilon, n}(G - \alpha) \mathbb{E}\bigg[\int_{0}^{T} \int_{\mathbb{R}_0}^{} \langle u(t,z), D_{t,z}G \rangle_{\mathbb{R}_0}  \, {N}(dz, dt) \bigg| \sigma(F, G) \bigg]\bigg]\notag\\
&= \mathbb{E}\bigg[f(F) \int_{0}^{T} \int_{\Theta_0}^{} \Phi_{\epsilon, n}(G - \alpha) \langle u(t,z), D_{t,z}G \rangle_{\mathbb{R}_0} \, {N}(dz, dt) \bigg]\notag\\
&={\mathbb{E}\Big[f(F){D_{u}\Psi_{\epsilon, n}(G-\alpha)}\Big]}.
\end{align*}
\noindent Moreover, since \(|\Psi_{\epsilon,n}| \leq \epsilon + \epsilon c\), and \(u \in \mathcal{V}^{\infty -}\), we proceed by taking the limit and employing the duality formula (Theorem \ref{IBF}) to obtain
\begin{align*}
\lim_{n \rightarrow \infty }\mathbb{E}\Big[{f(F)\Phi_{\epsilon, n}(G-\alpha)}\Big]&=
{\mathbb{E}\Big[{D_{u}(\mathcal{H}_{\epsilon}(G- \alpha)f(F))-\mathcal{H}_{\epsilon}(G- \alpha)D_{u}(f(F)) }\Big]}\notag\\
&=- {\mathbb{E}\Big[ {\mathcal{H}_{\epsilon}(G- \alpha)\Big(f(F)\delta(u)+D_{u}(f(F))} \Big)\Big]}. \notag
\end{align*}
\noindent Applying the foregoing result in the particular case where \( f \equiv 1 \), we then establish that 
\begin{align*}
&\mathbb{E}\big[{f(F)|G= \alpha}\big]= \lim_{\epsilon \to 0^+} \lim_{n \rightarrow \infty }\frac{\mathbb{E}\Big[{f(F)\Phi_{\epsilon, n}(G-\alpha)}\Big]}{\mathbb{E}\Big[{\Phi_{\epsilon, n}(G-\alpha)}\Big]}\notag\\
&=\frac{\mathbb{E}\Big[{-\frac{1}{\epsilon} \mathcal{H}_{\epsilon}(G-\alpha)\Big( f(F)\Pi_1 + f^{\prime}(F)\displaystyle \int_{0}^{T}{\int_{\mathbb{R}_0}^{} \langle u({t,z}), D_{t,z}F \rangle_{\mathbb{R}^d}{N}(dt, dz)}\Big)}\Big]}{\mathbb{E}\Big[ {- \frac{1}{\epsilon} \mathcal{H}_{\epsilon}(G -\alpha) \delta(u)} \Big]}.
\end{align*}
\noindent Noting that \(\left| \frac{1}{\epsilon} \mathcal{H}_{\epsilon}(G-\alpha) \right| \leq 1+c\), together with \(\mathbb{E}\left[|\delta(f(F)\cdot u)|\right] < \infty\), \(\delta(u) \in \mathcal{L}^2(\Omega)\), and the convergence \(\frac{1}{\epsilon}\mathcal{H}_{\epsilon}(x) \to \mathcal{H}(x)\) as \(\epsilon \to 0\), we complete the proof via the dominated convergence theorem. 
\end{proof}


\begin{theorem}\label{th2}
Let \( F \) and \( G \) be in \( W^{1, \vartheta, 2} \) and let \( f \in C'_b \) such that \( f(F) \in \mathcal{L}^2(\Omega) \). Consider a process \(u(.,.) \in \mathcal{V}^{\infty -}\), ensuring \( f(F) u(.,.)
 \in \mathcal{L}^2(\Omega \times [0,T] \times \Theta_0)\), satisfying \eqref{condition1} and, in addition,
\begin{equation}\label{condition2}
\mathbb{E}\left[\int_{0}^{T}\int_{\Theta_0}^{} \langle u(t,z), D_{t, z} F \rangle_{\mathbb{R}^d} \, {N}(dt, dz) \bigg| \sigma(F, G)\right] = 0.
\end{equation}
Then the following representation holds for \( \alpha \in \mathbb{R} \),
\begin{equation}\label{eqt9}
\mathbb{E}[f(F) | G = \alpha] = \frac{\mathbb{E}[ \mathcal{H}(G - \alpha) f(F)\delta(u)]}{\mathbb{E}[\mathcal{H}(G - \alpha) \delta(u)]}.
\end{equation}
\end{theorem}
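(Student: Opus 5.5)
The plan is to deduce Theorem \ref{th2} directly from Theorem \ref{th1}: show that under the extra condition \eqref{condition2} the term carrying $f'(F)\Pi_2$ in the numerator of \eqref{eq7} vanishes. The hypotheses of Theorem \ref{th1} all hold here. Indeed, $F,G\in W^{1,\vartheta,2}$, $f\in C'_b$, $u\in\mathcal{V}^{\infty-}$ with $f(F)u\in\mathcal{L}^2(\Omega\times[0,T]\times\Theta_0)$, and \eqref{condition1} is assumed. So for every $\alpha\in\mathbb{R}$,
\begin{equation*}
\mathbb{E}[f(F)\mid G=\alpha]=\frac{\mathbb{E}\big[\mathcal{H}(G-\alpha)f(F)\delta(u)\big]+\mathbb{E}\big[\mathcal{H}(G-\alpha)f'(F)\Pi_2\big]}{\mathbb{E}\big[\mathcal{H}(G-\alpha)\delta(u)\big]},
\end{equation*}
with $\Pi_1=\delta(u)$ and $\Pi_2=\int_0^T\int_{\Theta_0}\langle u(t,z),D_{t,z}F\rangle_{\mathbb{R}^d}N(dz,dt)$. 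It therefore suffices to prove that $\mathbb{E}[\mathcal{H}(G-\alpha)f'(F)\Pi_2]=0$.

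For this I will condition on $\sigma(F,G)$. The random variable $\mathcal{H}(G-\alpha)f'(F)=(\mathbf{1}_{\{G\ge\alpha\}}+c)f'(F)$ is $\sigma(F,G)$-measurable and bounded by $(1+|c|)\|f'\|_\infty$, since $f\in C'_b$. Once $\Pi_2\in\mathcal{L}^1(\Omega)$ is known, the tower property and "taking out what is known" give
\begin{equation*}
\mathbb{E}\big[\mathcal{H}(G-\alpha)f'(F)\Pi_2\big]=\mathbb{E}\Big[\mathcal{H}(G-\alpha)f'(F)\,\mathbb{E}\big[\Pi_2\mid\sigma(F,G)\big]\Big]=0,
\end{equation*}
where the last equality is exactly \eqref{condition2}. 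Substituting into the display above yields \eqref{eqt9}.

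The only point needing care is the integrability of $\Pi_2$, which both conditional expectations in \eqref{condition1}--\eqref{condition2} and the use of Theorem \ref{th1} implicitly require. I would obtain it from $F\in W^{1,\vartheta,2}$ and $u\in\mathcal{V}^{\infty-}$: the directional derivative $D_uF=\Pi_2$ lies in $\mathcal{L}^2(\Omega)$ by the definition of the Sobolev space in Appendix A, or via a Hölder estimate as in the proof of the theorem on $D_\vartheta X_t$. Boundedness of $\mathcal{H}(G-\alpha)f'(F)$ then makes the product integrable.

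A minor check is that the denominator $\mathbb{E}[\mathcal{H}(G-\alpha)\delta(u)]$ is the same finite, nonzero quantity as in Theorem \ref{th1}; this is inherited from that theorem. I expect no real obstacle beyond confirming this integrability, since the argument is a one-line corollary.
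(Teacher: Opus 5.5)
Your argument is correct and matches the first step of the paper's proof: apply Theorem \ref{th1}, then make the $f'(F)\Pi_2$ term vanish by conditioning on $\sigma(F,G)$, pulling out the bounded $\sigma(F,G)$-measurable factor $\mathcal{H}(G-\alpha)f'(F)$, and invoking \eqref{condition2}. The paper goes one step further with a density argument: it approximates $f$ by $f_n\in C_K^\infty$ with $f_n(F)\to f(F)$ in $\mathcal{L}^2(\Omega)$ and controls the numerator by Cauchy--Schwarz, which extends \eqref{eqt9} to merely Borel $f$ with $f(F)\in\mathcal{L}^2(\Omega)$; this is more than the statement requires as written with $f\in C'_b$, but it is the version Proposition \ref{th5} actually relies on.
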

\noindent
\begin{proof}
\noindent The approach adopted in this proof closely resembles the one utilized in the proof of Theorem 3.2 as presented in \citep{ref-daveloose2019}. For an in-depth exploration, please refer to \ref{appendixB}.
\end{proof}


\subsection{Mean-field SDEs with jump model}
\noindent In this subsection, we extend their methodology to the computation of conditional expectations within the framework of mean‑field stochastic differential equations (SDEs) with jumps in a Poisson space, employing Malliavin derivatives together with the dual formula established for Poisson space. \\
\noindent We will proceed to compute the Malliavin weights outlined in \eqref{weights} for mean-field SDEs with jumps, employing the Skorohod integral and duality formula.
\begin{proposition}\label{th3}
Let \( f \in C_b^\prime \); for \( 0 < s < t < T \), and \( \alpha \in \mathbb{R} \). In the setting described by the mean-field stochastic differential equation with jumps \eqref{eqt3}, we assume that for every $p \geq 2$
\begin{align}\label{eqt11}
&\mathbb{E}\Bigg[ \int_{0}^{T} \int_{\Theta_0} {(1 \wedge |z|^2)^p} \left( \frac{1+\vert \partial_z(log \partial_z \lambda) \vert ^{p}+\vert 1+M_{r,z}\vert^p\varrho(z)^p+\vert\partial_z M_{r,z}\vert^p}{\vert \partial_z \lambda(r, X_r^x, z, \eta_{X_r^x})\vert^{p}} \right)  \, \nu(dz) dr\Bigg] < \infty, 
\end{align}
\noindent Then the following representation holds for the conditional expectation:
\begin{equation*}
\mathbb{E} \left[ f(X_t ^x) \mid X_s ^x = \alpha \right] =
\frac{\mathbb{E} \left[ f(X_t ^x) \mathcal{H}(X_s ^x - \alpha) \Pi_1^\alpha  +  f'(X_t ^x) \mathcal{H}(X_s ^x - \alpha) \Pi_2^\alpha \right]}
{\mathbb{E} \left[ \mathcal{H}(X_s ^x - \alpha) \Pi_1 \right]},
\end{equation*}
\noindent where the Malliavin weights equal

\begin{align}\label{pi1}
\Pi_1^\alpha=  \frac{1}{Y_s^x}\int_0^s \int_{\Theta_0} {Y_r^x}\mathcal{J}(r,z)\tilde N(dz, \delta r),\quad
\Pi_2^\alpha =\frac{Y_{t}^{x}}{Y_{s}^{x}}\int_{0}^{s} \int_{\Theta_0}^{} (1 \wedge |z|^2) {N}(dz, dr),\hspace{70mm}
\end{align}
\noindent in which 
\begin{align*}
\mathcal{J}(r,z)& :=\frac{(1 \wedge |z|^2) }{\partial_z \lambda(r, X_r^x, z, \eta_{X_{r} ^x})}\Big[\Big(\frac{\partial_z \kappa(z) }{\kappa(z)} - \frac{\partial_z ^2 \lambda(r, X_r^x, z, \eta_{X_{r} ^x}) }{\partial_z \lambda(r, X_r^x, z, \eta_{X_{r} ^x})} \Big)+ {\frac{2|z|1_{z\leq 1}}{\partial_z \lambda(r, X_r^x, z, \eta_{X_{r} ^x})}}\Big](1+M_{r,z})\\
&+ \frac{(1 \wedge |z|^2) \partial_z M_{r,z}}{\partial_z \lambda(r, X_r^x, z, \eta_{X_{r} ^x})}.
\end{align*}
\end{proposition}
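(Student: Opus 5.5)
The plan is to obtain the statement as a special case of Theorem \ref{th1} with $F=X_t^x$ and $G=X_s^x$. The work reduces to choosing a direction $u\in\mathcal{V}^{\infty-}$ that satisfies the normalization \eqref{condition1}. We then identify $\delta(u)$ with $\Pi_1^\alpha$ and $\int\!\!\int\langle u,D_{r,z}X_t^x\rangle N(dz,dr)$ with $\Pi_2^\alpha$.

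\textbf{Choice of $u$.} Guided by the directional derivative formula for $D_{\bf v}X_t^x$ stated before the first theorem, take
\[
u(r,z):=\mathbf{1}_{\{r\le s\}}\frac{(1\wedge|z|^2)\,Y_r^x\,(I+M_{r,z}^x)}{Y_s^x\,\partial_z\lambda(r,X_r^x,z,\eta_{X_r^x})\,\int_0^s\int_{\Theta_0}(1\wedge|z'|^2)N(dz',dr')}.
\]
The factor $(1\wedge|z|^2)$ makes $u$ vanish on $\partial\Theta$ and guarantees the integrability needed in the Poisson duality formula. One might prefer to avoid the random normalizer by dividing instead by its expectation, but then \eqref{condition1} holds only in conditional mean. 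I would keep the pathwise normalization, since it makes \eqref{condition1} an identity.

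\textbf{Computing the two integrals.} Using $D_{r,z}X_s^x=Y_s^x(Y_r^x)^{-1}\partial_z\lambda(\cdot)(I+M_{r,z}^x)^{-1}\mathbf{1}_{r\le s}$, the integrand $\langle u,D_{r,z}X_s^x\rangle$ collapses to $(1\wedge|z|^2)$ divided by the total mass. Hence $D_uX_s^x=1$ pathwise, so \eqref{condition1} holds trivially. For $F=X_t^x$ with $t>s$, the same formula gives $D_{r,z}X_t^x=Y_t^x(Y_r^x)^{-1}\partial_z\lambda(I+M)^{-1}$, and
\[
D_uX_t^x=\frac{Y_t^x}{Y_s^x}\times(\text{normalized mass}),
\]
which is $\Pi_2^\alpha$ as written, modulo the normalization convention. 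The weight $\Pi_1=\delta(u)$ follows from the divergence formula $\delta(u)=\int\!\!\int \frac{\operatorname{div}(\kappa u)}{\kappa}\tilde N$ of \eqref{weights}. Expanding $\partial_z(\kappa u)/\kappa$ by the product rule gives four terms:
\begin{itemize}
\item $\partial_z\kappa/\kappa$;
\item $-\partial_z^2\lambda/\partial_z\lambda$, from differentiating $1/\partial_z\lambda$;
\item $2|z|\mathbf{1}_{|z|\le1}$, from $\partial_z(1\wedge|z|^2)$;
\item $\partial_zM_{r,z}$, from differentiating $(I+M_{r,z})$.
\end{itemize}
These are exactly the terms of $\mathcal{J}(r,z)$. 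The factors $Y_r^x$ and $Y_s^{-1}$ do not depend on $z$ and pass through. The resulting integral is anticipating, hence the notation $\tilde N(dz,\delta r)$.

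\textbf{Verifying the hypotheses.} Condition \eqref{eqt11}, combined with the moment bounds \eqref{yy-1} on $Y$ and $Y^{-1}$, the bound $|\nabla\log\kappa|\le\mathcal{C}\varrho$ from \eqref{eqt2}, and Hölder's inequality, gives $u\in\mathcal{V}^{\infty-}$, $\delta(u)\in\mathcal{L}^2$, and $f(X_t^x)u\in\mathcal{L}^2$. The theorem before this section gives $X_s^x,X_t^x\in W^{1,\vartheta,2}$. Theorem \ref{th1} then yields the formula.

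\textbf{Main obstacle.} The hard step is the divergence computation when $u$ carries the random normalizer and the anticipating factor $Y_s^{-1}$. Strictly, $\delta(Fu)=F\delta(u)-D_uF$, so correction terms arise from differentiating the normalizer and $Y_s^{-1}$. I would first try to show that these corrections are absorbed, or cancel against the $\Pi_2$ term. Failing that, I would fix the normalizer outside the integral, as the displayed $\Pi_2^\alpha$ suggests the authors do, and accept the statement's weights as the resulting representation.
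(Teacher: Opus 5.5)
Your proposal has a genuine gap: neither of its two branches produces the stated weights while also meeting the hypotheses of Theorem~\ref{th1}.

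In your main branch, let $S:=\int_0^s\int_{\Theta_0}(1\wedge|z'|^2)N(dz',dr')$ be your random normalizer. Your own computation then gives $\langle u,D_{r,z}X_t^x\rangle=\frac{Y_t^x}{Y_s^x}\frac{1\wedge|z|^2}{S}\mathbf{1}_{\{r\le s\}}$, hence $D_uX_t^x=Y_t^x/Y_s^x=\Pi_2^\alpha/S$. This is not a deterministic multiple of $\Pi_2^\alpha$, so ``modulo the normalization convention'' is wrong: the discrepancy is the random variable $S$, and it cannot cancel against the denominator.

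The divergence has the same problem. Write $u=S^{-1}w$ and use the paper's convention $\mathbb{E}[D_\vartheta F]=-\mathbb{E}[F\delta(\vartheta)]$. Then $\delta(u)=S^{-1}\delta(w)-S^{-2}\int_0^s\int_{\Theta_0}\langle w,\nabla_z(1\wedge|z|^2)\rangle N(dz,dr)$, plus a further term coming from the anticipating factor $(Y_s^x)^{-1}$. These terms multiply $f(F)$ and have nothing to cancel against in the $f'(F)$ term.

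Integrability also fails. Whenever $\nu(\Theta_0)<\infty$, as in all of the paper's examples, $S=0$ with probability $e^{-s\nu(\Theta_0)}>0$. On that event $u$ is undefined, so $u\notin\mathcal{V}^{\infty-}$. Moreover $D_vX_s^x=0$ there for every direction $v$, so no choice of $u$ can make \eqref{condition1} hold pathwise. At best this branch proves a different representation from the one stated.

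Your fallback is exactly the paper's proof. The paper takes the deterministic normalizer $\mathfrak{A}s=\mathbb{E}[S]$, with $\mathfrak{A}=\int_{\Theta_0}(1\wedge|z|^2)\kappa(z)\,dz$. It checks $u\in\mathcal{V}^{\infty-}$ by bounding $\nabla_zu$ and $u\varrho$ in $\mathcal{L}^2_p$ via \eqref{eqt11} and \eqref{yy-1}. It then pulls $(Y_s^x)^{-1}$ out of the divergence; the $\tilde N(dz,\delta r)$ notation records this. In doing so it calls $u$ predictable, which it is not for $r<s$, even though $\mathcal{V}^p$ is defined as a space of predictable processes. This yields $\delta(u)=\Pi_1^\alpha/(\mathfrak{A}s)$ and $\Pi_2=\Pi_2^\alpha/(\mathfrak{A}s)$, and the constant cancels. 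For that cancellation the statement's denominator should carry $\Pi_1^\alpha$ rather than $\Pi_1$.

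With this $u$, however, $D_uX_s^x=S/(\mathfrak{A}s)$, so \eqref{condition1} becomes $\mathbb{E}[S\mid\sigma(X_t^x,X_s^x)]=\mathfrak{A}s$. The paper calls this obvious. You rightly observe that it holds only unconditionally, and it fails in general because $X_s^x$ is driven by the very jumps that make up $S$. The anticipating correction you flag is likewise simply omitted.

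So ``accepting the statement's weights'' is not a proof step. You correctly located the two weak points but did not close either. To finish, you must either justify \eqref{condition1} for the deterministic normalizer, or carry the correction terms through and state the weights that actually result.
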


\begin{proof}
\noindent We will apply Theorem \ref{th1} for 
\begin{equation*}
	u(r,z) = \frac{Y_r ^x (I+M_{r,z}^x)(1 \wedge |z|^2) }{\mathfrak{A} Y_s ^x \partial_z \lambda(r, X_r ^x , z, \eta_{X_r ^x}) } \frac{1}{s} \mathbf{1}_{r\le s},
\end{equation*}
where $\mathfrak{A} =  \int_{\Theta_0} (1 \wedge |z|^2) \kappa(z) dz $. It is obvious that the condition \eqref{condition1} is fulfilled. To demonstrate that the process \( u(r,z) \)  is an element of the space \( \mathcal{V}^{\infty-} \),  we will establish two key criteria: the predictability of \( u(r,z) \) and the boundedness of its norm.\\
\noindent $\bullet$The components \( Y_r^x \), \( Y_s^x \), and \( X_r^x \) are adapted to the filtration \( \mathcal{F}_t \). The term \( \partial_z \lambda(r, X_r^x, z, \eta_{X_r}) \) is both deterministic and smooth, in accordance with Assumption \eqref{Assum1}. Furthermore, the indicator function \( \mathbf{1}_{\{r \leq s\}} \) possesses predictability. Consequently, \( u(r,z) \) qualifies as a predictable process.\\
\noindent $\bullet$
The expression of the gradient function associated with the function \(u\) is defined by
\begin{align*}
\nabla_z u = \frac{Y_r^x(I+M_{r,z}^x)}{\mathfrak{A}sY_s^x} \mathbf{1}_{\{r \leq s\}} \left[ \frac{2z1_{\vert z\vert \leq 1}}{\partial_z \lambda} - \frac{(1 \wedge |z|^2) \nabla_z (\partial_z \lambda)}{(\partial_z \lambda)^2} \right] + u(r,z) \frac{\partial_z M_{r,z}}{I+M_{r,z}^x} .  
\end{align*}  
\noindent In accordance with the assumption \eqref{eqt11}, it follows that $\mathbb{E}[ \int_{0}^{T} \int_{|z| \leq 1} \vert \frac{ z } {\partial_z \lambda(r, X_r^x, z, \eta_{X_{r} ^x})}\vert^p  \, \nu(dz) dr] < \infty$.
Therefore, from \eqref{yy-1}, the first part of Assumption \ref{Assum2}, Burkholder-Davis-Gundy, and then Hölder inequalities, we derive that there exists  some constant $\mathcal{C}$  such that 
\begin{align*}
\| \nabla_z u\|_{\mathcal{L}_p^2}^p &= \mathbb{E}\left[\int_0^1 \int_{\Theta_0} |\nabla_z u|^2 \nu(dz) dr\right]^{p/2} + \mathbb{E}\left[\int_0^1 \int_{\Theta_0} |\nabla_z u|^p \nu(dz) dr\right]\\
		&\leq \mathcal{C} \mathbb{E}\Big[ \sup_{r \leq s} \left| {Y_r^x}{(Y_s^x)^{-1}} \right|^p   \int_0^1 \int_{\Theta_0} \vert \frac{z} {\partial_z \lambda(r, X_r^x, z, \eta_{X_{r} ^x})}\vert^p  1_{|z| \leq 1} \, \nu(dz) dr  \Big] \\
		&\quad + \mathcal{C} \mathbb{E}\Big[ \sup_{r \leq s} \left| {Y_r^x}{(Y_s^x)^{-1}} \right|^p   \int_0^1 \int_{\Theta_0} \vert\frac{\partial_z (log \partial_z \lambda)} {\partial_z \lambda(r, X_r^x, z, \eta_{X_{r} ^x})}\vert^p ( 1 \wedge |z|^2)^p \, \nu(dz) dr  \Big] \\
&\quad + \mathcal{C} \mathbb{E}\Big[ \sup_{r \leq s} \left| {Y_r^x}{(Y_s^x)^{-1}} \right|^p   \int_0^1 \int_{\Theta_0} \vert \frac{(\partial_z M_{r,z})^p} {\partial_z \lambda(r, X_r^x, z, \eta_{X_{r} ^x})}\vert^p ( 1 \wedge |z|^2)^p\, \nu(dz) dr  \Big] < \infty,			
\end{align*} 
and
\begin{align*}
\begin{aligned}
\|u \varrho\|_{\mathcal{L}_p^2}^p &= \mathbb{E}\left[\int_0^1 \int_{\Theta_0} |u \varrho|^2 \nu(dz) dr\right]^{p/2}+\mathbb{E}\left[\int_0^1 \int_{\Theta_0} |u \varrho|^p \nu(dz) dr\right]\\
&\leq \mathcal{C} \mathbb{E}\Big[ \sup_{r \leq s} \left| {Y_r^x}{(Y_s^x)^{-1}} \right|^{2p} \Big]^\frac12  \mathbb{E}\Big[\int_0^1 \int_{\Theta_0} \vert\frac{1}{\partial_z \lambda(r, X_r^x, z, \eta_{X_{r} ^x})}\vert^{2p} (1 \wedge |z|^2)^{2p}\varrho(z)^{2p}\, \nu(dz) dr  \Big]^\frac12 \\
& + \mathcal{C} \mathbb{E}\Big[ \sup_{r \leq s} \left| {Y_r^x}{(Y_s^x)^{-1}} \right|^{2p} \Big]^\frac12  \mathbb{E}\Big[\int_0^1 \int_{\Theta_0} \vert\frac{(I+M_{r,z})}{\partial_z \lambda(r, X_r^x, z, \eta_{X_{r} ^x})}\vert^{2p} (1 \wedge |z|^2)^{2p}\varrho(z)^{2p} \, \nu(dz) dr  \Big]^\frac12 < \infty.\\
\end{aligned}
\end{align*}  
\noindent We conclude that both the integrals and the expectations are finite, leading us to the result \(\|u\|_{\mathcal{V}^p} < \infty\) for all \(p \geq 1\), and
\begin{align*}
u \in \bigcap_{p \geq 1} \mathcal{V}^p = \mathcal{V}^{\infty-}.
\end{align*}

\noindent Finally, we compute th initial weight in \eqref{pi1} corresponds to the Skorohod integral of $u$.  
\begin{align*}
\Pi_1= \delta(u) &= \int_{0}^{T} \int_0^t \frac{\partial_z (\kappa u(r, z))}{\kappa(z)} \, \tilde{N}(dz, \delta r) \\
&= \int_{0}^{t} \int_{\Theta_0} \frac{\partial_z \kappa(z) u(r, z) + \kappa(z) \, \partial_z u(r, z)}{\kappa(z)} \, \tilde{N}(dz, \delta r)\notag\\
&= \frac{1}{\mathfrak{A}sY_s^x}\int_0^s \int_{\Theta_0}  {Y_r^x}{\mathcal{J}(r,z)}\tilde N(dz, \delta r)= \frac{1}{\mathfrak{A}s}\Pi_1^\alpha.
\end{align*}
\noindent Thus, the relation given in \eqref{pi1} is established.
\noindent The second weighting term in \eqref{weights} is expressed as
\begin{align*}
\Pi_2&= \displaystyle \int_{0}^{T}{\int_{\Theta_0}^{}\langle u(r,z), D_{r,z}X_t ^x \rangle_{\mathbb{R}^d}{N}(dz, dr)} \notag\\
&= \int_{0}^{T}{\int_{\mathbb{R}_0}^{}{\frac{Y_r ^x (1 \wedge |z|^2)}{\mathfrak{A}sY_s ^x \partial_z \lambda(r, X_r ^x , z, \eta_{X_r ^x})}\mathbf{1}_{r \le s}}{Y_t ^x (Y_r ^x)^{-1} \partial_z \lambda(r, X_r ^x , z, \eta_{X_r ^x}) \mathbf{1}_{r \le t}}{ N}(dz, dr)}\notag\\
& = \frac{1}{\mathfrak{A}s} \int_{0}^{s}{\int_{\Theta_0}^{}{\frac{Y_t ^x (1 \wedge |z|^2)}{ Y_s ^x}}{N}(dz, dr)}=\frac{1}{\mathfrak{A}s}\Pi_2^\alpha.
\end{align*}
\end{proof}

\noindent Notably, Theorem \ref{th2} extends to this context, offering a practical tool for cases where $ f $ is non-differentiable.

\begin{proposition}\label{th5}
\noindent Let us revisit the model prescribed by the stochastic differential equation \eqref{eqt3}. For any Borel-measurable function $ f $ satisfying $ f(X_t) \in \mathcal{L}^2(\Omega) $, for $ 0 < s < t < T $, and $ \alpha \in \mathbb{R} $, the following holds under assumptions \eqref{eqt11},
\begin{align*}
\mathbb{E}\!\left[ f(X_t)\,|\,X_s = \alpha \right]
= \frac{\mathbb{E}\!\left[ f(X_t) \mathcal{H}(X_s - \alpha)\,\Pi \right]}{\mathbb{E}\!\left[ \mathcal{H}(X_s - \alpha)\,\Pi \right]},
\end{align*}
where the Malliavin weight $\Pi$, from that in \eqref{pi1}, defines as follows
\begin{align}\label{Pi}
\Pi= \frac{1}{s\mathfrak{A}}\Pi_1^\alpha - \Big[\frac{1}{(t-s)\mathfrak{A}Y_s^x}\int_s^t \int_{\Theta_0} Y_r^x\mathcal{J}(r,z)\tilde N(dz, \delta r)\Big].
\end{align}
\end{proposition}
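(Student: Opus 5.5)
We will apply Theorem \ref{th2} with $F = X_t^x$ and $G = X_s^x$, so the task is to build a process $u \in \mathcal{V}^{\infty-}$ that satisfies both the normalization \eqref{condition1} and the orthogonality condition \eqref{condition2}. We then identify $\delta(u)$ with the weight $\Pi$ in \eqref{Pi}. Following the strategy of Theorem 3.2 in \citep{ref-daveloose2019}, we take $u = u_1 - u_2$. Here $u_1$ is the process used in the proof of Proposition \ref{th3}:
\begin{equation*}
u_1(r,z) = \frac{Y_r^x (I+M_{r,z}^x)(1\wedge |z|^2)}{\mathfrak{A} s Y_s^x\, \partial_z\lambda(r,X_r^x,z,\eta_{X_r^x})}\mathbf{1}_{r\le s},
\end{equation*}
and $u_2$ is the analogous process supported on $(s,t]$:
\begin{equation*}
u_2(r,z) = \frac{Y_r^x (I+M_{r,z}^x)(1\wedge |z|^2)}{\mathfrak{A} (t-s) Y_s^x\, \partial_z\lambda(r,X_r^x,z,\eta_{X_r^x})}\mathbf{1}_{s< r\le t}.
\end{equation*}

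First we check the two conditions using the explicit derivative $D_{r,z}X_t^x = Y_t^x (Y_r^x)^{-1}\partial_z\lambda(r,X_r^x,z,\eta_{X_r^x})(I+M_{r,z})^{-1}\mathbf{1}_{r\le t}$, which follows from Lemma 3.3 of \cite{ref-song2022}. Since $D_{r,z}X_s^x=0$ for $r>s$, $u_2$ does not contribute to $D_uX_s^x$, and the computation of Proposition \ref{th3} gives $\int\!\!\int \langle u_1, D_{r,z}X_s\rangle N(dz,dr) = \frac{1}{\mathfrak{A}s}\int_0^s\int_{\Theta_0}(1\wedge|z|^2)N(dz,dr)$. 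Its conditional expectation is normalized to $1$ exactly as in Proposition \ref{th3}, so \eqref{condition1} holds.

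For $F = X_t^x$, both pieces contribute. The $u_1$-part gives $\frac{Y_t^x}{Y_s^x}\frac{1}{\mathfrak{A}s}\int_0^s\int(1\wedge|z|^2)N(dz,dr)$, and the $u_2$-part gives $\frac{Y_t^x}{Y_s^x}\frac{1}{\mathfrak{A}(t-s)}\int_s^t\int(1\wedge|z|^2)N(dz,dr)$. Conditionally on $\sigma(X_s,X_t)$, each normalized counting integral should have the same conditional mean, because both are averages per unit time of the same Poisson functional. The difference therefore has zero conditional expectation, which gives \eqref{condition2}.

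Membership $u\in\mathcal{V}^{\infty-}$ follows verbatim from the estimates in Proposition \ref{th3}. Those estimates use \eqref{eqt11}, \eqref{yy-1}, the bound $\sup_{r\le t}|Y_r^x(Y_s^x)^{-1}|$ (now taken over $r\le t$), and Hölder's inequality, applied separately on $[0,s]$ and $(s,t]$.

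Next we compute $\delta(u)=\delta(u_1)-\delta(u_2)$ through the divergence formula $\delta(u)=\int\!\!\int \frac{\operatorname{div}(\kappa u)}{\kappa}\tilde N(dz,\delta r)$. We already have $\delta(u_1)=\frac{1}{\mathfrak{A}s}\Pi_1^\alpha$. The same differentiation in $z$ applied to $u_2$, which involves the product rule on $(1\wedge|z|^2)$, on $1/\partial_z\lambda$ and on $(I+M_{r,z})$, produces $\frac{1}{(t-s)\mathfrak{A}Y_s^x}\int_s^t\int Y_r^x\mathcal{J}(r,z)\tilde N(dz,\delta r)$. On $(s,t]$ the factor $(Y_s^x)^{-1}$ is no longer predictable, so this integral must be read as a Skorohod integral, consistent with the notation $\delta r$ in \eqref{Pi}. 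This yields \eqref{Pi}. Theorem \ref{th2} then gives the formula for $f\in C_b'$.

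Finally, we extend the formula to Borel $f$ with $f(X_t)\in\mathcal{L}^2$. We approximate $f$ in $\mathcal{L}^2$ of the law of $X_t$ by functions in $C_b'$, and pass to the limit using $\Pi\in\mathcal{L}^2$, $|\mathcal{H}|\le 1+|c|$ and Cauchy--Schwarz.

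The main obstacle is verifying \eqref{condition2}. For Wiener weights the analogous step exploits an exact time-symmetry. Here one has to argue that, given $(X_s,X_t)$, the conditional expectations of $\frac1s\int_0^s\int(1\wedge|z|^2)N$ and $\frac1{t-s}\int_s^t\int(1\wedge|z|^2)N$ coincide. This is not automatic, since the jump counts are correlated with $X_s$ and $X_t$. If it fails as stated, the fallback is to modify $u_2$ by the random normalization
\begin{equation*}
\frac{\int_0^s\int(1\wedge|z|^2)N(dz,dr)}{\int_s^t\int(1\wedge|z|^2)N(dz,dr)},
\end{equation*}
so that the cancellation is pathwise. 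One would then recompute $\delta(u_2)$ with the Poisson product rule, accepting the extra correction term this produces.
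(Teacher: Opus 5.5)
Your $u=u_1-u_2$ is exactly the paper's $\hat u$ in \eqref{uhat}. The paper's proof simply invokes Theorem \ref{th2} without checking \eqref{condition2}, and Proposition \ref{th3} calls \eqref{condition1} ``obvious''. So you follow the paper's route, but the step you single out is a genuine gap, and your heuristic for it is false.

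The directional derivative integrates against the random measure $N$, not against $\nu(dz)\,dr$. Put $S_1:=\int_0^s\int_{\Theta_0}(1\wedge|z|^2)N(dz,dr)$ and $S_2:=\int_s^t\int_{\Theta_0}(1\wedge|z|^2)N(dz,dr)$. Then $D_uX_s^x=S_1/(\mathfrak{A}s)$ and $D_uX_t^x=\frac{Y_t^x}{Y_s^x}\bigl(\frac{S_1}{\mathfrak{A}s}-\frac{S_2}{\mathfrak{A}(t-s)}\bigr)$.

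In the Wiener setting of \citep{ref-daveloose2019} the corresponding quantities equal $1$ and $0$ pathwise, which is why both conditions are trivial there. Here they equal $1$ and $0$ only in unconditional mean. Given $(X_s,X_t)$, the jumps on $[0,s]$ are informed by $X_s-x$ and those on $(s,t]$ by $X_t-X_s$. Moreover, $Y_t^x/Y_s^x$ cannot be taken out of $\mathbb{E}[\,\cdot\mid X_s,X_t]$.

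The same objection applies to \eqref{condition1} as written, which you took over from Proposition \ref{th3}. For deterministic coefficients that failure is essentially harmless: by the Markov property, $\mathbb{E}[S_1\mid X_s,X_t]=\mathbb{E}[S_1\mid X_s]$, and this factor cancels between numerator and denominator. The failure of \eqref{condition2} is not harmless.

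You also have the predictability backwards. For $r\le s$ the factor $(Y_s^x)^{-1}$ is not $\mathcal{F}_{r-}$-measurable, whereas for $r>s$ it is. So $u_1$ is the anticipating piece and is not in $\mathcal{V}^{\infty-}$, whose elements are predictable. It is $\delta(u_1)$ that picks up the correction $D_{Y_s^xu_1}\bigl((Y_s^x)^{-1}\bigr)$.

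The obstruction is real. Take $b=0$, $\sigma\equiv1$, $\lambda(t,x,z,\eta)=z$ and $\kappa(z)=e^{-\sqrt{1+z^2}}$ on $\Theta_0=\mathbb{R}\setminus\{0\}$. Then $Y\equiv1$, $M=0$ and $u$ is deterministic. Let $\mu(dz):=(1\wedge|z|^2)\nu(dz)$ and let $p$ be the density of $X_s$. The Mecke formula gives $\mathbb{E}[S_1e^{i\theta X_s}]=s\,\widehat\mu(\theta)\,\mathbb{E}[e^{i\theta X_s}]$, while $\mathbb{E}[S_2e^{i\theta X_s}]=(t-s)\mathfrak{A}\,\mathbb{E}[e^{i\theta X_s}]$. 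Since $\mu$ has no atom at $0$, $\widehat\mu\not\equiv\mathfrak{A}$, so \eqref{condition2} fails.

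Worse, the formula itself fails. Run the duality as in Theorem \ref{th1} with $\Pi=\delta(u)$ and $f(y)=y$; the result holds for any $c$. The right-hand side of the proposition equals
\[
\alpha+\frac{\int_{\Theta_0}\bigl(\mathcal{P}(X_s\ge\alpha-z)-\mathcal{P}(X_s\ge\alpha)\bigr)\mu(dz)}{(\mu*p)(\alpha)},
\]
whereas $\mathbb{E}[X_t\mid X_s=\alpha]=\alpha$. The numerator has $\alpha$-derivative $\mathfrak{A}p-\mu*p$. Its Fourier transform $\widehat p\,(\mathfrak{A}-\widehat\mu)$ is not identically zero, because $\widehat p$ is the characteristic function of an infinitely divisible law and never vanishes. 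So the $f'$-term of \eqref{eq7} does not drop out, and no argument can justify the weight \eqref{Pi}.

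Your fallback, normalizing $u_2$ by $S_1/S_2$, goes in the right direction: it gives $D_uX_t^x=0$ pathwise on $\{S_2>0\}$. But $S_2^{-1}$ is anticipating on $(s,t]$. The weight therefore acquires terms $D_w(S_2^{-1})$ and requires negative moments of $S_2$, so what you prove is a different formula, not \eqref{Pi}. When $\nu$ is finite, as in the paper's examples, $\{S_2=0\}$ has positive probability. On that event every process supported on $(s,t]$ contributes nothing to $D_uX_t^x$, so the cancellation is impossible there.
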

\begin{proof}
Theorem \ref{th2} lets us do it for the process
\begin{align}\label{uhat}
\hat{u}(r,z)
&= \frac{Y_r^{x}\, (1 \wedge |z|^{2})(I+M_{r,z})}
        {Y_s^{x}\, \partial_{z}\lambda\!\left(r, X_r^{x}, z, \eta_{X_r^{x}}\right)\, \mathfrak{A}}
   \left[ \frac{1}{s}\,\mathbf{1}_{\{r \le s\}}
          - \frac{1}{t-s}\,\mathbf{1}_{\{s < r \le t\}} \right] \notag\\
&= {u}(r,z)
   - \frac{Y_r^{x}\, (1 \wedge |z|^{2})(I+M_{r,z})}
          {Y_s^{x}\, \partial_{z}\lambda\!\left(r, X_r^{x}, z, \eta_{X_r^{x}}\right)\, \mathfrak{A}}
     \cdot \frac{1}{t-s}\,\mathbf{1}_{\{s < r \le t\}} .
\end{align}
\noindent Upon comparison with the intermediate process invoked in the proof of Proposition \ref{th3}, we conclude that $\hat{u}$ resides in the domain of the divergence operator, $\mathcal{V}^{\infty-}$. It follows that $\Pi$ is given by the expression below,
\begin{align*}
\delta(\hat{u})&= \delta(u) - \frac{1}{(t-s)\mathfrak{A}Y_s^x}\int_s^t \int_{\mathbb{R}_0} Y_r^x\mathcal{J}(r,z)\tilde N(\delta r, dz).
\end{align*}
\noindent This concludes the proof.\\
\end{proof}

\noindent To enhance understanding of the relationships and Malliavin weights thus obtained, the following example is presented.\\
\begin{example}
\noindent
Consider the following mean-field stochastic differential equation with jumps, initialized at $X_0^x = x_0$
\begin{align*}
dX_s^x &= a\big(\mathbb{E}[X_s^x] + X_s^x\big)\,ds + b X_s^x\,dW_s + \int_{\mathbb{R}_0} (\lambda_0(r, \eta_{X_r^x})\, z+\lambda X_r^x)\tilde{N}(dz, dr),
\end{align*}
where $a, \lambda$ and $b$ are constants and $\lambda_0$ is a suitable function depending on $t$, $X_t^x$ and its distribution $\eta_{X_t^x}$, whose inverse is in $\mathcal{L}^p[0,1]$, for all $p \geq 2$.
\noindent Suppose, furthermore, that $\kappa(z) = 1$ on $\Theta=[-\frac12, \frac12]$. Then $\varrho(z)=1_{|z| <\frac14} \frac{1}{|z|}+ 1_{\frac14 \leq |z| \leq \frac12}\frac{1}{\frac12 -|z|}$, and for every $p \geq 2$, the condition \eqref{eqt11} equals to 
$$\int_0^1 \frac{ 1}{\lambda_0^p(r, \eta_{X_r^x})} dr\int_{-\frac12}^{\frac12} |z|^{2p} (1+ 1_{|z| <\frac14} \frac{1}{|z|}+ 1_{\frac14 \leq |z| \leq \frac12}\frac{1}{\frac12 -|z|})^p dz< \infty.$$
Then the expression for the corresponding Malliavin-type weight $\Pi$ takes the form
\begin{align*}
\Pi =\, & \frac{1}{s\mathfrak{A} Y_s^x}
        \int_0^s \int_{\mathbb{R}_0}
        \frac{2|z|Y_r^x}{\lambda_0(r, \eta_{X_r^x})}
        \tilde N(dz, dr) 
        - \frac{1}{(t-s) \mathfrak{A} Y_s^x}
        \int_s^t \int_{\mathbb{R}_0}
        \frac{2|z|Y_r^x}{\lambda_0(r, \eta_{X_r^x})}
        \tilde N(dz, dr),
\end{align*}
\noindent where the normalization constant $\mathfrak{A}$ is given by $\mathfrak{A} = \int_{[-\frac12, \frac12]} |z|^2\,\nu(dz)=\frac14$, and $Y_s^x$ is a process which may be computed according to relation~\eqref{eqt5} and equivalents to 
$$\exp\{(a-b^\frac12 -\lambda)s+W_s\} (1+\lambda)^{\int_0^s \int_{[-\frac12, \frac12]} N(dz,dr)}  .$$

\noindent This example concretely illustrates the explicit form of the Malliavin weight $\Pi$ for a class of mean-field SDEs with jumps.
\end{example}


\section{Variance reduction}
\noindent In the preceding sections, we explored representations in which the random variables used to estimate expected values may exhibit considerable variance. To effectively mitigate this variance and significantly enhance the accuracy of Monte Carlo simulations, the strategic application of variance‑reduction techniques is demonstrably paramount. In \citep{ref-moreni2004}, Moreni introduces a novel variance-reduction method that applies importance sampling in the Monte Carlo pricing of American options, effectively integrating the Longstaff-Schwartz algorithm to enhance computational efficiency. In Subsection \ref{5.1}, we examine the localization technique within the framework of our conditional expectation representation. Although this approach has previously been explored by Hansen \citep{ref-hansen2005} and Daveloose et al. \citep{ref-daveloose2019}, we tailor it here to suit the specific setting of our analysis.
\subsection{Localisation}\label{5.1}
\noindent We adapt the localization technique outlined in \citep{ref-daveloose2019} for the Conditional Malliavin method presented in Theorem \ref{th2}. For the proofs related to the subsequent proposition, please refer to \citep{ref-daveloose2019}.

\begin{proposition}\label{th6}
\noindent Assume the setting of Theorem \ref{th2}, then for any continuous function with bounded derivative $\psi : \mathbb{R} \to [0,\infty)$ satisfying $\int_{\mathbb{R}} \psi(t)dt = 1$ and for all $\alpha \in \mathbb{R}$, we have

\begin{align}
\mathbb{E}[f(F) \mid G = \alpha] &= \frac{\mathcal J^\psi_{F,G}[f](\alpha)}{\mathcal J^\psi_{F,G}[1](\alpha)}, 
\end{align}
where $\mathcal J^\psi_{F,G}[ \cdot ](\alpha)$ is given by
\begin{align}
\mathcal J^\psi_{F,G}[.](\alpha) &= \mathbb{E}\left[ .(F)\Big(\psi(G - \alpha) + \delta(u)[\mathcal H(G - \alpha) - \Psi(G - \alpha)]\Big) \right],
\end{align}

\noindent where $\Psi(x) = \int_{-\infty}^x \psi(t)dt.$
\end{proposition}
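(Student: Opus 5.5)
The plan is to show that the numerator and denominator of \eqref{eqt9} in Theorem \ref{th2} are unchanged when the weight $\mathcal{H}(G-\alpha)\delta(u)$ is replaced by $\psi(G-\alpha)+\delta(u)[\mathcal H(G-\alpha)-\Psi(G-\alpha)]$. In other words, for any admissible $g$ (namely $g=f$ and $g\equiv 1$) it suffices to prove the identity
\begin{equation*}
\mathbb{E}\big[g(F)\,\Psi(G-\alpha)\,\delta(u)\big]=\mathbb{E}\big[g(F)\,\psi(G-\alpha)\big].
\end{equation*}
Once this holds, adding $\mathbb{E}[g(F)\mathcal H(G-\alpha)\delta(u)]$ to both sides shows $\mathcal J^\psi_{F,G}[g](\alpha)=\mathbb{E}[g(F)\mathcal H(G-\alpha)\delta(u)]$. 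The ratio is then exactly the right-hand side of \eqref{eqt9}, and Theorem \ref{th2} yields the claim.

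To prove the identity, first take $g=f\in C'_b$ and apply the duality formula (Theorem \ref{IBF}) to the random variable $g(F)\Psi(G-\alpha)$. This is legitimate because $\Psi$ is bounded, $C^1$ with bounded derivative $\psi$, and $F,G\in W^{1,\vartheta,2}$, so the product lies in the domain. Duality gives $\mathbb{E}[g(F)\Psi(G-\alpha)\delta(u)]=\mathbb{E}[D_u(g(F)\Psi(G-\alpha))]$. By the product and chain rules of Appendix A,
\begin{equation*}
D_u\big(g(F)\Psi(G-\alpha)\big)=\Psi(G-\alpha)\,g'(F)\,D_uF+g(F)\,\psi(G-\alpha)\,D_uG .
\end{equation*}
Here $D_uF=\int_0^T\int_{\Theta_0}\langle u,D_{t,z}F\rangle N(dz,dt)$, and $D_uG$ is defined in the same way. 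Conditioning on $\sigma(F,G)$, the first term vanishes by \eqref{condition2}, since $\Psi(G-\alpha)g'(F)$ is $\sigma(F,G)$-measurable. By \eqref{condition1} the second term becomes $\mathbb{E}[g(F)\psi(G-\alpha)]$. For $g\equiv1$ only the second term is present.

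To pass to a general Borel $f$ with $f(F)\in\mathcal L^2$, approximate $f$ by $f_k\in C'_b$ converging to it in $\mathcal L^2(\mathcal P\circ F^{-1})$, as in the proof of Theorem \ref{th2} in Appendix B. Both sides of the identity are continuous under this convergence, because $\Psi$ and $\psi$ are bounded and $\delta(u)\in\mathcal L^2$; Cauchy--Schwarz therefore controls each term.

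I expect the main obstacle to be justifying the chain rule for the Poisson directional derivative. In this Poisson-space framework $D_u$ acts on the jump structure, so the chain rule in Appendix A must hold for $C^1$ functions with bounded derivative applied to elements of $W^{1,\vartheta,2}$. I will rely on its stated form, used already in the proof of Theorem \ref{th1} with $\Phi_{\epsilon,n}$. If that form requires more smoothness, I would first mollify $\Psi$ and then pass to the limit using boundedness of $\psi$ and dominated convergence. The integrability needed for the conditioning step comes from $u\in\mathcal V^{\infty-}$.
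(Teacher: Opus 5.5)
Your strategy matches the Daveloose et al.\ argument that the paper defers to. You prove $\mathbb{E}[g(F)\Psi(G-\alpha)\delta(u)]=\mathbb{E}[g(F)\psi(G-\alpha)]$ using duality, the chain rule and \eqref{condition1}--\eqref{condition2}. You then add and subtract $\mathbb{E}[g(F)\mathcal H(G-\alpha)\delta(u)]$, invoke Theorem~\ref{th2}, and extend to Borel $f$ by density. The problem is that your duality step has the opposite sign from the tool you cite. You write ``Duality gives $\mathbb{E}[g(F)\Psi(G-\alpha)\delta(u)]=\mathbb{E}[D_u(g(F)\Psi(G-\alpha))]$.'' Theorem~\ref{IBF} instead states $\mathbb{E}[D_\vartheta X]=-\mathbb{E}[X\delta(\vartheta)]$. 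For $\vartheta=(0,u)$, its $\delta(\vartheta)$ is exactly the $\delta(u)=\int\!\int \mathrm{div}(\kappa u)/\kappa\,\tilde N$ of \eqref{weights} used in Theorems~\ref{th1}--\ref{th2}, and the proof of Theorem~\ref{th1} uses this minus sign explicitly. With the correct sign, your own computation gives
\[
\mathbb{E}\big[g(F)\psi(G-\alpha)\big]=\mathbb{E}\big[D_u\big(g(F)\Psi(G-\alpha)\big)\big]-\mathbb{E}\big[\Psi(G-\alpha)g'(F)D_uF\big]=-\mathbb{E}\big[g(F)\Psi(G-\alpha)\delta(u)\big].
\]
The add-and-subtract step then yields $\mathcal J^\psi_{F,G}[g](\alpha)=2\,\mathbb{E}[g(F)\psi(G-\alpha)]+\mathbb{E}[g(F)\mathcal H(G-\alpha)\delta(u)]$. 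The resulting ratio coincides with \eqref{eqt9} only if $\mathbb{E}[f(F)\psi(G-\alpha)]=\mathbb{E}[f(F)\mid G=\alpha]\,\mathbb{E}[\psi(G-\alpha)]$, which is false in general. In Theorems~\ref{th1}--\ref{th2} an overall sign cancels between numerator and denominator. Here it does not, because $\psi(G-\alpha)$ carries no factor $\delta(u)$, so the relative sign decides whether the result holds.

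The proposition as written is therefore true only if $\delta(u)$ denotes the adjoint of $D_u$ in the usual sense, $\mathbb{E}[D_uX]=\mathbb{E}[X\delta(u)]$. That is minus the paper's $\delta(u)$, and it is the convention of the Wiener-space source being cited. If you keep the paper's $\delta(u)$, the correct localized weight is $\psi(G-\alpha)-\delta(u)[\mathcal H(G-\alpha)-\Psi(G-\alpha)]$. The paper's statement has the same inconsistency, but your proof should fix a convention explicitly rather than drop the minus sign from Theorem~\ref{IBF}. Once that is repaired, the rest is fine:
\begin{itemize}
\item The conditioning on $\sigma(F,G)$ is correct.
\item The $\mathcal L^2$ density extension works because $\delta(u)\in\mathcal L^2$ and $\psi$ is bounded. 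You should say why $\psi$ is bounded: a nonnegative, integrable, Lipschitz function is bounded.
\item The map $(x,y)\mapsto g(x)\Psi(y-\alpha)$ is not globally Lipschitz when $g\in C'_b$ is unbounded. So you need a product rule or a truncation of $g$, not the Lipschitz chain rule quoted after Theorem~\ref{IBF}.
\end{itemize}
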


\noindent Having established the localized representations of the formulas for conditional expectation, a key question follows: what is the optimal choice for the localizing function $\psi$? To identify the optimal function, we begin with the assumption that the additional constant \( c \) in the function \( \mathcal{H} \) is set to zero, i.e., \(\mathcal{H}(x) = \mathbf{1}_{\{x \geq 0\}}\). This simplifying assumption streamlines the subsequent analysis and critically facilitates the characterization of the optimal localizing function. Let \( Z \) denote the factor equal to \( 1 \). We consequently arrive at the expression of the practical expectation form

\begin{align}
\mathcal{J}_{F,G}^{\psi}[\cdot](\alpha) &= \mathbb{E}\left[ \mathcal{F}\left( \psi(G - \alpha)Z + \Pi[\mathcal H(G - \alpha) - \Psi(G - \alpha)] \right) \right],
\end{align}
\noindent is approximated via Monte Carlo simulation. Specifically, let \( N \) represent the number of simulated values $F^{q}, G^{q}, \Pi^{q}$, and $Z^q$ of  $F, G, \Pi$, and $Z$, respectively; thus, we can derive the following estimation
\begin{align}
\mathcal{J}^{\psi}_{F,G}[\cdot](\alpha) & \approx \frac{1}{N} \sum_{q=1}^{N} \cdot \left( F^{q} \right) \left( \psi(G^{q}-\alpha) Z^{q} \right) + \Pi^{q} \left[ \mathcal H(G^{q}-\alpha) - \Psi(G^{q}-\alpha) \right].
\end{align}
\noindent To effectively reduce variance, the goal is to minimize the integrated mean squared error associated with the localization function $\psi$. Therefore, we must pursue the following optimization problem (This criterion was first proposed by \citep{ref-kohatsu2002})
\begin{align}\label{I}
\inf_{\psi \in \mathcal{L}^1} I(\psi),
\end{align}
\noindent where 
\begin{align}\label{L1}
\mathbf{L}^1 &= \{ \psi : \mathbb{R} \to [0, \infty) : \psi \in C^1(\mathbb{R}), \psi(+\infty) = 0, \int_{\mathbb{R}} \psi(t)dt = 1 \} 
\end{align}
\noindent and \( I \) equals the integrated variance up to a constant (in terms of \( \psi \))

\begin{align}\label{II}
I(\psi) &= \int_{\mathbb{R}} \mathbb{E}\left[ {\cdot}^2 (f)\big(\psi(G - \alpha)Z + \Pi [\mathcal H(G - \alpha) - \Psi(G - \alpha)]\big)^2  \right]dx. 
\end{align}

\noindent The determination of the optimal localization function $\psi$ is presented in the following proposition. The optimal localization function will differ between the numerator and denominator, as the associated optimization problems are fundamentally distinct. The proof presented by \citep{ref-bally2005} can be readily extended to the current context.

\begin{proposition} The infimum of the optimization problem \eqref{I} with \( I(\psi) \) given by \eqref{II} and \( \mathcal{H}(x) = \mathbf{1}_{\{x>0\}} \), is reached at \( \psi^* \), where \( \psi^* \) is the probability density of the Laplace distribution with parameter \( \lambda^* \), i.e., for all \( t \in \mathbb{R} \),
\begin{align*}
\psi^*(t) = \frac{\lambda^*}{2} e^{-\lambda^* |t|},
\end{align*}
\noindent where
\begin{align}\label{lambda}
\lambda^* = \left( \frac{\mathbb{E}[\cdot^2(F)\Pi^2]}{\mathbb{E}[\cdot^2(F)Z^2]} \right)^{\frac{1}{2}}.
\end{align}
\end{proposition}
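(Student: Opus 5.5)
This is the Bally–Caramellino–Zanette calculus-of-variations argument. Write $\Phi:=f(F)$ (the "$\cdot$" placeholder, so $\Phi=1$ for the denominator) and take $\mathcal H(x)=\mathbf 1_{\{x>0\}}$. The integration variable in \eqref{II} is $\alpha$, so $I(\psi)=\int_{\mathbb R}\mathbb E\big[\Phi^2(\psi(G-\alpha)Z+\Pi[\mathcal H(G-\alpha)-\Psi(G-\alpha)])^2\big]d\alpha$.

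\textbf{Step 1: remove the dependence on $G$.} Expand the square and use Fubini. Then substitute $t=G-\alpha$ inside the $d\alpha$-integral, which is valid pathwise for each fixed $\omega$. The $G$-dependence disappears, and we get
\begin{align*}
I(\psi)=\mathbb E[\Phi^2Z^2]\int_{\mathbb R}\psi^2(t)\,dt+2\,\mathbb E[\Phi^2 Z\Pi]\int_{\mathbb R}\psi(t)(\mathcal H(t)-\Psi(t))\,dt+\mathbb E[\Phi^2\Pi^2]\int_{\mathbb R}(\mathcal H(t)-\Psi(t))^2\,dt .
\end{align*}

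\textbf{Step 2: the cross term is constant.} Integrate by parts, using $\Psi'=\psi$, $\Psi(-\infty)=0$ and $\Psi(+\infty)=1$:
\begin{align*}
\int_{-\infty}^0 -\psi\Psi\,dt+\int_0^\infty \psi(1-\Psi)\,dt=-\tfrac12\Psi(0)^2+\tfrac12(1-\Psi(0))^2=\tfrac12-\Psi(0).
\end{align*}
The middle term is therefore not a priori constant. I would handle it by noting that with $Z=1$ we have $\mathbb E[\Phi^2 Z\Pi]=\mathbb E[\Phi^2\,\delta(u)]$. This should vanish, or at least be negligible, by the duality formula (Theorem \ref{IBF}) together with condition \eqref{condition2}, as in \citep{ref-bally2005}. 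If it does not vanish, I would keep it and carry it into the Euler–Lagrange equation; it then only contributes a boundary condition at $0$.

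\textbf{Step 3: reduce to a variational problem.} Set $a=\mathbb E[\Phi^2Z^2]$ and $b=\mathbb E[\Phi^2\Pi^2]$. We must minimize
\begin{align*}
J(\Psi)=\int_{\mathbb R}\big(a\Psi'^2+b(\mathcal H-\Psi)^2\big)\,dt
\end{align*}
over distribution functions $\Psi$. The Euler–Lagrange equation is $a\Psi''=b(\Psi-\mathcal H)$ on each half-line. Its solutions are $\Psi=Ce^{\lambda t}$ for $t<0$ and $\Psi=1-De^{-\lambda t}$ for $t>0$, with $\lambda=\sqrt{b/a}=\lambda^*$ as in \eqref{lambda}; boundedness rules out the growing exponentials. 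Requiring $\Psi\in C^1$ gives $C=D=\tfrac12$, so $\psi^*=\Psi'=\tfrac{\lambda^*}{2}e^{-\lambda^*|t|}$.

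\textbf{Step 4: prove global optimality.} $J$ is strictly convex and the constraint set is convex. Hence for any admissible perturbation $\Psi^*+\varepsilon h$, with $h$ bounded, $h(\pm\infty)=0$ and $h'\in L^2$, the first variation $\int(a\Psi^{*\prime}h'-b(\mathcal H-\Psi^*)h)\,dt$ vanishes after integrating by parts on each half-line. The boundary terms at $0$ cancel by $C^1$ matching. So $J(\Psi^*+h)=J(\Psi^*)+\int(ah'^2+bh^2)\,dt\ge J(\Psi^*)$.

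\textbf{Main obstacle.} The main difficulty is justifying Step 2, i.e.\ the treatment of the cross term $\mathbb E[\Phi^2Z\Pi]$. A second point is that $\psi^*$ is not $C^1$ at $0$, so membership in $\mathbf L^1$ fails and the infimum must be understood as being attained in the closure of $\mathbf L^1$. I would address this by approximating $\psi^*$ with smooth densities and using continuity of $I$ in the $H^1$-norm of $\Psi$.
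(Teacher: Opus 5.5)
Your Steps 1, 3 and 4 are the Bally--Caramellino--Zanette argument that the paper cites without giving a proof of its own, and they are correct. The gap is in Step 2. There you point to the right mechanism but leave it hedged, and the fallback you offer does not work.

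Write $a=\mathbb{E}[\Phi^2Z^2]$, $b=\mathbb{E}[\Phi^2\Pi^2]$ and $c=\mathbb{E}[\Phi^2Z\Pi]$. Your own computation gives $I(\psi)=J(\Psi)+c\,(1-2\Psi(0))$. If $c\neq 0$, the first variation replaces $C^1$ matching by the jump condition $a\bigl(\Psi'(0^-)-\Psi'(0^+)\bigr)=c$. The minimizer is then the asymmetric Laplace density $\lambda Ce^{\lambda t}\mathbf{1}_{\{t<0\}}+\lambda(1-C)e^{-\lambda t}\mathbf{1}_{\{t>0\}}$, with $\lambda=\sqrt{b/a}$ and $C=\tfrac12+\tfrac{c}{2\sqrt{ab}}$. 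So the symmetric $\psi^*$ of the statement is optimal exactly when $c=0$. This cannot be absorbed into a boundary condition or called negligible; it has to be proved.

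It does hold exactly in the setting of Theorem \ref{th2}, with $Z=1$ and $\Pi=\delta(u)$. Put $D_uF=\int_0^T\int_{\Theta_0}\langle u(t,z),D_{t,z}F\rangle_{\mathbb{R}^d}N(dz,dt)$. For any $g\in C'_b$, the chain rule and Theorem \ref{IBF} give
\begin{align*}
\mathbb{E}[g(F)\delta(u)]=-\mathbb{E}[g'(F)D_uF]=-\mathbb{E}\bigl[g'(F)\,\mathbb{E}[D_uF\mid\sigma(F,G)]\bigr]=0
\end{align*}
by \eqref{condition2}. Taking $g\equiv 1$ covers the denominator.

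Since $\delta(u)\in\mathcal{L}^1$, a monotone class argument extends this to all bounded Borel $g$, so $\mathbb{E}[\delta(u)\mid F]=0$. Now take a Borel $f$ with $\mathbb{E}[f^2(F)]$ and $\mathbb{E}[f^2(F)\Pi^2]$ finite; the second is needed anyway for $\lambda^*<\infty$. Cauchy--Schwarz gives $f^2(F)\Pi\in\mathcal{L}^1$, and dominated convergence along $f^2\wedge n$ yields $c=0$. Then $I=J$, and your Steps 3--4 complete the proof.

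Your remark that $\psi^*\notin C^1$ is also correct. The infimum over $\mathbf{L}^1$ is therefore approached but not attained, a point the statement's phrase ``is reached at'' glosses over.
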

\noindent The localizing function delineated in the preceding proposition is optimal in terms of minimizing variance; however, it does not demonstrate optimality in numerical experiments regarding computational efficiency. Consequently, Bouchard and Warin \citep{ref-bouchard2012} proposed the use of the exponential localizing function
\begin{align*}
\psi(x) = \lambda^* e^{-\lambda^* x} \mathbf{1}_{\{x \geq 0\}},
\end{align*}
\noindent where $\lambda^*$ is given by \eqref{lambda}. In future discussions, we will elucidate how applying this function significantly reduces computational effort.

\subsection{Practical Variance Reduction for Mean-Field Models with Jump Components}

\noindent  In this section, we implement the localization method for mean-field SDEs with jumps to obtain numerically amenable representations of the relevant quantities. Drawing on Proposition \ref{th5} and \ref{th6}, we arrive at the following conclusions. Unless stated otherwise, we set $Z=1$ for notational convenience. \\
\noindent For every Borel‑measurable $ f $ such that $ f(X_t) \in \mathcal{L}^2(\Omega) $, for $ 0 < s < t < T $, and $ \alpha \in \mathbb{R} $, assuming that $X_t$ satisfies equation \eqref{eqt3}, it follows that

\begin{align}\label{vareq}
\mathbb{E}\!\left[ f(X_t)\,|\,X_s = \alpha \right]
= \frac{\mathbb{E}\left[ f(X_t)\Big(\psi(X_s - \alpha) + \Pi[\mathcal H(X_s - \alpha) - \Psi(X_s - \alpha)]\Big) \right]}{\mathbb{E}\Big[\psi(X_s - \alpha) + \Pi[\mathcal H(X_s - \alpha) - \Psi(X_s - \alpha)] \Big]},
\end{align}
\noindent where $\Pi = \delta(\hat{u})$ and $\hat{u}$ are prescribed in \eqref{Pi} and \eqref{uhat}, respectively. In accordance with \citep{ref-bally2005}, we define the optimal localization function as follows
\begin{align} \label{si}
\psi^{\hat{u}}(t) = \frac{\lambda^{\hat{u}}}{2} e^{-\lambda^{\hat{u}} |t|},
\end{align} 
\begin{equation}\label{lambdaa}
\lambda^{\hat{u}} = \left(
\frac{\mathbb{E}\!\left[ f^2(X_t)\,\big(\Pi_1 - \mathcal{G}_{s,t}\big)^2 \right]}
     {\mathbb{E}\!\left[ f^2(X_t) \right]}
\right)^{\frac{1}{2}},
\end{equation}
\noindent for
\begin{equation*}
\mathcal{G}_{s,t}:= \frac{1}{(t-s)\mathfrak{A}}
  \int_s^t \int_{\mathbb{R}_0}
     \frac{Y_r^x}{Y_s^x}
     \mathcal{J}(r,z) \tilde N(\delta r, dz).
\end{equation*}

\noindent We next describe how to numerically calibrate the $\lambda^{\hat{u}}$ and compute the associated weights using Malliavin calculus and Monte Carlo simulation, then implement the approach for American options and report numerical findings.\\
 \noindent Because the application of our proposed approach and its numerical evaluation both encompass the pricing of American options on a basket of two assets, and present the respective computational outcomes, it is natural to extend and rigorously examine the pertinent formulas in the multidimensional context. It should be stressed that the formula is provided solely in its variance-reduced version, arising from the implementation of the localization function.

\begin{proposition}
\noindent
Assume the setting of Theorem~\ref{th2}. Let $f$ be a Borel-measurable function such that $f(X_t) \in \mathcal{L}^2(\Omega)$, for $0 < s < t < T$ and $\alpha \in \mathbb{R}$. Suppose $X_t = (X^1_t, X^2_t, \dotsc, X^d_t)$, where each $X^i_t$, with $i = 1, \dotsc, d$, satisfies equation~\eqref{eqt3}. Then the following holds
\begin{align*}
\mathbb{E}\!\left[ f(X_t)\,|\,X_s = \alpha \right]
= \frac{\mathbb{E}\left[ f(X_t)\prod_{i=1}^{d}\left(\psi^i(X^i_s - \alpha) + \Pi^i\left[\mathcal{H}(X^i_s - \alpha) - \Psi^i(X^i_s - \alpha)\right]\right) \right]}{\mathbb{E}\left[\prod_{i=1}^{d}\left(\psi^i(X^i_s - \alpha) + \Pi^i\left[\mathcal{H}(X^i_s - \alpha) - \Psi^i(X^i_s - \alpha)\right]\right)\right]},
\end{align*}
\noindent
where, for each $X^i_t$, the quantities $\Pi^i$, $\psi^i$, and $\Psi^i$ are computed according to the relations previously described, utilizing the Malliavin weights derived in the preceding section.
\end{proposition}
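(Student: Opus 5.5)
The plan is to reduce the multidimensional statement to a product of the one-dimensional mechanisms behind Theorem \ref{th2} and Proposition \ref{th6}, applied coordinatewise. Here $\alpha=(\alpha_1,\dots,\alpha_d)$ and $X^i_s-\alpha$ means $X^i_s-\alpha_i$. As in the proof of Theorem \ref{th1}, the starting point is the regularized ratio
\[
\mathbb{E}[f(X_t)\mid X_s=\alpha]=\lim_{\epsilon\to0^+}\lim_{n\to\infty}\frac{\mathbb{E}\big[f(X_t)\prod_{i=1}^d\Phi_{\epsilon,n}(X^i_s-\alpha_i)\big]}{\mathbb{E}\big[\prod_{i=1}^d\Phi_{\epsilon,n}(X^i_s-\alpha_i)\big]}.
\]
Each Dirac factor is then replaced, one coordinate at a time, by a weighted Heaviside factor. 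To do this, for each $i$ I take a process $\hat u^i$ of the form \eqref{uhat}, built from the $i$-th component and its own first-variation process, but supported on a disjoint time-mark region: for instance, split $[0,s]$ and $(s,t]$ into $d$ subintervals, or use disjoint subsets of $\Theta_0$.

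The required normalization has two parts. First, $\mathbb{E}[D_{\hat u^i}X^j_s\mid\sigma(X_t,X_s)]=\delta_{ij}$, which is the analogue of condition \eqref{condition1}. Second, $D_{\hat u^i}X_t=0$ in conditional expectation, which is the analogue of condition \eqref{condition2}. The diagonal parts of both follow exactly as in Proposition \ref{th3} and Proposition \ref{th5}, since the factors $\frac1s\mathbf 1_{r\le s}$ and $-\frac{1}{t-s}\mathbf 1_{s<r\le t}$ cancel the contribution to $X_t$. The off-diagonal vanishing is the delicate point. It holds if the components are driven by independent jump noises or if the first-variation matrix $Y$ is diagonal; otherwise one replaces $Y_r^x(Y_s^x)^{-1}$ by the matrix inverse and solves a $d\times d$ linear system. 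I will phrase the argument under this independence/diagonal structure, which is implicit in the statement's use of the one-dimensional $\Pi^i$.

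With these directions fixed, I integrate by parts successively. Write $\Phi_{\epsilon,n}(X^1_s-\alpha_1)=D_{\hat u^1}\Psi_{\epsilon,n}(X^1_s-\alpha_1)$ and apply the duality formula (Theorem \ref{IBF}) with $F$ replaced by the random variable $f(X_t)\prod_{i\ge2}\Phi_{\epsilon,n}(X^i_s-\alpha_i)$. Because $D_{\hat u^1}$ annihilates $X_t$ and $X^i_s$ for $i\ge2$ in conditional expectation, the chain-rule terms drop out. What remains is $\delta(\hat u^1)=\Pi^1$ times the Heaviside factor. Iterating over $i=2,\dots,d$ yields $\prod_i\Pi^i\mathcal H(X^i_s-\alpha_i)$. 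Letting $n\to\infty$ and then $\epsilon\to0$ by dominated convergence, as at the end of the proof of Theorem \ref{th1}, gives the non-localized multidimensional formula.

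For the localization I follow the argument of Proposition \ref{th6} in each coordinate separately. Split $\mathcal H=\Psi^i+(\mathcal H-\Psi^i)$, and undo the integration by parts on the $\Psi^i$ part. This works because $\Psi^i$ is $C^1$ with derivative $\psi^i$, so $\mathbb{E}[\Pi^i\Psi^i(X^i_s-\alpha_i)\,\cdot\,]=\mathbb{E}[\psi^i(X^i_s-\alpha_i)\,\cdot\,]$ by the same duality. Each factor then becomes $\psi^i+\Pi^i[\mathcal H-\Psi^i]$, and expanding the product gives the numerator. Taking $f\equiv1$ gives the denominator.

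I expect the main obstacle to be justifying the successive dualities when the other factors are themselves stochastic integrals $\Pi^j$. The integrands must satisfy $f(X_t)\prod_{j\neq i}\Pi^j\,\hat u^i\in\mathcal L^2$, and the $\Pi^j$ are not annihilated by $D_{\hat u^i}$ in general. I would handle this by choosing the supports of the $\hat u^i$ disjoint, so that $D_{\hat u^i}\Pi^j$ involves only the predictable dependence, which vanishes on the disjoint region, together with Hölder bounds from $u\in\mathcal V^{\infty-}$ and \eqref{yy-1}.
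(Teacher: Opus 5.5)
The paper gives no proof of this proposition. It is stated as the coordinatewise extension of Propositions \ref{th5} and \ref{th6}, with each coordinate driven by its own noise as in Example \ref{EX3}. Your outline (regularize with $\prod_i\Phi_{\epsilon,n}(X^i_s-\alpha_i)$, integrate by parts one coordinate at a time, then localize) is the natural way to make this precise. However, the step that produces the \emph{product} $\prod_i\Pi^i$ is not justified by what you propose.

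\textbf{Where the iteration breaks.} After the first integration by parts, the integrand carries $\Pi^1\Psi_{\epsilon,n}(X^1_s-\alpha_1)$, which is not $\sigma(X_t,X_s)$-measurable.
\begin{enumerate}
\item Your normalizations hold only conditionally on $\sigma(X_t,X_s)$. So at the second step they no longer let you replace $\Phi_{\epsilon,n}(X^2_s-\alpha_2)$ by $D_{\hat u^2}\Psi_{\epsilon,n}(X^2_s-\alpha_2)$ under the expectation, nor discard the chain-rule terms.
\item Only conditional versions are available here. Unlike the Wiener case, the factors $\frac1s\mathbf 1_{r\le s}$ and $-\frac1{t-s}\mathbf 1_{s<r\le t}$ do not cancel pathwise: $D_{\hat u}X_t=\frac{Y_t}{\mathfrak A Y_s}\big[\frac1s\int_0^s\!\int(1\wedge|z|^2)N(dz,dr)-\frac1{t-s}\int_s^t\!\int(1\wedge|z|^2)N(dz,dr)\big]$ is random.
\item $D_{\hat u^2}$ now also falls on $\Pi^1$, so the weight produced is $\Pi^1\delta(\hat u^2)$ plus a term involving $D_{\hat u^2}\Pi^1$. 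Iterating gives, in general, a nested weight rather than a product.
\end{enumerate}

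\textbf{Why your proposed fixes do not work.}
\begin{enumerate}
\item Disjoint supports do not remove $D_{\hat u^2}\Pi^1$. Besides the atoms of its own $\tilde N$-integral, $\Pi^1$ depends on the noise over all of $[0,t]$, including the support of $\hat u^2$, whenever that noise is shared. This dependence enters through $(Y^1_s)^{-1}$, $Y^1_r$, $M^1_{r,z}$ and $\partial_z\lambda(r,X^1_r,z,\cdot)$.
\item A diagonal $Y$ does not give $D_{\hat u^i}X^j_s=0$ for $j\neq i$, for the same reason. $D_{r,z}X^j_s$ starts from $\partial_z$ of the jump coefficient of $X^j$, which is nonzero whenever $X^j$ jumps with the same $N$.
\item Splitting $[0,s]$, $(s,t]$ or $\Theta_0$ changes the weights, so you would not obtain the $\Pi^i$ of \eqref{Pi} that appear in the statement.
\end{enumerate}

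\textbf{What closes the gap.} You need the hypothesis you mention only in passing, which the statement leaves implicit: the coordinates are driven by mutually independent noises $(W^i,N_i)$, and $\hat u^i$ is the one-dimensional process \eqref{uhat} of the $i$-th coordinate, acting on $N_i$ alone.
\begin{enumerate}
\item Then $D_{\hat u^i}X^j=0$ and $D_{\hat u^i}\Pi^j=0$ pathwise for $j\neq i$.
\item For functionals of $N_i$ such as $D_{\hat u^i}X^i_s$ and $D_{\hat u^i}X^i_t$, independence makes conditioning on $\sigma(X^i_s,X^i_t)$ jointly with the noises $j\neq i$ the same as conditioning on $\sigma(X^i_s,X^i_t)$ alone.
\item That larger $\sigma$-algebra contains every other factor appearing at step $i$ ($f(X_t)$, the $X^j_s$ and the $\Pi^j$ for $j\neq i$). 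So the one-dimensional conditions \eqref{condition1}--\eqref{condition2} for $(X^i_t,X^i_s,\hat u^i)$, which are what Proposition \ref{th5} uses, are exactly what each step requires.
\item Equivalently, freeze the other noises, apply the one-dimensional identity behind Proposition \ref{th6} in coordinate $i$, and integrate.
\end{enumerate}
With this in place, your limits and your coordinatewise localization go through unchanged. No splitting of supports is needed, and the weights are precisely those of \eqref{Pi}.
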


\noindent The ensuing proposition outlines the method for establishing a locally optimal function in a multidimensional context.

\begin{proposition}
\noindent Let $\mathbf{L}_1$ is defined according \eqref{L1} and $\mathbf{L}_d = \{ \psi : \mathbb{R}^d \to [0, +\infty) ; \ \psi(x) = \prod_{i=1}^{d} \psi_i(x_i), \ \text{where } \psi_i \in \mathbf{L}_1, \text{ for any } i \}$. Then 
\begin{align*}
\inf_{\psi \in\mathbf{L}_d} I_d(\psi) = I_d(\psi^*)
\end{align*}
\noindent where 
\begin{align*}
I_d(\psi) &= \int_{\mathbb{R}^d} \mathbb{E}\left[ {\cdot}^2 (f){\prod_{i=1}^{d}}\big(\psi^i(X_s ^i - \alpha)Z + \Pi^i [\mathcal {H}^i(X_s ^i - \alpha) - \Psi^i(X_s ^i - \alpha)]\big)^2  \right]dx. 
\end{align*}
\noindent where $\psi^*(x) = \prod_{i=1}^{d} \psi^*_i(x_i)$, with $\psi^*_j(\xi) = {\lambda^*_j}/2 e^{-\lambda^*_j |\xi|}$ being a Laplace probability density function on $\mathbb{R}$ and $\lambda^*_j = \lambda^*_j[f]$ satisfies the following system of nonlinear equations:
\begin{align*}
\lambda^{* 2}_j = \frac{\mathbb{E}\left(f^2(X_t) \Pi_ j ^2 \prod_{i \ne j} \left[\lambda^2_i +\Pi_i ^2 \right]\right)}{\mathbb{E}\left(f^2(X_t) \prod_{i \ne j} \left[\lambda^2_i + \Pi_i ^2 \right]\right)}\hspace{5mm}for\quad j = 1, \ldots, d.
\end{align*}
\end{proposition}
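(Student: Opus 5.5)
We follow the one-dimensional argument of \citep{ref-bally2005} and reduce the problem to $d$ coupled scalar problems, one per coordinate. Write $\psi=\prod_i\psi_i$ with $\psi_i\in\mathbf{L}_1$, $\Psi_i(x)=\int_{-\infty}^x\psi_i$, and take $Z=1$ and $\mathcal H(x)=\mathbf 1_{\{x\ge0\}}$. Integrating in $\alpha=(\alpha_1,\dots,\alpha_d)$ (the variable $dx$ in $I_d$), Fubini and the product structure of the integrand give
\begin{align*}
I_d(\psi)=\mathbb{E}\Big[f^2(X_t)\prod_{i=1}^d \int_{\mathbb{R}}\big(\psi_i(X^i_s-\alpha_i)+\Pi_i[\mathcal H(X^i_s-\alpha_i)-\Psi_i(X^i_s-\alpha_i)]\big)^2 d\alpha_i\Big].
\end{align*}
After the substitution $y=X^i_s-\alpha_i$, each one-dimensional integral no longer depends on $X^i_s$.

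\textbf{Step 1 (scalar identity).} For a fixed $\psi_i$ we expand the square, using $\Psi_i(+\infty)=1$ and $\Psi_i(-\infty)=0$. The cross term is $2\Pi_i\int\psi_i(\mathcal H-\Psi_i)\,dy=2\Pi_i\big(\int_0^\infty\psi_i-\tfrac12\big)$. The remaining term $\int(\mathcal H-\Psi_i)^2dy$ is finite whenever $\psi_i$ has a first moment. This yields
\begin{align*}
\int_{\mathbb{R}}\big(\psi_i+\Pi_i(\mathcal H-\Psi_i)\big)^2dy=a_i+2\Pi_i b_i+\Pi_i^2 c_i,
\end{align*}
where $a_i=\int\psi_i^2$, $b_i=\int_0^\infty\psi_i-\tfrac12$ and $c_i=\int(\mathcal H-\Psi_i)^2$. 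Hence $I_d(\psi)=\mathbb{E}\big[f^2\prod_i(a_i+2\Pi_ib_i+\Pi_i^2c_i)\big]$.

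\textbf{Step 2 (coordinatewise minimization).} Fix $\psi_i$ for $i\neq j$ and set $w_j:=f^2(X_t)\prod_{i\ne j}(a_i+2\Pi_ib_i+\Pi_i^2c_i)\ge0$. This weight is nonnegative because each factor is an integral of a square. Then $I_d$ reduces to the one-dimensional functional $\mathbb E[w_j]\,a_j+2\mathbb E[w_j\Pi_j]\,b_j+\mathbb E[w_j\Pi_j^2]\,c_j$ of $\psi_j$. This is exactly the problem treated in \citep{ref-bally2005}. The Euler--Lagrange equation in $\Psi_j$ is $-\mathbb E[w_j]\Psi_j''+\mathbb E[w_j\Pi_j^2](\Psi_j-\mathcal H)=0$, which away from the linear $b_j$-term gives a Laplace-type minimizer. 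In the symmetric Laplace class one has $b_j=0$, and by a symmetrization argument (replacing $\psi_j$ by its reflection does not increase the functional) the minimizer can be taken symmetric. Direct computation for $\psi_j=\tfrac{\lambda}{2}e^{-\lambda|y|}$ gives $a_j=\lambda/4$ and $c_j=1/(4\lambda)$. Minimizing $\mathbb E[w_j]\lambda/4+\mathbb E[w_j\Pi_j^2]/(4\lambda)$ gives $\lambda_j^2=\mathbb E[w_j\Pi_j^2]/\mathbb E[w_j]$.

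\textbf{Step 3 (the nonlinear system).} Substituting the Laplace forms of the other coordinates, with $b_i=0$, $a_i=\lambda_i/4$ and $c_i=1/(4\lambda_i)$, each factor of $w_j$ becomes $(\lambda_i^2+\Pi_i^2)/(4\lambda_i)$. The positive constants $1/(4\lambda_i)$ cancel between numerator and denominator, leaving exactly the stated system for $\lambda_j^{*2}$. A critical point of $I_d$ on $\mathbf L_d$ must satisfy all $d$ coordinatewise optimality conditions simultaneously, so it solves this system. Conversely, at a solution $\lambda^*$ no single-coordinate change lowers $I_d$.

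\textbf{Main obstacle.} The hard part is upgrading coordinatewise optimality to a global infimum and proving that the system has a solution. I would first show that $I_d$ restricted to Laplace products, written in the variables $\mu_i=\log\lambda_i$, is a sum of terms $\mathbb E[f^2\prod_i g_i(\mu_i)]$ with each $g_i$ log-convex (namely $e^{\mu}/4+\Pi_i^2e^{-\mu}/4$). Sums and products of log-convex functions are log-convex, so $I_d$ is convex in $\mu$ and coercive provided $\mathbb E[f^2\Pi_i^2]>0$. Coercivity gives existence of a minimizer, and the vanishing gradient at that minimizer is precisely the system. Combining this with Step 2, where every non-Laplace $\psi_j$ is dominated by its Laplace replacement coordinate by coordinate, gives $\inf_{\mathbf L_d}I_d=I_d(\psi^*)$.
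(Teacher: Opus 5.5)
\textbf{The gap is in Step 2, in how you dispose of the linear term.} Your overall plan is sound: Tonelli to get $I_d(\psi)=\mathbb{E}\big[f^2(X_t)\prod_i(a_i+2\Pi_ib_i+\Pi_i^2c_i)\big]$, then coordinatewise reduction, the Laplace computation, the nonlinear system, and log-convexity in $\mu_i=\log\lambda_i$. It is more explicit than the paper, which only cites Bally et al. The failure is the removal of $2\,\mathbb{E}[w_j\Pi_j]\,b_j$ by symmetrization. Reflection $\psi_j(y)\mapsto\psi_j(-y)$ leaves $a_j$ and $c_j$ unchanged but sends $b_j\mapsto-b_j$. It therefore changes $I_d$ by $-4\,\mathbb{E}[w_j\Pi_j]\,b_j$, which is positive whenever $b_j$ has the sign opposite to $\mathbb{E}[w_j\Pi_j]$. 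Averaging $\psi_j$ with its reflection kills $b_j$, but it only bounds the result by the mean of the two values, not by $I_d(\psi)$.

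This is fatal, not cosmetic. Put $\alpha_j=\mathbb{E}[w_j]$, $\beta_j=\mathbb{E}[w_j\Pi_j]$, $\gamma_j=\mathbb{E}[w_j\Pi_j^2]$ and take
\[
\psi_\delta(y)=\lambda\big(\tfrac12+\delta\big)e^{\lambda y}\mathbf 1_{\{y<0\}}+\lambda\big(\tfrac12-\delta\big)e^{-\lambda y}\mathbf 1_{\{y\ge0\}},\qquad |\delta|\le\tfrac12 .
\]
Then $a=\lambda(\frac14+\delta^2)$, $b=-\delta$ and $c=\lambda^{-1}(\frac14+\delta^2)$. The $j$-th coordinate functional equals $(\alpha_j\lambda+\gamma_j/\lambda)(\frac14+\delta^2)-2\beta_j\delta$, whose derivative at $\delta=0$ is $-2\beta_j$. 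So if $\beta_j\neq0$ at $\psi^*$, tilting the $j$-th factor strictly lowers $I_d$ below $I_d(\psi^*)$. The linear term you set aside in the Euler--Lagrange equation produces the jump condition $\alpha_j(\psi_j(0^-)-\psi_j(0^+))=\beta_j$, which forces an asymmetric minimizer.

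\textbf{What is missing} is a proof that every linear coefficient vanishes:
$\mathbb{E}\big[f^2(X_t)\,\Pi_j\prod_{i\ne j}(a_i+2\Pi_ib_i+\Pi_i^2c_i)\big]=0$ for arbitrary $(a_i,b_i,c_i)$. You need this for arbitrary $\psi_i$, not only Laplace ones, because your last step replaces non-Laplace factors one at a time.

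The natural route is the duality formula (Theorem \ref{IBF}): $\mathbb{E}[w_j\delta(\hat u_j)]=-\mathbb{E}[D_{\hat u_j}w_j]$.
\begin{itemize}
\item If the coordinates are driven by independent noises, $D_{\hat u_j}$ annihilates $X^i$ and $\Pi_i$ for $i\ne j$. Then $D_{\hat u_j}w_j=2f\,\partial_jf(X_t)\,D_{\hat u_j}X^j_t\prod_{i\ne j}(\cdots)$.
\item Condition on $\sigma(X^j_t,X^j_s)$ joined with the other coordinates' $\sigma$-field. Then \eqref{condition2} gives zero, first for smooth $f$ and then by density.
\item For $d=1$ this reduces to $\mathbb{E}[f^2(F)\delta(u)]=0$, which is exactly what \eqref{condition2} provides.
\end{itemize}
The independence hypothesis is not in the statement and must be made explicit.

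Once all $\beta_j=0$, each coordinate problem $\alpha_ja_j+\gamma_jc_j$ is convex and reflection-invariant, and the rest of your argument goes through. Two smaller points remain:
\begin{itemize}
\item Coercivity in $\mu$ requires a nondegeneracy condition such as $\mathbb{E}[f^2\prod_{i\in S}\Pi_i^2]>0$ for every $S\subseteq\{1,\dots,d\}$; positivity for singletons alone does not suffice. For example, if $\Pi_1\Pi_2=0$ almost surely, $I_d$ stays bounded along $\mu_1=\mu_2\to-\infty$.
\item $\psi^*\notin C^1$, so the infimum over $\mathbf L_d$ is approached by smoothing rather than attained.
\end{itemize}
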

\begin{proof}
\noindent Reference \citep{ref-bally2005} contains the pertinent proof and establishes the formulation of the Malliavin weights corresponding to the symbol $\Theta_{s,t, i}$.
\end{proof}
\noindent In accordance with this theorem, the result presented for $\hat{u}$ can be extended to a multidimensional framework. For any asset $i$ whose price process follows a mean-field SDE with jumps, the corresponding parameter $\lambda_i^{\hat{u}}$ is characterized.

\section{Implementations and simulations}
\noindent In this section, we meticulously present the application of representation formulas in the context of pricing American put options within mean-field stochastic differential equations with jumps. The subsequent section will detail the numerical evidence stemming from the empirical application of the methodology outlined in Appendix C.
\subsection{Numerical Expriments}
\noindent The core of this section involves implementing the Malliavin Monte Carlo method for pricing American options driven by a mean-field SDE with jumps. We selectively adopt the Euler–Maruyama method as the primary computational tool, valuing its established efficacy and advantageous computational footprint.
\noindent To rigorously assess the convergence properties of the Euler-Maruyama method as applied to the mean-field SDE with jumps, we draw upon the comprehensive discussions presented in \citep{ref-sun2021}. For the accurate computation of the Malliavin weights, the simulation must concurrently encompass both the asset process $X$ and the auxiliary process $Y$.
\noindent To ensure convergence and validate the accuracy of the results, these simulations were conducted with a sample size of \(M = 1000\) and small step sizes of \(\varepsilon = 2^{-12}\) were utilized to ensure that the numerical solution aligns effectively with the dynamics of the mean-field SDE with jumps. Utilizing the localization function technique for variance reduction, we rigorously apply the steps of the MMC algorithm to various examples sourced from \citep{ref-sun2021}. In the foregoing, we have analysed the following examples under two separate cases:
\begin{itemize}
  \item For a single underlying asset with price \(x \in \mathbb{R}_+\), the payoff of a standard one-dimensional American put option is
  \[
    \Phi(x) = (K - x)^{+}.
  \]

  \item For a two-asset underlying with price vector \(x = (x_1, x_2) \in \mathbb{R}_+^2\), the payoff of an American put on the maximum of the two assets is
  \[
    \Phi(x) = \bigl(K - \max(x_1, x_2)\bigr)^{+}.
  \]
\end{itemize}
\noindent The evolution of each asset price is described by a mean-field stochastic differential equation with jumps. The finite difference method is employed as a benchmark against which the efficiency of our method is evaluated.\\
\begin{example}\label{E1}
\noindent  Consider the following mean-field SDE with jumps with \(X_0 ^x= x_0\):

\begin{align*}
dX_s ^x &= a(\mathbb{E}[X_s ^x] + X_s ^x)ds + bX_s ^x dW_s + \int_{\mathbb{R}_0} c z (\mathbb{E}[X_s ^x] + X_{s^-} ^x) \tilde{N}(dz, ds),
\end{align*}
where \(a,b,c\in\mathbb{R}\) are constants, \(W\) is a standard Brownian motion, and \(\tilde{N}\) is the compensated Poisson measure associated with the Lévy measure \(\nu(dz)=\kappa(z)\,dz\). We assume that the Lévy density is uniform on a bounded support $\kappa(z) = \mathbf{1}_{\{|z|< \tfrac{1}{2}\}}$, implying that the jump amplitudes are uniformly distributed within the bounded domain \(|z|<\frac{1}{2}\).
\noindent The support of \(\nu\) is \(\Theta_0 = \mathbb{R}_0 = \mathbb{R}\setminus\{0\}\),
and its complement is \(\Theta^c = \{0\}\).
Hence, for any \(z \in \Theta_0\),
\begin{align*}
\mathrm{dist}(z,\Theta^c) = |z|, \qquad
\varrho(z) = 1 \vee |z|^{-1} = \max(1,|z|^{-1}).
\end{align*}
\noindent The function \(\varrho(z)\) controls the allowed growth of gradients of functions near the singular boundary \(z=0\).
\begin{figure}[H]

             \subfloat{\includegraphics[width=1\textwidth, keepaspectratio]{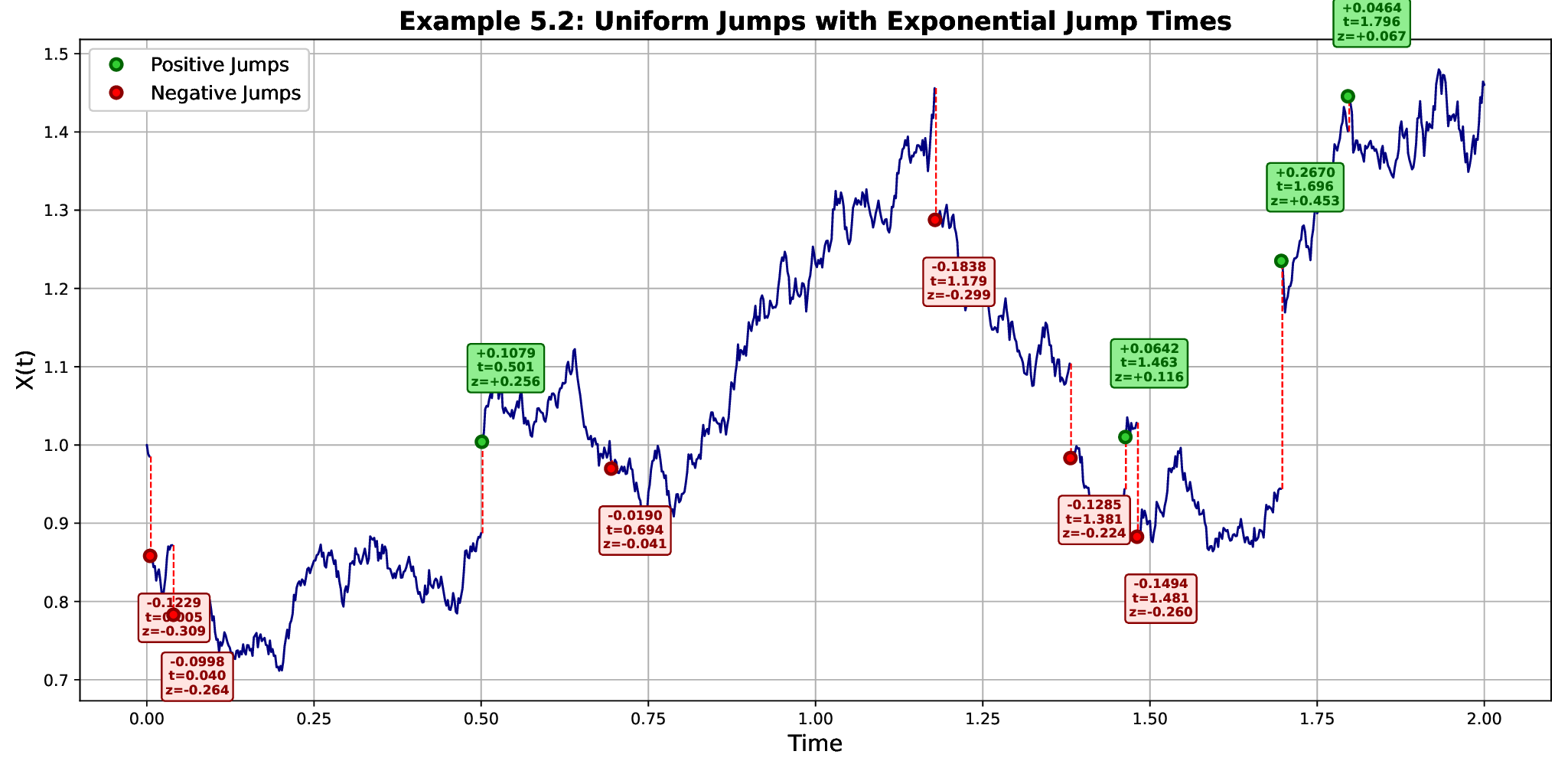}}
            \caption{\scriptsize On top parameters set to  $\lambda = 10, T = 1, N = 2^{12}$  Example\hspace{1mm}\ref{E1}.\\}
             {\scriptsize Jumps occur at times by an exponential distribution and jump sizes are in a uniform distribution $(-1/2, 1/2)$ }
 \label{EX1-Figure_1}
\end{figure}

\begin{figure}[H]

             \subfloat{\includegraphics[width=1\textwidth, keepaspectratio]{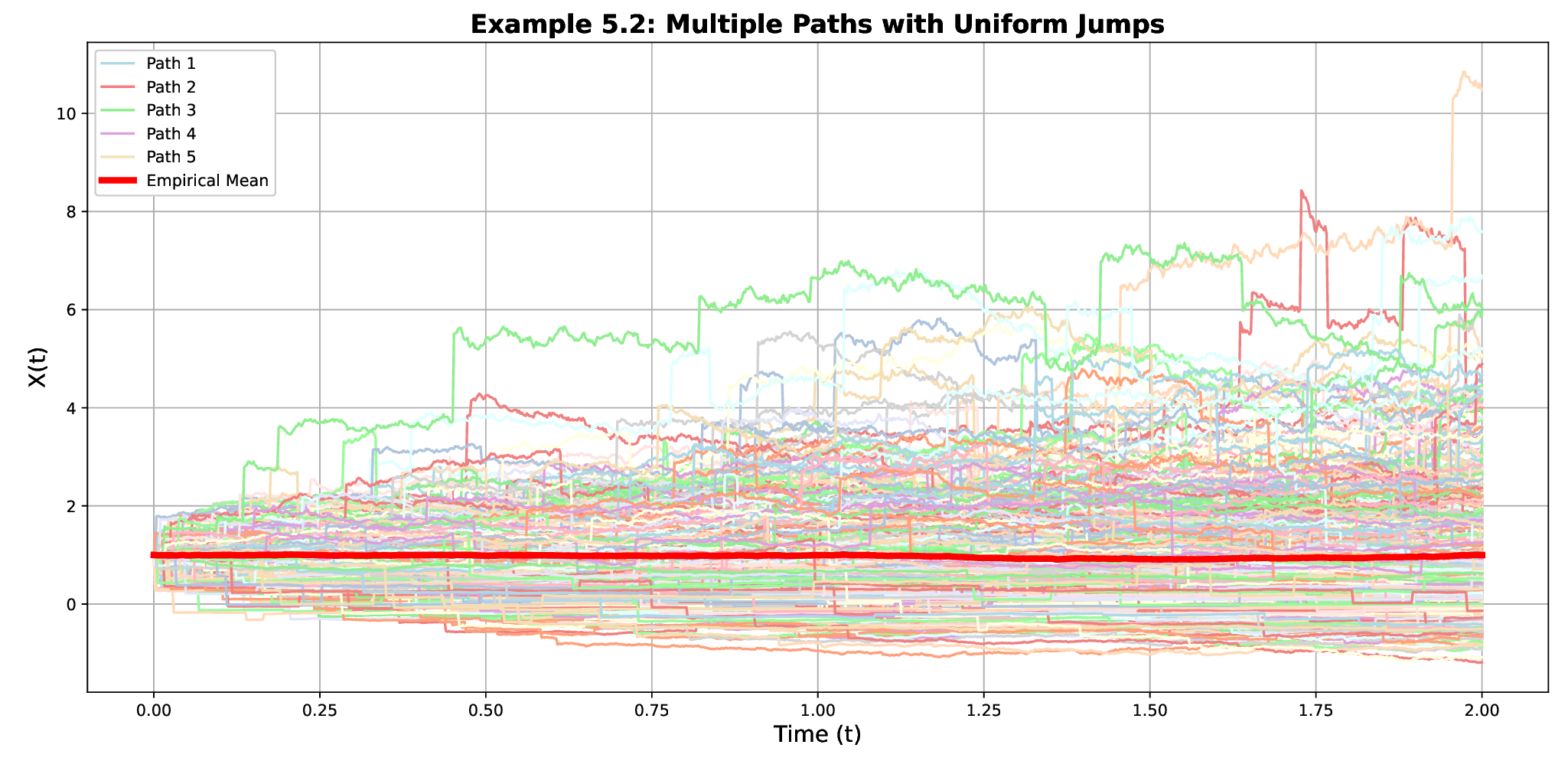}}
             \caption{\scriptsize On top parameters set to  $\lambda = 10, T = 1, N = 2^{12}$  Example\hspace{1mm}\ref{E1}.\\}
             {\scriptsize Jumps occur at times by an exponential distribution and jump sizes are in Uniform distribution $(-1/2, 1/2)$ }
 \label{EX1-Figure_2}
\end{figure}

\begin{figure}[H]

             \subfloat{\includegraphics[width=1\textwidth, keepaspectratio]{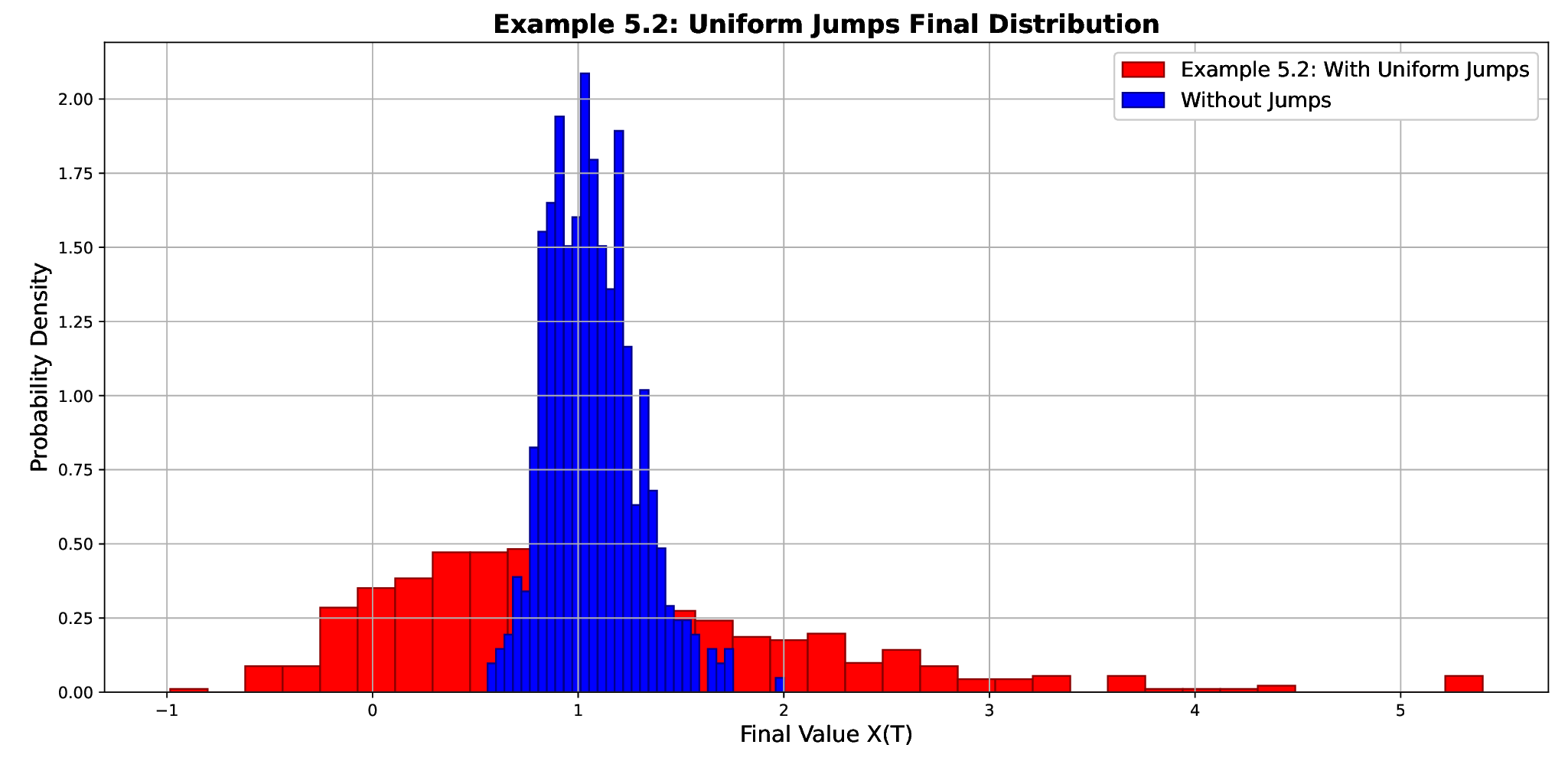}}
             \caption{\scriptsize On top parameters set to  $\lambda = 10, T = 1, N = 2^{12}$  Example\hspace{1mm}\ref{E1}.\\}
             {\scriptsize Jumps occur at times by an exponential distribution and jump sizes are in Uniform distribution $(-1/2, 1/2)$ }
 \label{EX1-Figure_4}
\end{figure}
\noindent It therefore follows in the present framework that $\varrho(z) = \max\{1, |z|^{-1}\}$. When $\hat{u}$ is specified in accordance with relation~\eqref{uhat}, one obtains
\begin{align*}
\hat{u}(r, z)
= (1+c)\frac{Y_r^x |z|^2}{Y_s^x \,\partial_z \lambda(z)\, \mathfrak{A}}
\left(
\frac{1}{s}\,\mathbf{1}_{\{r \leq s\}}
-
\frac{1}{t-s}\,\mathbf{1}_{\{s < r \leq t\}}
\right)
= C(s) {|z|^2}.
\end{align*}

\noindent where \(C(\cdot)\) is a bounded predictable function on \([0,1]\). We proceed to examine whether
\begin{align*}
\hat{u} \in \mathcal{V}^{\infty-}
\quad\Longleftrightarrow\quad
\|\nabla_z \hat{u}\|_{\mathcal{L}^2_p} < \infty 
\text{ and }
\|\hat{u}\varrho\|_{\mathcal{L}^2_p} < \infty,
\quad \forall\, p\ge 1.
\end{align*}
\noindent For the uniform law $\kappa(z)$, we have
\begin{align*}
\log \kappa(z) = 
\begin{cases}
0, & |z| < 1/2,\\[2pt]
-\infty, & |z|\ge 1/2,
\end{cases}
\qquad 
\nabla \log \kappa(z)=0 \text{ for } |z|<1/2.
\end{align*}
\noindent Therefore, the condition $|\nabla \log \kappa(z)|\le C\varrho(z)$ holds trivially inside the support.
Hence, the density $\kappa(z)$ is bounded and integrable.

\noindent The subsequent step entails the rigorous verification of the \(\mathcal{V}^p\)--norm conditions:
\paragraph{(i) The gradient term}
\noindent  Since $\nabla_z \hat{u}(s,z) = 2C(s)\,z,$ we have \(|\nabla_z \hat{u}(s,z)| = 2|C(s)||z|\).
Using the definition of the norm in \(\mathcal{L}^2_p\),
\begin{align*}
\|\nabla_z \hat{u}\|_{\mathcal{L}^2_p}^p
= \mathbb{E}\!\left[\left(\int_{\mathbb{R}_0} |\nabla_z \hat{u}(s,z)|^2
  \kappa(z)\,dz\right)^{p/2}\right].
\end{align*}
Since \(|C(s)|\) is bounded and \(\kappa(z)=\mathbf{1}_{\{|z|< 1/2\}}\),
\begin{align*}
\int_{\mathbb{R}_0} |\nabla_z \hat{u}(s,z)|^2 \kappa(z)\,dz
\le 4\,\|C\|_\infty^2 \!\int_{|z|< 1/2}\! z^2\,dz
= \frac{4\|C\|_\infty^2}{12}
<\infty.
\end{align*}
Hence, \(\|\nabla_z \hat{u}\|_{\mathcal{L}^2_p} < \infty\) for all \(p\ge 1\).

\paragraph{(ii) The weighted term}

\noindent Compute
\begin{align*}
|\hat{u}(s,z)|\varrho(z)
= |C(s)|\,|z|^2\,\max(1,|z|^{-1})
= |C(s)| \times
\begin{cases}
|z|, & |z| < 1,\\
|z|^2, & |z| \ge 1.
\end{cases}
\end{align*}
\noindent Since \(|z|\le 1/2\) on the support of \(\kappa\), only the first case applies
\begin{align*}
|\hat{u}(s,z)|\varrho(z) = |C(s)|\,|z|.
\end{align*}
\noindent Then
\begin{align*}
\|\hat{u}\varrho\|_{\mathcal{L}^2_p}^p
&= \mathbb{E}\!\left[\left(\int_{\mathbb{R}_0} 
   |\hat{u}(s,z)\varrho(z)|^2 \kappa(z)\,dz\right)^{p/2}\right] \\
&\le \mathbb{E}\!\left[\left(|C(s)|^2
   \int_{|z|\le 1/2} z^2\,dz\right)^{p/2}\right]
= \Bigl(\tfrac{|C(s)|^2}{12}\Bigr)^{p/2}
< \infty.
\end{align*}
\noindent Thus, \(\|\hat{u}\varrho\|_{\mathcal{L}^2_p} < \infty\) for all \(p\ge1\). Both parts of the norm are finite for each \(p\ge1\); therefore, $\hat{u}\in \mathcal{V}^{\infty-}$.

\begin{figure}[H]

             \subfloat{\includegraphics[width=1\textwidth, keepaspectratio]{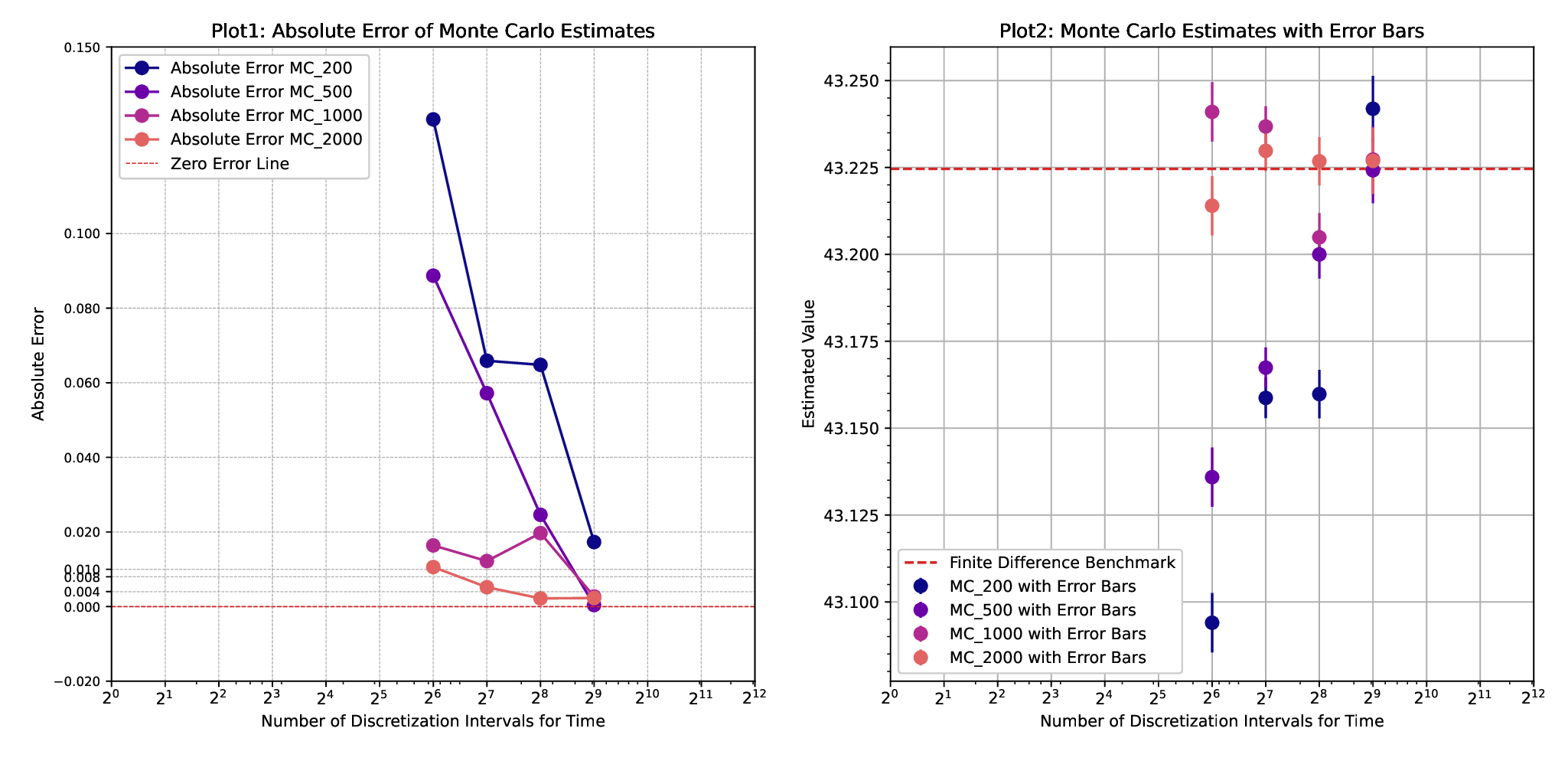}}
            \caption{\scriptsize Overview of Errors in Example\hspace{1mm}\ref{E1}.\\}
             {\scriptsize Analysis of MMC methods: (Plot1) Absolute error showing reduction with increased discretization and the number of Monte Carlo simulations; (Plot2) MMC estimates with error bars, underscoring variability and the reliability of the methods used. }
 \label{EX1-Error}
\end{figure}
\noindent The plots provide an in-depth analysis of the Malliavin Monte Carlo (MMC) method for pricing American options. In Figure {\rm \ref{EX1-Error}}, Plot 1 illustrates the absolute error between MMC estimates and the finite difference benchmark, revealing a pronounced decrease in error as both the number of discretization intervals and the number of simulations increase. This trend underscores the importance of the Law of Large Numbers, highlighting the need for sufficient sample sizes for enhanced estimation accuracy. Plot 2 displays the MMC estimates with error bars illustrating their variability. These error bars reflect how both the choice of sampling size and time discretization influence the stability of the estimates.

\begin{table}[H]
\caption{Evaluations of American Option Pricing in Context of Example\ref{E1}.}
	\begin{adjustbox}{max width=\textwidth}
		\begin{tabular}{ccccc|c}
			\toprule
			\textbf{Malliavin Monte-Carlo}	& \textbf{MC = 200}	& \textbf{MC = 500}     & \textbf{MC = 1000}     & \textbf{MC = 2000}     & \textbf{Finite Difference }\\
			\midrule
\multirow[m]{1}{*}{N = 64}	& 43.094			& 43.1359			& 43.2510			& 43.2040   &43.2246\\
			  	                   
                   \midrule
\multirow[m]{1}{*}{N = 128}    & 43.1587			& 43.1674			& 43.2568			& 43.2398   &43.2246\\

		\midrule	
\multirow[m]{1}{*}{N = 256}    & 43.1598			& 43.00			& 43.4049			& 43.2268   &43.2246\\

		\midrule	  	                 
\multirow[m]{1}{*}{N = 512}    & 43.2719			& 43.2242			& 43.2073			& 43.2289    &43.2246\\

			\bottomrule
		\end{tabular}
	\end{adjustbox}
	\noindent{\scriptsize{Data:$X^x = 1, K = 40$.}}
\end{table}
\end{example}


\begin{example}\label{E2}
\noindent  Let us examine the following mean-field stochastic differential equations with jumps, where the underlying process is initialized at \(X_0 ^x= x_0\):
\begin{align*}
dX_s ^x &= a\mathbb{E}[X_s ^x]ds + bX_s ^x dW_s + \int_{\mathbb{R}_0} |z|^2 \tilde{N}(dz, ds),
\end{align*}
\noindent where \(a\) and  \(b\) are constants. We set \(a = 1\), \(b = 0.5\), and \(x_0 = 1\). In this example, we focus on the Kou jump model. Specifically, we consider a L\'evy measure $\nu(dz) = \kappa(z)\,dz$, where
\begin{align*}
\kappa(z) = \lambda \big( p \eta_1 e^{-\eta_1 z} \mathbf{1}_{\{z>0\}} + (1-p)\eta_2 e^{\eta_2 |z|} \mathbf{1}_{\{z<0\}} \big),
\end{align*}
\noindent is the density of the Kou process, with parameters $\lambda > 0$, $p \in (0,1)$, and $\eta_1, \eta_2 > 0$.
\begin{figure}[H]

             \subfloat{\includegraphics[width=1\textwidth, keepaspectratio]{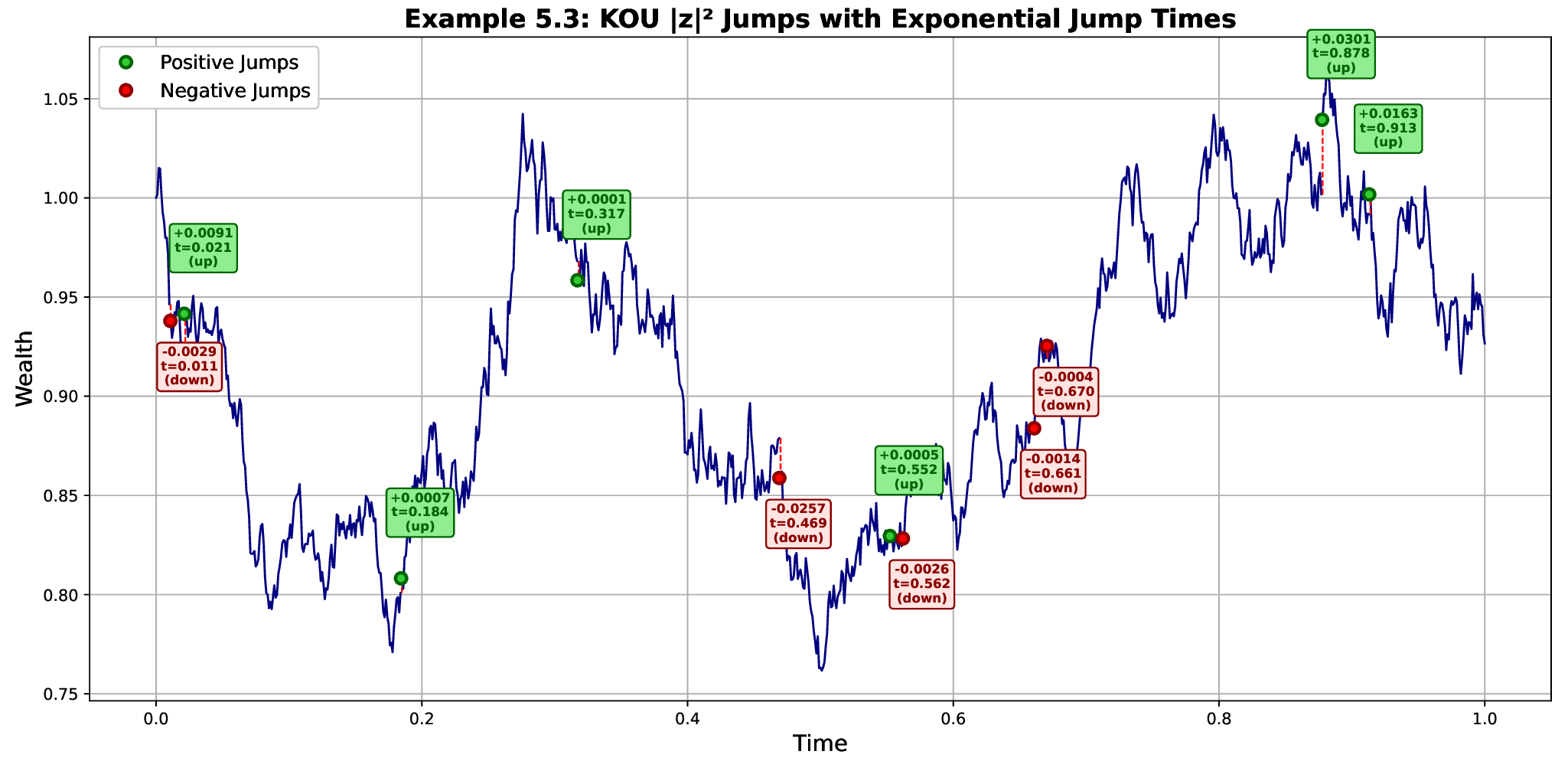}}

           \hspace{20mm}  \caption{\scriptsize On top parameters set to  $\lambda = 10, T = 1, N = 2^{12}$  Example\hspace{1mm}\ref{E2}.\\}
             {\scriptsize Jumps occur at times by an exponential distribution, and jump sizes are in the Kou model }
 \label{EX2-Figure_1}
\end{figure}
\unskip
\begin{figure}[H]

             \subfloat{\includegraphics[width=1\textwidth, keepaspectratio]{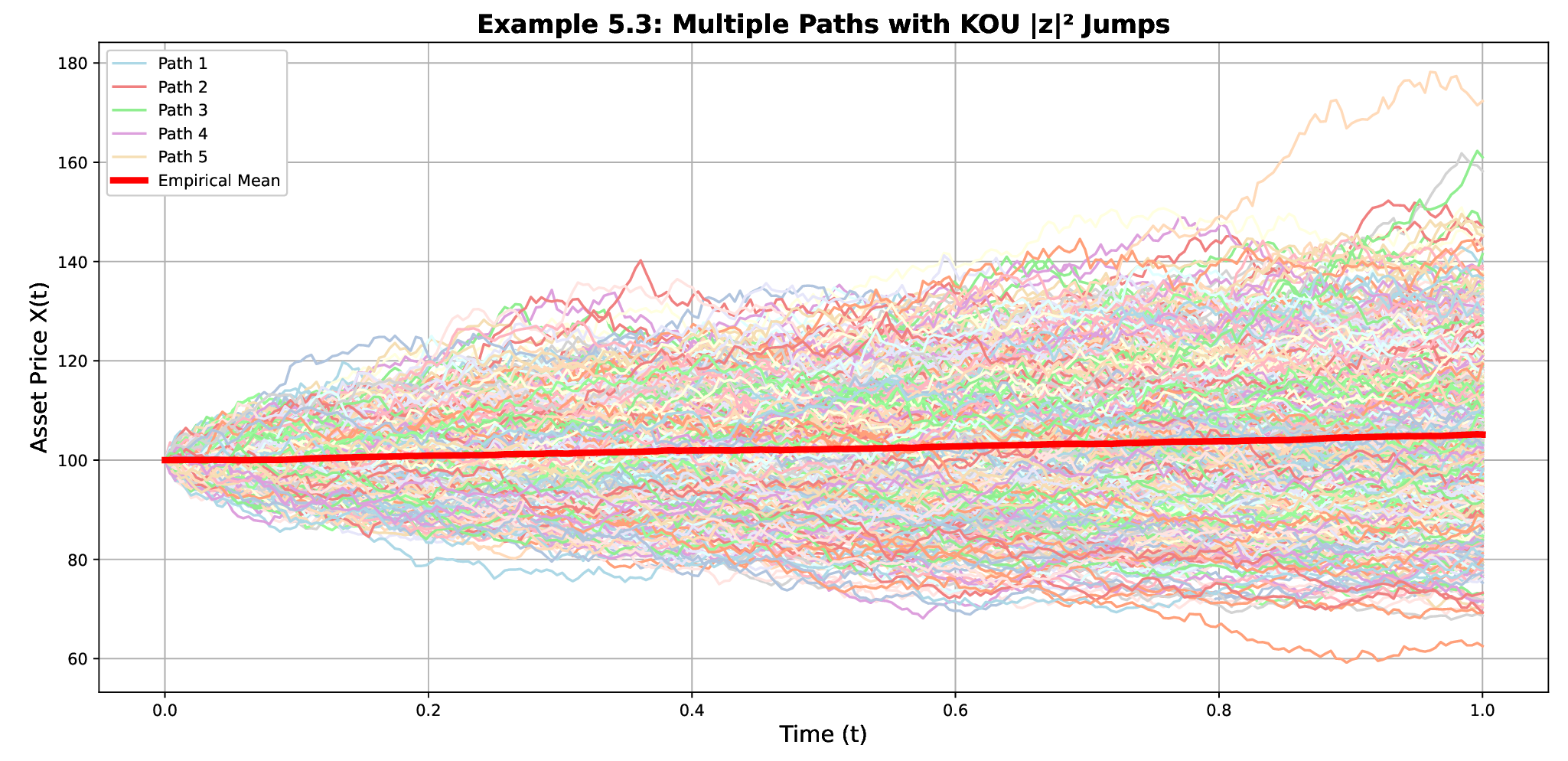}}
             \caption{\scriptsize On top parameters set to  $\lambda = 10, T = 1, N = 2^{12}$  Example\hspace{1mm}\ref{E2}.\\}
             {\scriptsize Jumps occur at times by an exponential distribution, and jump sizes are in the Kou model }
 \label{EX2-Figure_2}
\end{figure}
\begin{figure}[H]

             \subfloat{\includegraphics[width=1\textwidth, keepaspectratio]{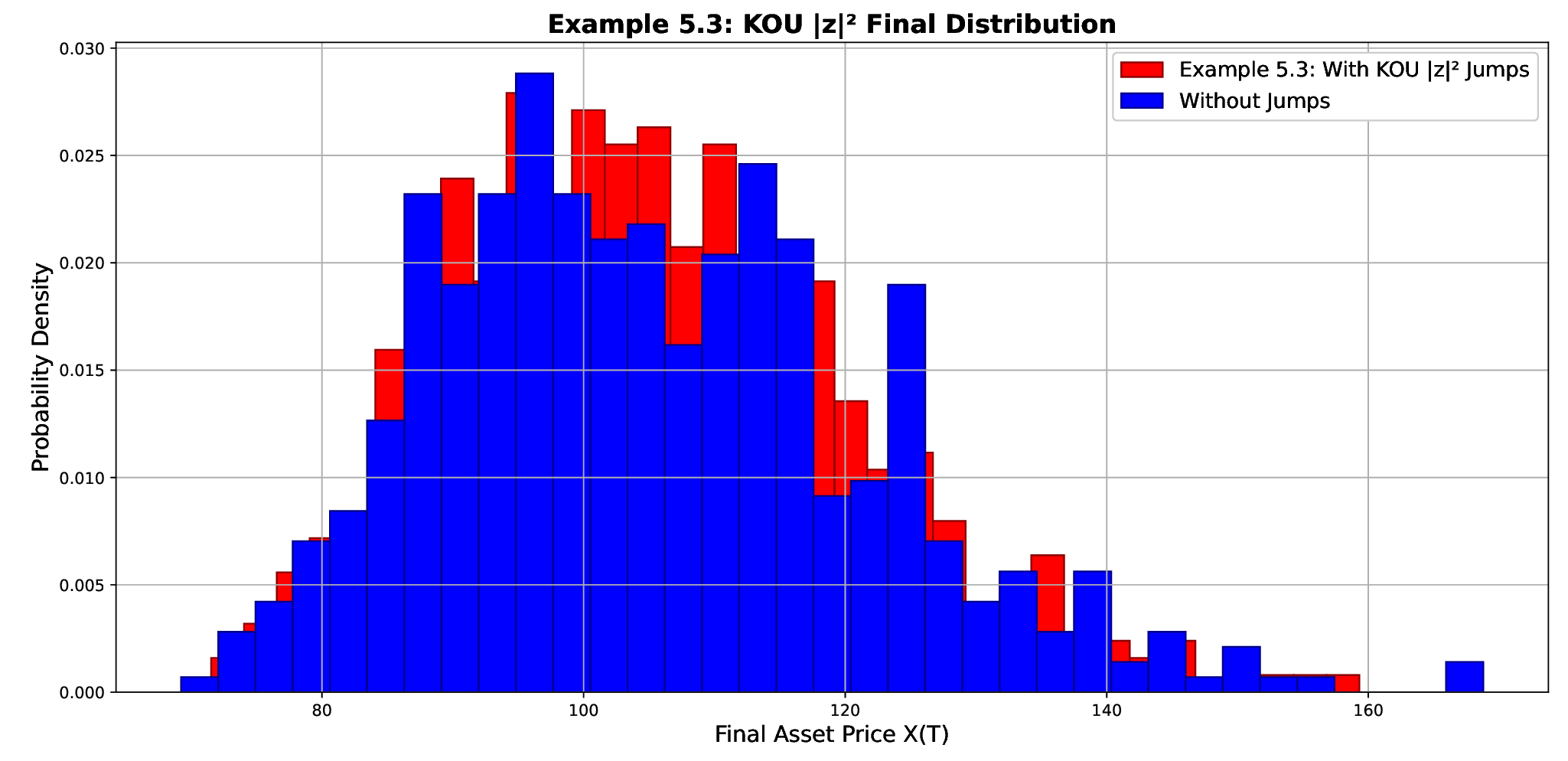}}

             \caption{\scriptsize On top parameters set to  $\lambda = 10, T = 1, N = 2^{12}$  Example\hspace{1mm}\ref{E2}.\\}
             {\scriptsize Jumps occur at times by an exponential distribution, and jump sizes are in the Kou model}
 \label{EX2-Figure_4}
\end{figure}
\noindent For the moment, we aim to verify that, in this setting, the function $\hat{u}$ satisfies the structural conditions required by the problem. Accordingly, we aim to establish that $\hat{u} \in \mathcal{V}^{\infty-}$ in the case of the Kou model, where we employ the definition
\begin{align*}
\varrho(z) = 1 \vee \operatorname{dist}(z, \Theta^c)^{-1}, \quad \Theta_0 = \mathbb{R}_0 \implies \operatorname{dist}(z, \Theta^c) = |z|.
\end{align*}
Hence, in this setting we have $\varrho(z) = \max\{1, |z|^{-1}\}.$ By selecting $\hat{u}$ in accordance with relation \eqref{uhat}, we observe that
\begin{align*}
\hat{u}(s, z)
= \frac{Y_r^x (1 \wedge |z|^2)}{Y_s^x \,\partial_z \lambda\, \mathfrak{A}}
\left(
\frac{1}{s}\,\mathbf{1}_{\{r \leq s\}}
-
\frac{1}{t-s}\,\mathbf{1}_{\{s < r \leq t\}}
\right)
= C(s)\,\frac{ (1 \wedge |z|^2)}{\partial_z \lambda(z)}.
\end{align*}
\noindent Suppose that $|\partial_z \lambda| \sim |z| \quad \text{as } z \to 0 \quad \text{(for instance, when } \lambda(z) = |z|^2 \text{)},$ which implies that
\begin{align*}
\hat{u}(s, z) \approx C(s)\,|z|
\quad \text{for } z \text{ in a neighbourhood of } 0.
\end{align*}
\noindent In the sequel, we shall establish the validity of $\hat{u} \in \mathcal{V}^{\infty-}:$
\begin{itemize}
\item[(i)] \text{Gradient term \(\|\nabla_z \hat{u}\|_{\mathcal{L}_{p} ^2}\): We know}
  \begin{align*}
    \nabla_z \hat{u} = C(s) \cdot sgn(z), \quad |\nabla_z \hat{u}| \leq K.
  \end{align*}
  \noindent Since \(\int_{\mathbb{R}_0} \nu(dz) < \infty\) (Kou's \(\kappa(z)\) is integrable),
  \begin{align*}
    \|{\nabla}_z \hat{u}\|_{\mathcal{L}_{p} ^2}
    &\leq \mathcal{C} \left( \mathbb{E}\left[ \left( \int_{\mathbb{R}_0} \nu(dz) \right)^{p/2} \right] \right)^{1/p}
       + \mathcal{ C}\left( \mathbb{E}\left[ \int_{\mathbb{R}_0} \nu(dz) \right] \right)^{1/p}
    < \infty.
  \end{align*}
\item[(ii)] \text{Weighted norm \(\|\hat{u} \varrho\|_{\mathcal{L}_{p} ^2}\):}
Compute \(|\hat{u} \varrho|\) in the form
  \[
    |\hat{u}(s,z)| \,\varrho(z)
    = |C(s)|\,|z| \cdot \max(1, |z|^{-1})
    = |C(s)| \cdot
    \begin{cases}
      |z|, & \text{if } |z| \geq 1, \\
      1,   & \text{if } |z| < 1.
    \end{cases}
  \]
  Now, evaluate \(\|\hat{u} \varrho\|_{\mathcal{L}^{p}}\) as follows.
  \[
    \int_{\mathbb{R}_0} |\hat{u} \varrho|^p \,\nu(dz)
    \leq K^p \left(
      \int_{|z| \geq 1} |z|^p \kappa(z)\,dz
      + \int_{|z| < 1} \kappa(z)\,dz
    \right).
  \]

For \(|z| \geq 1\) we know
    \begin{align*}
      &\int_{|z| \geq 1} |z|^p \kappa(z)\,dz < \infty \qquad \text{(exponential decay dominates polynomial growth),}
    \end{align*}

\text{and for \(|z| < 1\)}
    \[
      \int_{|z| < 1} \kappa(z)\,dz < \infty
      \quad \text{(Kou's \(\kappa(z)\) is bounded near \(0\)).}
    \]
\end{itemize}
\noindent Thus, \(\|\hat{u} \varrho\|_{L^{p}} < \infty\) for all \(p \geq 1\).
\begin{figure}[H]

             \subfloat{\includegraphics[width=1\textwidth, keepaspectratio]{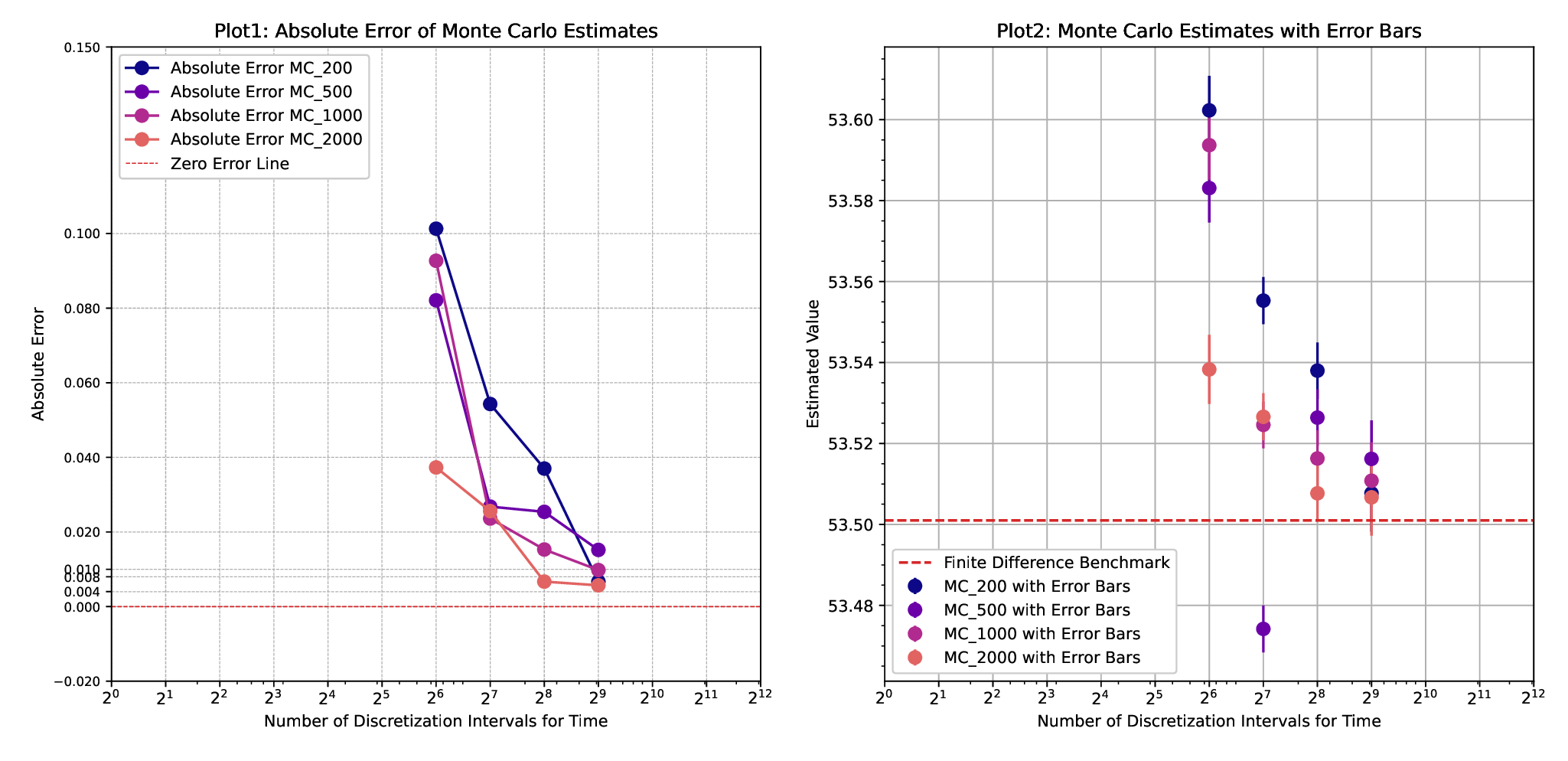}}
            \caption{\scriptsize Overview of Errors in Example\hspace{1mm}\ref{E2}.\\}
             {\scriptsize Analysis of MMC methods: (Plot1) Absolute error showing reduction with increased discretization and the number of Monte Carlo simulations; (Plot2) MMC estimates with error bars, underscoring variability and the reliability of the methods used. }
 \label{EX2-Error}
\end{figure}

\noindent The plots provide an in-depth analysis of the Malliavin Monte Carlo (MMC) method for pricing American options. In Figure {\rm \ref{EX1-Error}}, Plot 1 illustrates the absolute error between MMC estimates and the finite difference benchmark, and Plot 2 displays the MMC estimates with error bars illustrating their variability. 

\begin{table}[H]
\caption{Evaluations of American Option Pricing in Context of Example\ref{E2}.}
	  \begin{adjustbox}{max width=\textwidth}
		\begin{tabular}{ccccc|c}
			\toprule
			\textbf{Malliavin Monte-Carlo}	& \textbf{MC = 200}	& \textbf{MC = 500}     & \textbf{MC = 1000}     & \textbf{MC = 2000}     & \textbf{Finite Difference }\\
			\midrule
\multirow[m]{1}{*}{N = 64}	& 53.6023			& 53.5831			& 53.5937			& 53.5383			& 53.5010\\
			  	                   
                   \midrule
\multirow[m]{1}{*}{N = 128}    & 53.55533		& 53.4742			& 53.5246			& 53.5266     & 53.5010\\

		\midrule	
\multirow[m]{1}{*}{N = 256}    & 53.5380			& 53.5264			& 53.5163			& 53.5077      & 53.5010\\

		\midrule	  	                 
\multirow[m]{1}{*}{N = 512}    & 53.5077			& 53.5162			& 53.5108		& 53.5067       & 53.5010\\

			\bottomrule
		\end{tabular}
	\end{adjustbox}
	\noindent{\scriptsize{Data: $x = 10, K = 60$.}}
\end{table}

\end{example}

\begin{example}\label{EX3}
\textbf{Portfolio optimization in mean-field SDEs with jumps}\\
\noindent  Consider a two-asset portfolio optimization problem where asset prices follow mean-field jump-diffusion processes. Let $X_1^x$ and $X_2^y$ represent the prices of two risky assets with initial values $X_0^x = x_0 = 1$ and $X_0^y = y_0 = 10$:

\begin{align*}
&dX_s^x = \mu_1(X_s^x, \mathbb{E}[X_s^x])ds + \sigma_1 X_s^x dW_s^1 + \int_{\mathbb{R}} \lambda_1(t, X_t ^x, z, \mathbb{E}[X_t ^x]) \tilde{N}_1(dz, ds),  \\
&dX_s^y = \mu_2(X_s^y, \mathbb{E}[X_s^y])ds + \sigma_2 X_s^y dW_s^2 + \int_{\mathbb{R}} \lambda_2(t, X_t ^y, z, \mathbb{E}[X_t ^x]) \tilde{N}_2(dz, ds),  
\end{align*}

\noindent where
\begin{itemize}
    \item $\mu_i(x, m) = a_i x + b_i m + c_i$ denotes the mean-field drift function, where $a_i, b_i, c_i \in \mathbb{R}$ are structural parameters governing the linear dependence on current asset price and collective market behavior
    \item $\sigma_i > 0$ represents the diffusion coefficients capturing the continuous volatility dynamics of each asset
    \item $W^1$ and $W^2$ constitute independent standard Brownian motions driving the stochastic fluctuations in asset prices
    \item $\tilde{N}_i(dz, ds)$ are compensated Poisson random measures with Lévy densities $\nu_i(dz) = \kappa(z)dz$, where $\kappa(z)$ follows a uniform distribution for Asset 1 and a Kou double-exponential distribution for Asset 2, respectively
    \item $\lambda_1(t, X_t ^x, z, \mathbb{E}[X_t ^x]) = c z (\mathbb{E}[X_s^x] + X_{s^-}^x)$ and $\lambda_2(t, X_t ^y, z, \mathbb{E}[X_t ^y]) = |z|^2$ specify the state-dependent jump intensities, where $\lambda_1$ exhibits linear dependence on both the mean-field and pre-jump asset values, while $\lambda_2$ demonstrates quadratic scaling with jump magnitude
\end{itemize}

\begin{figure}[h]

             \centering
\begin{adjustbox}{width=1.3\textwidth,center}
            \includegraphics{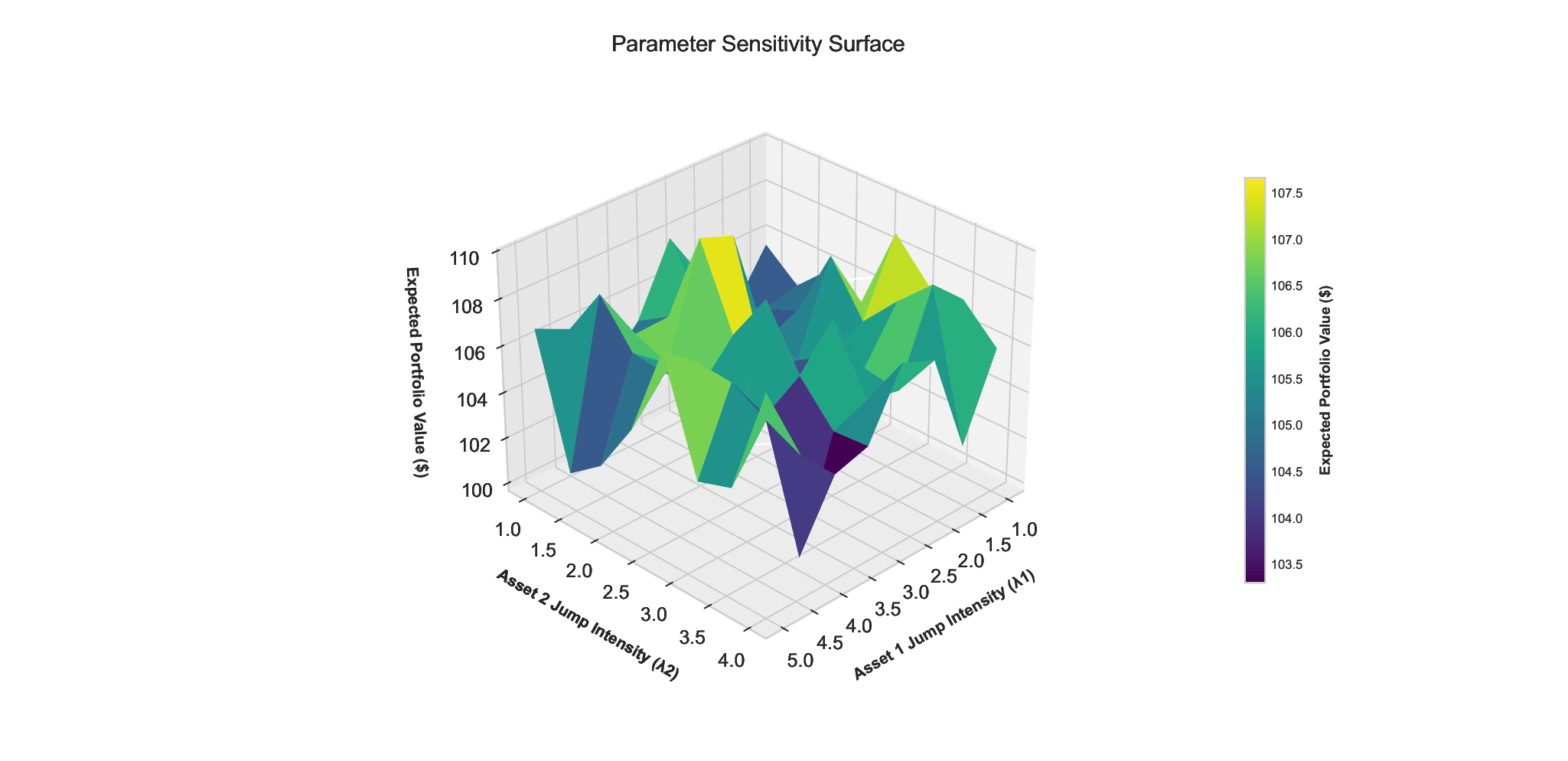}
\end{adjustbox}
             \caption{\scriptsize On top parameters set to  $\lambda = 1, T = 1, N = 2^{12}$  Example\hspace{1mm}\ref{EX3}.\\}
             {\scriptsize Asset1:$ X^x = 100$, jump sizes: Uniform$(-1/2, 1/2)$ -Asset2: $X^y =100$, jump sizes= Kou Model.}

 \label{EX3-Figure_1}
\end{figure}

\noindent The portfolio value at time $t$ is given by:
\begin{equation*}
P_t = w_1 X_t^x + w_2 X_t^y,
\end{equation*}

\noindent where $(w_1, w_2)$ are portfolio weights satisfying $w_1 + w_2 = 1$. We analyze three allocation strategies:
\begin{itemize}
    \item \textbf{Conservative}: $(w_1, w_2) = (0.2, 0.8)$
    \item \textbf{Balanced}: $(w_1, w_2) = (0.5, 0.5)$
    \item \textbf{Aggressive}: $(w_1, w_2) = (0.8, 0.2)$
\end{itemize}

\begin{figure}[H]

             \subfloat{\includegraphics[width=1.1\textwidth, keepaspectratio]{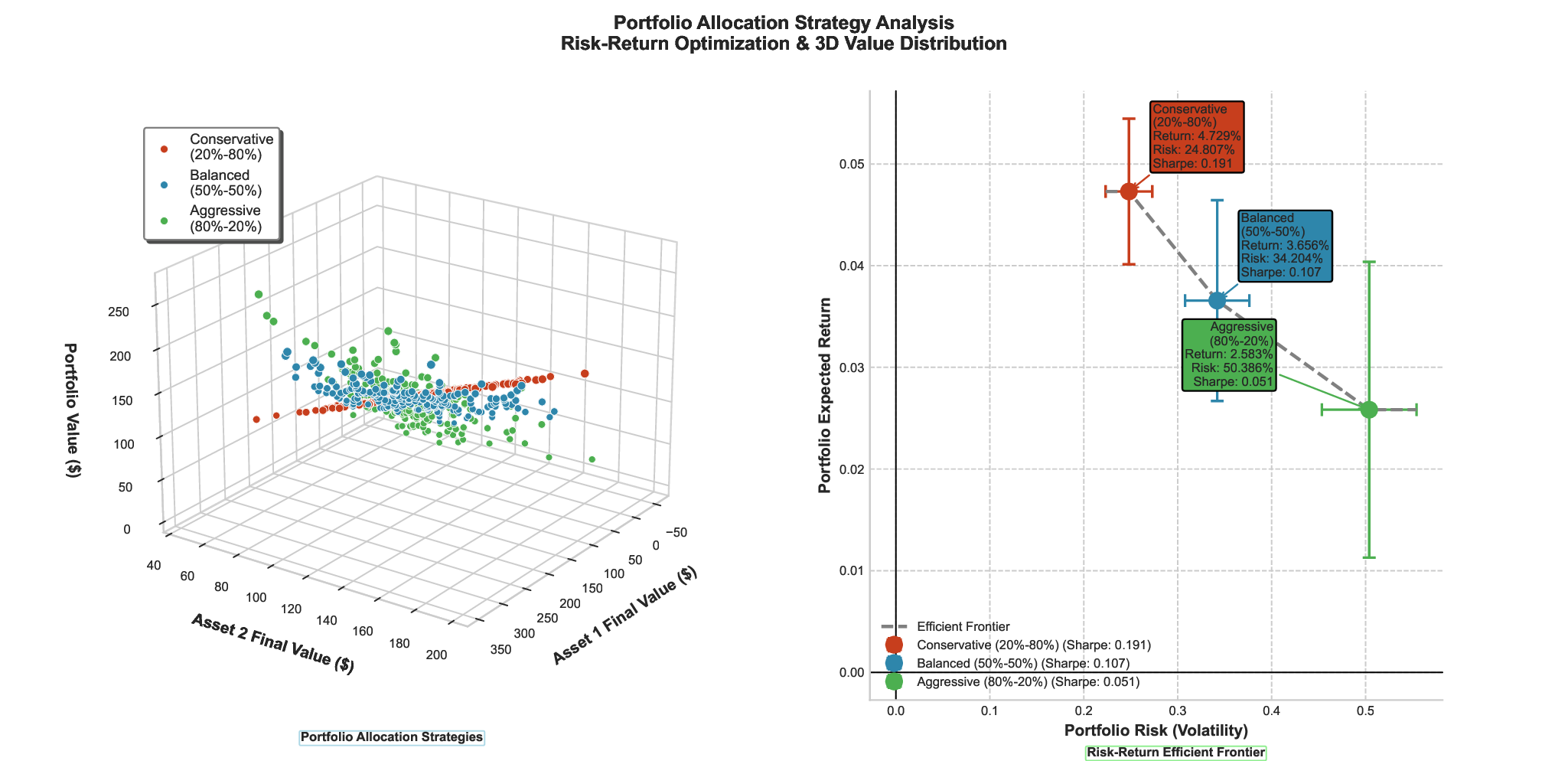}}

             \caption{\scriptsize Analyzing three allocation strategies.}
            
 \label{EX3-Figure_2}
\end{figure}
\noindent The mean-field component $\mathbb{E}[X_s^i]$ captures collective market behavior, creating interdependencies between individual asset dynamics and aggregate market states. Jump intensities $\lambda_i$ and magnitudes $\eta_{\text{up}}, \eta_{\text{down}}$ model sudden price movements due to market shocks or news arrivals.
\begin{figure}[H]

             \subfloat{\includegraphics[width=1\textwidth, keepaspectratio]{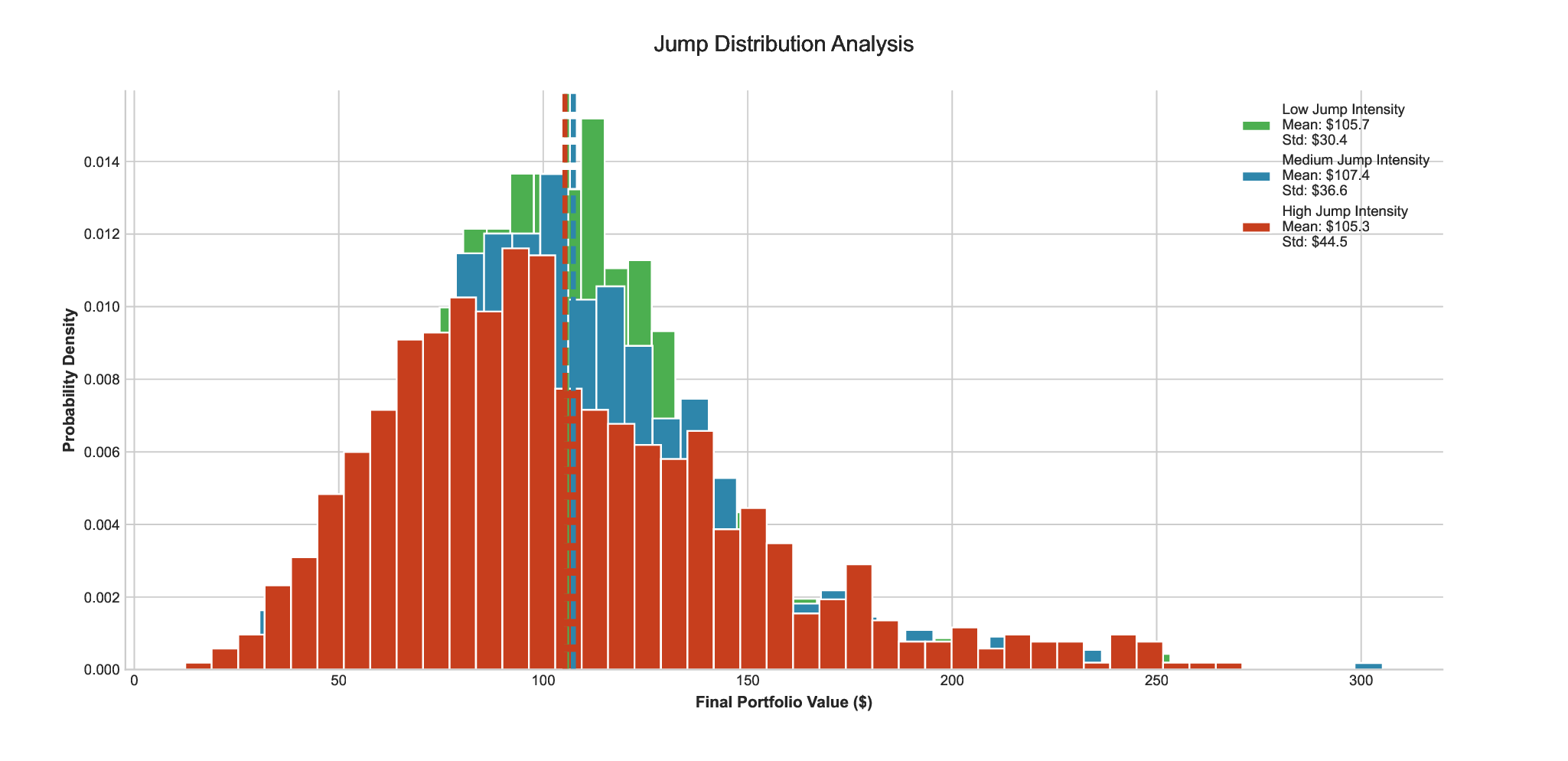}}

             \caption{\scriptsize On top parameters set to  $\lambda = 1, T = 1, N = 2^{12}$  Example\hspace{1mm}\ref{EX3}.\\}
             {\scriptsize Asset1:$ X^x = 100$, jump sizes: Uniform$(-1/2, 1/2)$ -Asset2: $X^y =100$, jump sizes= Kou Model.}
 \label{EX3-Figure_2}
\end{figure}

\begin{figure}[H]

             \subfloat{\includegraphics[width=1\textwidth, keepaspectratio]{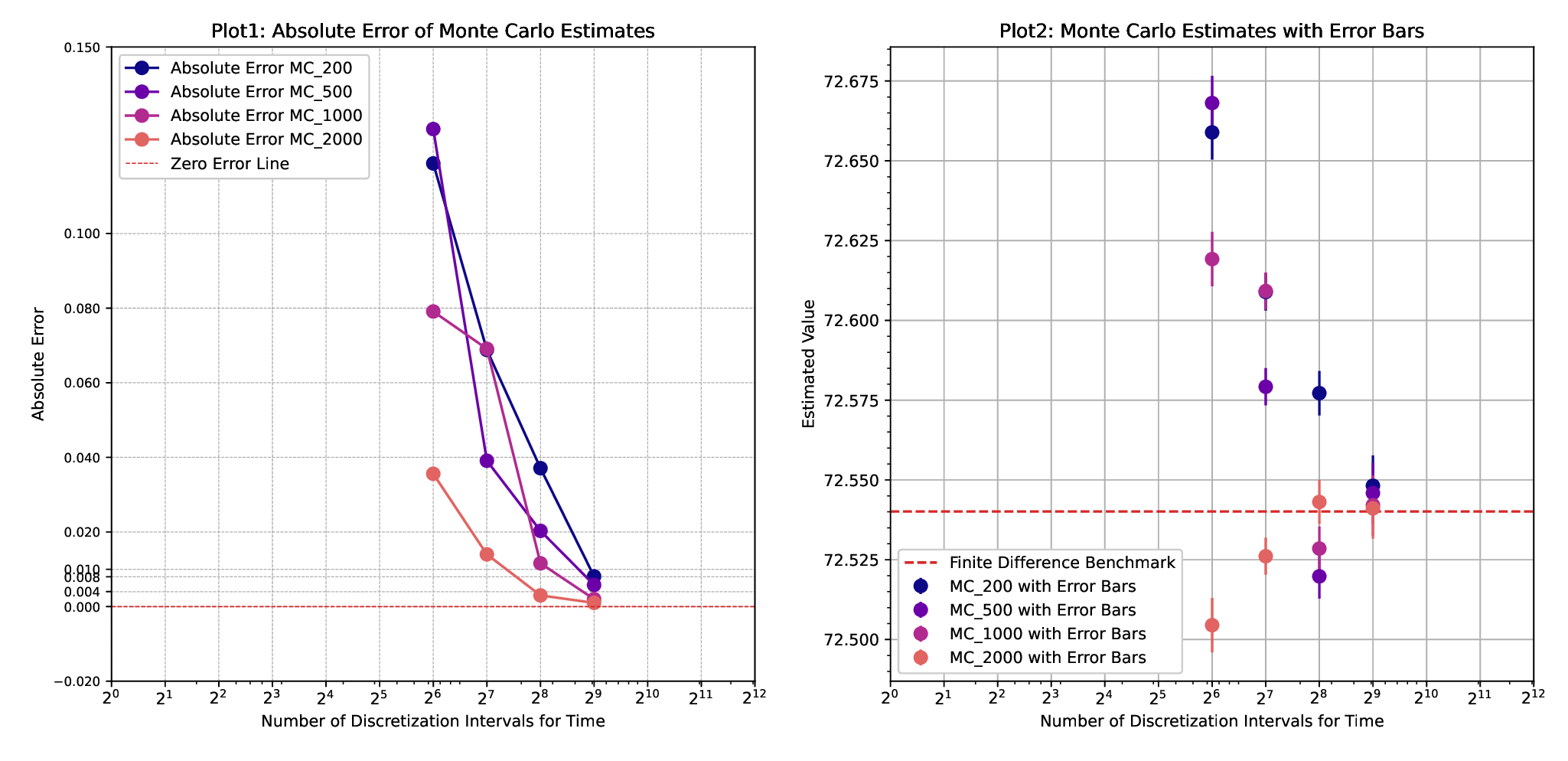}}
            \caption{\scriptsize Overview of Errors in Example\hspace{1mm}\ref{EX3}.\\}
             {\scriptsize Analysis of MMC methods: (Plot1) Absolute error showing reduction with increased discretization and the number of Monte Carlo simulations; (Plot2) MMC estimates with error bars, underscoring variability and the reliability of the methods used. }
 \label{EX3-Error}
\end{figure}

\begin{table}[H]
\caption{Evaluations of American Option Pricing in Context of a Portfolio Containing Two Assets Example \ref{EX3}.}
	 \begin{adjustbox}{max width=\textwidth}
		\begin{tabular}{ccccc|c}
			\toprule
			\textbf{Malliavin Monte-Carlo}	& \textbf{MC = 200}	& \textbf{MC = 500}     & \textbf{MC = 1000}     & \textbf{MC = 2000}     & \textbf{Finite Difference }\\
			\midrule
\multirow[m]{1}{*}{N = 64}	& 72.6589			& 72.6681			& 72.6192			& 72.5045      &72.5401\\
			  	                   
                   \midrule
\multirow[m]{1}{*}{N = 128}    & 72.6089			& 72.5792			& 72.6092			&72.5261     &72.5401\\

		\midrule	
\multirow[m]{1}{*}{N = 256}    & 72.5772			& 72.5198			& 72.5285			& 72.5431      &72.5401\\

		\midrule	  	                 
\multirow[m]{1}{*}{N = 512}    &72.5482			& 72.5459			& 72.5421			& 72.5411      &72.5401\\

			\bottomrule
		\end{tabular}
      \end{adjustbox}
	\noindent{\scriptsize{Datas: $X^x = 1, X^y = 10, K = 80$.}}
\end{table}
\end{example}
\appendix

\section{}\label{appendixA}
\subsection{Malliavin Derivative Operator}

\noindent The Malliavin calculus for jump processes presented in this section provides a rigorous framework for analyzing the differential properties of functionals involving both Wiener processes and Poisson processes, offering insights into their stochastic behavior and functional dependencies.\\
We begin by rigorously defining the core functional space that underpins Malliavin calculus in the Poisson spaces. The subsequent definitions and derived properties are essential building blocks for constructing the tailored Malliavin calculus framework we are developing here. \\
Define 
\( \mathcal{L}^2_p \) be the space of all predictable processes \( f : \Omega \times [0, 1] \times \Theta_0 \to \mathbb{R}^d \) with finite norm:
\begin{align}\label{LPnorm}
\|f\|_{\mathcal{L}^2_p}^p & :=\mathbb{E}\left[\int_0^1 \int_{\Theta_0} |f(s, z)|^2 \nu(dz) ds\right]^{p/2}+\mathbb{E}\left[\int_0^1 \int_{\Theta_0} |f(s, z)|^p \nu(dz) ds\right] < \infty,
\end{align}

\noindent \( \mathcal{H}^p \) be the space of all measurable adapted processes \( h : \Omega \times [0, 1] \to \mathbb{R}^d \) with finite norm:
\begin{align*}
\|h\|_{\mathcal{H}^p} & := \left[\mathbb{E}\left[\int_0^1 |h(s)|^2 ds\right]^{p/2}\right]^{1/p} < \infty, 
\end{align*}
and 
\( \mathcal{V}^p \) be the space of all predictable processes \( v_1 : \Omega \times [0, 1] \times \Theta_0 \to \mathbb{R}^d\) with finite norm:
\begin{align*}
\|v_1\|_{\mathcal{V}^p} & := \|\nabla_z v_1\|_{\mathcal{L}^2_p} + \|v_1\varrho\|_{\mathcal{L}^2_p} < \infty.
\end{align*}
Denote 
\begin{align*}
\mathcal{H}^{\infty -} & := \bigcap_{p \geq 1} \mathcal{H}^p, \quad \mathcal{V}^{\infty -} := \bigcap_{p \geq 1} \mathcal{V}^p.
\end{align*}
\noindent Let \( m \) be an integer. We define \( C_p^\infty(\mathbb{R}^m) \) as the set of all smooth functions on \( \mathbb{R}^m \) such that every derivative exhibits polynomial growth. Let \( \mathcal{F}C_p^\infty \) denote the class of Wiener-Poisson functionals on the probability space \( \Omega \), structured as follows:

\begin{align*}
F & = f(W(h_1), \ldots, W(h_{m_1}), N(g_1), \ldots, N(g_{m_2})),
\end{align*}
where \(f \in C_p^\infty(\mathbb{R}^{m_1+m_2})\), \(h_1, \ldots, h_{m_1} \in \mathcal{H}_0\) and \(g_1, \ldots, g_{m_2} \in \mathcal{V}_0\) are non-random and real-valued, and
\begin{align*}
W(h_i) & := \int_0^1 \langle h_i(s), dW_s \rangle_{\mathbb{R}^d}, \\
N(g_i) & := \int_0^1 \int_{\Theta_0} g_j(s, z) N(ds, dz).
\end{align*}
\noindent For any \(p > 1\) and \(\vartheta = (h, v_0) \in \mathcal{H}^p \times \mathcal{V}^p\), consider the following definition
\begin{align*}
D_\vartheta F & := \sum_{i=1}^{m_1} (\partial_i f)(\cdot) \int_0^1 \langle h(s), h_i(s) \rangle_{\mathbb{R}^d} ds + \notag \\
& \quad \sum_{j=1}^{m_2} (\partial_{j+m_1} f)(\cdot) \int_0^1 \int_{\Theta_0} \langle v_0(s, z), \nabla_z g_j(s, z) \rangle_{\mathbb{R}^d} N(ds, dz),
\end{align*}
where \((\cdot)\) stands for \(W(h_1), \ldots, W(h_{m_1}), N(g_1), \ldots, N(g_{m_2})\).
Denote $D^w F$ and $D_{r,z}F$ as the Malliavin derivative of F in the Wiener space and the Poisson space, respectively. In accordence with Chapters 4, and 11 in \cite{ref-nualart2018}, this notations confirm that $$D_\vartheta F =  \int_0^1 \langle h(s), D^w_sF \rangle_{\mathbb{R}^d} ds + \int_0^1 \int_{\Theta_0} \langle v(s, z), D_{s,z}F  \rangle_{\mathbb{R}^d} N(ds, dz).$$
\noindent \textbf{Definition 2.1}: For \(p > 1\) and \(\vartheta = (h, v) \in \mathcal{H}^p \times \mathcal{V}^p\), we define the first order Sobolev space \(W^{1, \vartheta, p}\) being the completion of \(\mathcal{F}C_p^\infty\) in \(\mathcal{L}^p(\Omega)\) with respect to the norm:
\begin{align*}
\|F\|_{\vartheta; 1, p} & := \|F\|_{\mathcal{L}^{p}} + \|D_\vartheta F\|_{\mathcal{L}^{p}}.
\end{align*}

\noindent The duality formula stated below (cf. \cite{ref-song2015}, Theorem 2.9).

\noindent 
\begin{theorem}\label{IBF}{\rm [Duality Formula]} Given \(\vartheta = (h, v) \in \mathcal{H}^{\infty-} \times \mathcal{V}^{\infty-}\) and \(p > 1\), for any \(F \in W^{1, \vartheta, p}\), we have
\begin{align*}
E[D_\vartheta F] & = -E[F \delta(\vartheta)],
\end{align*}
where
\begin{align*}
\delta(\vartheta) & := -\int_0^1 \langle h(s), dW_s \rangle + \int_0^1 \int_{\Theta_0} \frac{ \text{div}(\kappa v)( s, z)}{\kappa(z)} \tilde{N}(ds, dz),
\end{align*}
and \(\text{div}(\kappa v) := \sum_{i=1}^d \partial_{z_i} (\kappa v_i)\) stands for the divergence.
\end{theorem}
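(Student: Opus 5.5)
The statement to prove is the duality formula (Theorem \ref{IBF}). My plan is to establish it first on the smooth cylindrical class $\mathcal{F}C_p^\infty$ and then extend it to $W^{1,\vartheta,p}$ by density. Because $D_\vartheta$ is linear in $\vartheta$ and $\delta(\vartheta)$ splits as $\delta(h,0)+\delta(0,v)$, I will treat the Wiener direction $h$ and the Poisson direction $v$ separately and add the results.

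\textbf{Wiener part.} For the Wiener part the identity $\mathbb{E}[\int_0^1\langle h(s),D^w_sF\rangle ds]=\mathbb{E}[F\int_0^1\langle h,dW_s\rangle]$ is the classical Gaussian integration by parts. For deterministic $h$ I would obtain it by Girsanov: shift $W$ by $\varepsilon\int_0^\cdot h$, differentiate in $\varepsilon$ at $0$, and note that the Poisson measure is unaffected. For adapted $h\in\mathcal{H}^{\infty-}$ I would use the standard fact that the Skorohod integral of an adapted process is the Itô integral. The sign convention $\delta(\vartheta)=-\int\langle h,dW\rangle+\dots$ then gives $\mathbb{E}[D_{(h,0)}F]=-\mathbb{E}[F\delta(h,0)]$.

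\textbf{Poisson part.} For the Poisson part I would use the flow of $v$ on the jump sizes. Take a predictable $v\in\mathcal{V}^{\infty-}$ and let $\theta_\varepsilon$ move each atom $(s,z)$ of $N$ to $(s,\Phi^v_\varepsilon(s,z))$, where $\Phi^v_\varepsilon$ solves $\partial_\varepsilon\Phi=v(s,\Phi)$. The weight $\varrho$ in the $\mathcal{V}^p$ norm, together with the bound $|\nabla\log\kappa|\le\mathcal{C}\varrho$ from \eqref{eqt2}, guarantees that for small $\varepsilon$ the flow stays inside $\Theta_0$. This is the anticipating (predictable) quasi-invariance argument of \cite{ref-song2015}. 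Next I apply a Girsanov theorem for random measures: the image law of $N$ under $\theta_\varepsilon$ is absolutely continuous with density
\[
\mathcal{E}_\varepsilon=\exp\Big(\int\!\!\int\log\big(J_\varepsilon\kappa(\Phi_\varepsilon)/\kappa\big)\,N(ds,dz)-\int\!\!\int\big(J_\varepsilon\kappa(\Phi_\varepsilon)/\kappa-1\big)\kappa\,dz\,ds\Big),
\]
where $J_\varepsilon$ is the Jacobian determinant. Differentiating at $\varepsilon=0$ uses $\partial_\varepsilon J_\varepsilon|_0=\mathrm{div}\,v$, so that $\partial_\varepsilon\mathcal{E}_\varepsilon|_0=\int\!\!\int \mathrm{div}(\kappa v)/\kappa\,\tilde N(ds,dz)$. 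On the other hand, $\partial_\varepsilon F\circ\theta_\varepsilon|_0$ for $F=f(\dots,N(g_j),\dots)$ equals $\sum_j\partial_{j+m_1}f\int\!\!\int\langle v,\nabla_z g_j\rangle N$, which is exactly the Poisson part of $D_\vartheta F$. Since $\mathbb{E}[F\circ\theta_\varepsilon]=\mathbb{E}[F\mathcal{E}_\varepsilon]$, differentiating at $\varepsilon=0$ gives the Poisson identity; with the $-\int\langle h,dW\rangle$ sign convention for the Wiener part, the two pieces combine into the stated $E[D_\vartheta F]=-E[F\delta(\vartheta)]$. A simpler alternative I would try first is a direct computation in which, conditionally on the number of atoms, the atoms are i.i.d. and one integrates by parts in $z$ against $\kappa(z)dz$. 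The boundary terms vanish because $v\varrho\in\mathcal{L}^2_p$ forces $\kappa v\to0$ at $\partial\Theta$. Predictability of $v$ handles the $s$-dependence, since the Skorohod integral reduces to the $\tilde N$ integral.

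\textbf{Main obstacle and extension.} The step I expect to be hardest is justifying the interchange of $\partial_\varepsilon$ and $\mathbb{E}$. This requires uniform $\mathcal{L}^q$ bounds on $\partial_\varepsilon\mathcal{E}_\varepsilon$ and on $\partial_\varepsilon(F\circ\theta_\varepsilon)$ for small $\varepsilon$, which I would obtain from BDG inequalities using the norms $\|\nabla_z v\|_{\mathcal{L}^2_p}$ and $\|v\varrho\|_{\mathcal{L}^2_p}$ for all $p$. Finally, I would extend from $\mathcal{F}C_p^\infty$ to $W^{1,\vartheta,p}$ using $\delta(\vartheta)\in\mathcal{L}^{q}$ for all $q$, which follows from $\vartheta\in\mathcal{H}^{\infty-}\times\mathcal{V}^{\infty-}$ and BDG. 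By Hölder both sides of the identity are continuous in the $\|\cdot\|_{\vartheta;1,p}$ norm, so they pass to the completion.
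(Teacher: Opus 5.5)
The paper gives no proof of this statement. It simply quotes it from \cite{ref-song2015} (Theorem 2.9). Your plan is the standard route to that result: Gaussian integration by parts in $h$, Bismut-type quasi-invariance under a flow on the jump sizes in $v$, then closure in $F$. As written, however, two steps fail.

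\textbf{Orientation of the Girsanov step.} With $\rho_\varepsilon=J_\varepsilon\,\kappa(\Phi_\varepsilon)/\kappa$, your $\mathcal{E}_\varepsilon$ is not the density of the law of the moved configuration. For deterministic $v$, the substitution $y=\Phi_\varepsilon(z)$ gives $\mathbb{E}[F\mathcal{E}_\varepsilon]=\mathbb{E}[F\circ\theta_{-\varepsilon}]$, equivalently $\mathbb{E}[F]=\mathbb{E}[\mathcal{E}_\varepsilon\,(F\circ\theta_\varepsilon)]$.

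\begin{itemize}
\item The second form is also what the Girsanov theorem for random measures delivers when $v$ is random and predictable. In that case there is no fixed map whose image law you could compute. Instead, under $\mathcal{E}_\varepsilon\cdot\mathbb{P}$ the moved random measure has compensator $\nu(dz)\,ds$, and $W$ is still a Brownian motion.
\item Differentiating your identity $\mathbb{E}[F\circ\theta_\varepsilon]=\mathbb{E}[F\mathcal{E}_\varepsilon]$ yields $\mathbb{E}[D_vF]=+\mathbb{E}[F\int\!\!\int\frac{\mathrm{div}(\kappa v)}{\kappa}\,\tilde N]$. Together with $\mathbb{E}[D_hF]=\mathbb{E}[F\int\langle h,dW\rangle]$, this does \emph{not} combine into $\mathbb{E}[D_\vartheta F]=-\mathbb{E}[F\delta(\vartheta)]$. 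Your claim that the two pieces combine is therefore false as written.
\item The test case $F=N(g)$ with deterministic $v$ confirms the minus sign: $\mathbb{E}[D_vF]=\int\!\!\int\langle v,\nabla_z g\rangle\kappa\,dz\,ds=-\int\!\!\int g\,\mathrm{div}(\kappa v)\,dz\,ds$.
\item Differentiating $\mathbb{E}[F]=\mathbb{E}[\mathcal{E}_\varepsilon(F\circ\theta_\varepsilon)]$ at $\varepsilon=0$ instead gives $0=\mathbb{E}[F\int\!\!\int\frac{\mathrm{div}(\kappa v)}{\kappa}\tilde N]+\mathbb{E}[D_vF]$. This is the correct Poisson half.
\end{itemize}

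\textbf{The flow for a general $v\in\mathcal{V}^{\infty-}$.} The $\mathcal{V}^p$ norms only control integrals of $|\nabla_z v|$ and $|v|\varrho$ over $(\omega,s,z)$. They give no pointwise Lipschitz bound or boundedness. So $\partial_\varepsilon\Phi=v(s,\Phi)$ need not be solvable, and nothing keeps $\Phi_\varepsilon$ in $\Theta_0$ for small $\varepsilon$ uniformly in $(\omega,s,z)$. The bound $|\nabla\log\kappa|\le\mathcal{C}\varrho$ concerns $\kappa$, not the flow.

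What is missing is a density argument in $\vartheta$, not only in $F$:
\begin{itemize}
\item First prove the identity for predictable $v$ that are smooth in $z$, bounded with bounded derivatives, and supported in a fixed compact $K\subset\Theta_0$. Then $\nu(K)<\infty$ and $\Phi_\varepsilon$ is a diffeomorphism equal to the identity off $K$. Also $\mathcal{E}_\varepsilon$ is a true martingale, and differentiating under $\mathbb{E}$ is harmless.
\item Then approximate a general $v$ by $v_n=\chi_n v$ (after truncation and mollification), using cut-offs with $|\nabla\chi_n|\le C\varrho$. One gets $\|v_n-v\|_{\mathcal{V}^p}\to0$ because $|v\nabla\chi_n|\le C|v|\varrho\,\mathbf{1}_{\{\nabla\chi_n\neq0\}}$.
\item Both sides of the identity are continuous in $\|\cdot\|_{\mathcal{V}^p}$. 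For $\mathbb{E}[D_vF]$ this holds since $|v|\le|v|\varrho$. For $\delta(v)$ it follows from $|\mathrm{div}(\kappa v)/\kappa|\le C(|\nabla_z v|+\varrho|v|)$ together with BDG.
\end{itemize}
This is exactly where $\varrho$ and the condition on $\nabla\log\kappa$ enter.

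The same cut-off is what disposes of the boundary terms in your ``direct'' alternative. It is not that $\kappa v$ vanishes pointwise at $\partial\Theta_0$: an integrability bound on $v\varrho$ does not give that, and $\kappa$ may blow up at the origin. That alternative also needs two further ingredients. One is localization to the $\nu$-finite supports of the $g_j$, since the standing assumption allows $\nu(\Theta_0)=\infty$. The other is conditioning on $\mathcal{F}_a$ for simple predictable $v$.
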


\noindent Given \(\vartheta = (h, v) \in \mathcal{H}^{\infty-} \times \mathcal{V}^{\infty-}\) and \(p > 1\), the operator \(D_\vartheta F\) on \(W^{1, \vartheta, p}\) in the weak topology of \(\mathcal{L}^p(\Omega)\) (\cite{ref-song2018}, Lemma  2.3), and according to Proposition 2.5 in \cite{ref-song2018} we have 
\begin{align*}
	D_\vartheta F & = 0 \quad \text{almost surely on } \{F = 0\}.
\end{align*}
In addition, the chain rule (\cite{ref-song2018}, Lemma 2.4) holds, i.e, 
for Lipschitz continuous function \(\phi = (\phi_1, \ldots, \phi_k) : \mathbb{R}^m \to \mathbb{R}^k\) and  \(F = (F_1, \ldots, F_m)\) with \(F_i \in W^{1, \vartheta, p}\), \(i = 1, 2, \ldots, m\), 
\begin{align*}
	D_\vartheta (\phi(F)) & = \nabla \phi(F) (D_\vartheta F_1, \ldots, D_\vartheta F_m) .
\end{align*}
Herein, we note some approximations needed to establish the conditional expectation. Approximate the indicator function \(\frac{1}{2} \mathbf{1}_{(-\epsilon, \epsilon)}\) by a sequence of smooth bounded functions \(\{\Phi_{\epsilon,n}\}_{n \geq 1}\) such that each \(\Phi_{\epsilon,n}\) has support contained in \((-\epsilon - \epsilon/n, \epsilon + \epsilon/n)\), equals 1 on \([-\epsilon + \epsilon/n, \epsilon - \epsilon/n]\), and connects values at the endpoints by linear segments on \((-\epsilon - \epsilon/n, -\epsilon + \epsilon/n)\) and \((\epsilon - \epsilon/n, \epsilon + \epsilon/n)\).  

\noindent Furthermore, we define the approximate cumulative function  
\begin{equation*}  
	\Psi_{\epsilon,n}(x) := \int_{-\infty}^x \Phi_{\epsilon,n}(y) \, dy + \epsilon c,
\end{equation*} 
which will be instrumental in employing the duality formula that links the Skorokhod integral with Malliavin derivatives. 
The sequence \((\Psi_{\epsilon,n})_{n}\) providing an approximation to the mollifier function \(\mathcal{H}_\epsilon : \mathbb{R} \to \mathbb{R}\) defined by  
\begin{equation*}  
	\mathcal{H}_\epsilon(x) =   
	\begin{cases}  
		\epsilon c, & x < -\epsilon, \\
		\frac{1}{2}(x + \epsilon) + \epsilon c, & -\epsilon \leq x < \epsilon, \\
		\epsilon + \epsilon c, & x \geq \epsilon,  
	\end{cases}  
\end{equation*}  
where \(c\) is a constant to be specified.  Obviously, $\frac{1}{\epsilon}  \mathcal{H}_\epsilon(x) \rightarrow \mathcal{H}(x)$ as $\epsilon \rightarrow 0$.

\section{Proof of Theorem \ref{th2}}\label{appendixB}
\noindent First, let us consider a Borel measurable function \( f \in C'_b \), such that we can apply Theorem \eqref{th1}. Due to the properties of conditional expectations and relation \eqref{condition2}, we have in representation \eqref{eq7} that
\begin{align*}
\mathbb{E} \Big[ f'(F) &\mathcal{H}(G - a) \int_{0}^{T}\int_{\Theta_0}^{}\langle u(t, z), D_{t, z} F \rangle_{\mathbb{R}^d}  \,{N}(dt, dz) \Big] \notag \\
&= \mathbb{E} \left[ \mathbb{E} \left[ f'(F) \mathcal{H}(G - a) \int_{0}^{T}\int_{\Theta_0}^{}\langle u(t, z), D_{t, z} F \rangle_{\mathbb{R}^d} \, {N}(dt, dz) \bigg| \sigma(F, G) \right] \right] = 0.
\end{align*}
Thus, we obtain representation \eqref{eqt9}.
\noindent Now, consider a Borel measurable function \( f \) for which \( f(F) \in \mathcal{L}^2(\Omega) \). Let \( C_K^{\infty}(\mathbb{R}) \) and \( C_K^{\infty}(\Omega) \) be the spaces of infinitely differentiable functions with compact support, defined respectively in \( \mathbb{R} \) and \( \Omega \). Since \( C_K^{\infty}(\Omega) \) is dense in \( \mathcal{L}^2(\Omega) \), there exists a sequence of functions \( f_n \in C_K^{\infty}(\mathbb{R}) \), such that \( f_n(F) \) converges to \( f(F) \) in \(\mathcal{L}^2(\Omega) \). In part i), we concluded that for any function \( f_n \) in this sequence, representation \eqref{eqt9} holds. By convergence arguments, we conclude that expression \eqref{eqt9} also holds for the limit function \( f \) as follows. For any \( f_n \), we denote
\begin{align*}
g_n(\alpha) := \mathbb{E}[f_n(F) | G = \alpha] = \frac{\mathbb{E}[ \mathcal{H}(G - a) f_n(F)\delta(u)]}{\mathbb{E}[\mathcal{H}(G - a) \delta(u)]}.
\end{align*}
In addition, we define
\begin{align*}
g(\alpha) & := \frac{\mathbb{E}[\mathcal{H}(G - \alpha) \delta(u)f(F)]}{\mathbb{E}[\mathcal{H}(G - \alpha) \delta(u)]}.
\end{align*}
Using the Cauchy–Schwarz inequality, we derive that
\begin{align*}
|g(\alpha) - g_n(\alpha)| &\leq \frac{\mathbb{E} \left[ |f(F) - f_n(F)| |\mathcal{H}(G - \alpha) \delta(u)| \right]}{\mathbb{E}[\mathcal{H}(G - \alpha) \delta(u)]} \notag \\
& \leq \frac{\mathbb{E}[|f(F) - f_n(F)|^2]^{1/2} \mathbb{E}[|\mathcal{H}(G - \alpha) \delta(u)|^2]^{1/2}}{\mathbb{E}[\mathcal{H}(G - \alpha) \delta(u)]}.
\end{align*}
For any \( \alpha \in \mathbb{R} \), we have
\begin{align*}
\frac{\mathbb{E}[|\mathcal{H}(G - \alpha) \delta(u)|^2]^{1/2}}{\mathbb{E}[\mathcal{H}(G - \alpha) \delta(u)]} &< \infty.
\end{align*}
By the density argument, \( f_n(F) \) converges to \( f(F) \) in the \(\mathcal{L}^2\)-sense, hence we obtain that
\begin{align*}
|g(\alpha) - g_n(\alpha)| & \to 0 \text{ as } n \to \infty, \text{ for all } \alpha \in \mathbb{R}.
\end{align*}
Thus, \( g_n(\alpha) \) converges to \( g(\alpha) \). Moreover, \( g_n(\alpha) \) converges to \( \mathbb{E}[f(F) | G = \alpha] \) by the conditional dominated convergence theorem. Therefore, we conclude that \( g(\alpha) \) equals the latter conditional expectation.


\section{Algorithm of Pricing American Options}\label{appendixC}
\noindent Consider an American option expiring at $T$, which confers the right to exercise at any time within $[t,T]$. We model the underlying asset by a mean-field SDE with jump $X$ on $\mathbb{R}^d$ and write $\Phi(X_s)$ for the payoff stream. The arbitrage-free price at time $t$ is characterized by the optimal stopping problem
\begin{align*}
\mathcal{P}(t,x)
= \sup_{\tau \in \mathcal{T}_{t,T}}
\mathbb{E}_{t,x}\!\left[
  e^{-\int_t^{\tau} r_s\,\mathrm{d}s}\,\Phi(X_\tau)
\right],
\end{align*}
\noindent where $\mathcal{T}_{t,T}$ is the set of stopping times taking values in $[t,T]$, and $r$ denotes the deterministic short rate. By appealing to the Snell-envelope framework, the earliest optimal exercise time is identified as
\begin{align*}
\tau_t^{*}=\inf\{\, s \in [t,T] : \mathcal{P}(s,X_s)=\Phi(X_s) \,\}.
\end{align*}
The pricing function $\mathcal{P}(t,x)$ then satisfies the variational characterization: it solves
\begin{align*}
\big(\partial_t + \mathscr{L}\big)\mathcal{P}(t,x) - r\,\mathcal{P}(t,x) = 0,
\end{align*}
whenever $\mathcal{P}(t,x)$ exceeds the payoff $\Phi(x)$, and matches the terminal condition $\mathcal{P}(T,x)=\Phi(x)$. The operator $\mathscr{L}$ is the infinitesimal generator of $X$. Delivering a rigorous statement is nontrivial. The function $\mathcal{P}$ generally falls short of classical regularity ($C^{1}$ in time, $C^{2}$ in space). Its behavior in the vicinity of the free boundary $\{(t,x): \mathcal{P}(t,x)=\Phi(x)\}$ is particularly delicate, effectively injecting an extra term into the PDE. These features necessitate weak frameworks: the variational-inequality formulation of Bensoussan--Lions \citep{ref-bensoussan1984}, the viscosity formulation of El Karoui \citep{ref-el1997}, and the Sobolev formulation of Bally \citep{ref-bally2002}.\\
\noindent In practice, one computes the time-zero price $\mathcal{P}(0,x)$ via a Bellman dynamic programming scheme. Construct a time grid $0=t_0<t_1<\cdots<t_n=T$ over $[0,T]$ with uniform step $\varepsilon=T/n$, and approximate the process $(X_t)_{t\in[0,T]}$ by the discrete sequence $(\bar X_{t_k})_{k=0,\ldots,n}$, where $\bar X_{t_k} \approx X_{t_k}$. The value $\mathcal{P}(t_k,\bar X_{t_k})$ is then estimated by a sequence $\bar {\mathcal{P}}_{t_k}(\bar X_{t_k})$, defined recursively through the ensuing relation, see also Bally et al.\citep{ref-bally2005}.\\

\begin{theorem}
Given $\varepsilon = T/n \in (0,1)$, define
$\bar{\mathcal{P}}_{t_n}(\bar X_{t_n}) = \Phi(\bar X_{n\Delta t})$,
and for any $k = n-1, n-2, \ldots, 1, 0$,
\begin{align*}
\bar{\mathcal{P}}_{t_k}(\bar X_{t_k})
= \max\!\left\{
\Phi(\bar X_{t_k}),
\, e^{-r\varepsilon}\,
\mathbb{E}\!\left[
\bar{\mathcal{P}}_{t_{(k+1)}}\!\big(\bar X_{t_{(k+1)}}\big)
\;\middle|\; \bar X_{t_k}= \alpha
\right]
\right\}.
\end{align*}
Then $\bar{\mathcal{P}}_{t_k}(\bar X_{t_k}) \simeq \mathcal{P}\big(t_k, X_{t_k} \big)$.
\end{theorem}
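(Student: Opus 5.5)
The statement makes two claims. The first is the dynamic programming recursion for the discretized (Bermudan) problem. The second is an approximation claim, $\bar{\mathcal P}_{t_k}(\bar X_{t_k})\simeq \mathcal P(t_k,X_{t_k})$. I would split the proof along those lines.

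\textbf{Step 1: the Bermudan value.} Let $\mathcal T^{\varepsilon}_{t_k,T}$ denote the stopping times with values in the grid $\{t_k,\dots,t_n\}$. Define
\[
\mathcal P^{\varepsilon}(t_k,x)=\sup_{\tau\in\mathcal T^{\varepsilon}_{t_k,T}}\mathbb E_{t_k,x}\big[e^{-r(\tau-t_k)}\Phi(\bar X_\tau)\big].
\]
I would show that $\mathcal P^\varepsilon(t_k,\bar X_{t_k})$ coincides with $\bar{\mathcal P}_{t_k}(\bar X_{t_k})$.

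This is the discrete Snell envelope argument, done by backward induction.
\begin{itemize}
\item At $k=n$ the two sides agree because $\tau=T$ is forced.
\item At a step $k<n$, any grid stopping time either stops at $t_k$ or continues. If it continues, the tower property together with the Markov property of the Euler scheme $(\bar X_{t_k})$ bounds its value by the discounted continuation value
\[
e^{-r\varepsilon}\,\mathbb E\big[\bar{\mathcal P}_{t_{k+1}}(\bar X_{t_{k+1}})\,\big|\,\bar X_{t_k}=\alpha\big].
\]
\item Conversely, the first hitting time of the set where the value equals $\Phi$ attains the maximum.
\end{itemize}

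The mean-field coefficients $\rho,\pi,\eta$ enter only through the deterministic laws of $X_t$. For the scheme, these are frozen deterministic functions of time, computed from the particle or Euler law. So $\bar X$ is an inhomogeneous Markov chain, and the conditional expectation is a genuine function of $\alpha$.

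That function is exactly the quantity represented in Proposition \ref{th5}, with $f=\bar{\mathcal P}_{t_{k+1}}$, $s=t_k$ and $t=t_{k+1}$. The payoff $\Phi$ has linear growth and $\bar{\mathcal P}\le \sup\Phi$ for a put, so $f\in\mathcal L^2(\Omega)$ and the proposition applies.

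\textbf{Step 2: comparison with $\mathcal P$.} I would split the error as
\[
|\mathcal P-\bar{\mathcal P}|\le|\mathcal P-\mathcal P_{\mathrm{Berm}}|+|\mathcal P_{\mathrm{Berm}}-\mathcal P^\varepsilon|,
\]
where $\mathcal P_{\mathrm{Berm}}$ is the Bermudan value computed with the exact process $X$.

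For the second term, I would use that $\Phi$ is Lipschitz and that $|\sup a-\sup b|\le \sup|a-b|$. This gives the bound
\[
C\,\mathbb E\Big[\max_k|X_{t_k}-\bar X_{t_k}|\Big].
\]
This is controlled by the strong convergence of the Euler–Maruyama scheme for mean-field jump SDEs in \citep{ref-sun2021}, using the moment bound \eqref{xp}.

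For the first term, given a continuous-time stopping time $\tau$, I would round it up to the next grid point $\tau^\varepsilon$. This bounds the loss by
\[
\mathbb E\big[|\Phi(X_\tau)-\Phi(X_{\tau^\varepsilon})|\big]+O(\varepsilon)\le C\,\sup_\tau \mathbb E|X_{\tau^\varepsilon}-X_\tau|+O(\varepsilon).
\]
The Lipschitz and linear growth conditions of Assumption \ref{Assum1} then give an $L^2$ modulus of continuity of order $\varepsilon^{1/2}$. This holds even with jumps, since
\[
\mathbb E|X_{t+h}-X_t|^2\le Ch,
\]
using the Itô isometry for $\tilde N$ and the bound on $\zeta$.

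\textbf{Main obstacle.} I expect the main obstacle to be this last rounding estimate at stopping times in the presence of jumps and mean-field terms. There I would apply the strong Markov property of the flow, with the law frozen as deterministic, together with BDG on $[\tau,\tau+\varepsilon]$. Combining the two terms would give an error of $O(\sqrt\varepsilon)$, which is the precise meaning of $\simeq$.
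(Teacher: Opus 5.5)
The paper gives no argument for this statement. It presents the recursion as the standard Bellman/Snell-envelope scheme, refers to Bally et al.\ \citep{ref-bally2005}, and leaves ``$\simeq$'' undefined. Your proposal is correct in outline and supplies what the paper omits, so it is a genuinely different and more complete route.

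\textbf{What your route adds.} You identify $\bar{\mathcal P}_{t_k}(\bar X_{t_k})$ with the discrete Snell envelope of the Euler chain. You then split the error into an American/Bermudan term (rounding $\tau$ up to the grid) and a Bermudan/Euler term (Lipschitz payoff plus strong convergence). This turns the heuristic into an $O(\sqrt{\varepsilon})$ estimate. It also forces a point the paper never addresses: $\mathcal P(t_k,X_{t_k})$ equals the Snell envelope at an intermediate time only if $\mathbb{E}_{t,x}$ is read with the flow of laws entering $\rho,\pi,\eta$ frozen along the original solution. If the McKean--Vlasov dynamics were instead restarted from $\delta_x$ at time $t$, the recursion (which conditions the original process) would in general not compute $\mathcal P(t_k,\cdot)$ for $k\ge 1$. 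The price is extra structure: a Lipschitz payoff, a strong bound $\mathbb{E}\max_k|X_{t_k}-\bar X_{t_k}|\le C\sqrt{\varepsilon}$ from \citep{ref-sun2021}, and a scheme whose mean-field terms use a deterministic law.

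\textbf{Points to tighten.}

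Proposition \ref{th5} is stated for the exact solution of \eqref{eqt3}, with a weight built from its first-variation process $Y$. It does not represent conditional expectations of the Euler chain $\bar X$. Your remark there applies only to the continuation values of the intermediate $\mathcal P_{\mathrm{Berm}}$.

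``Particle law'' is not admissible in Step 1. With empirical mean-field terms a single path is not a Markov chain with deterministic coefficients, and you would need an additional propagation-of-chaos estimate.

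The bound $\bar{\mathcal P}\le\sup\Phi$ is only useful if $\bar X\ge 0$, which the jump Euler chain does not guarantee. Linear growth plus moment bounds suffice anyway.

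Your ``main obstacle'' is milder than you expect. Since $\mathbf 1_{(\tau,\tau^{\varepsilon}]}$ is predictable and $\tau^{\varepsilon}-\tau\le\varepsilon$, the It\^o isometries for $W$ and $\tilde N$ give $\mathbb{E}|X_{\tau^{\varepsilon}}-X_\tau|^2\le C\varepsilon$ directly, without the strong Markov property.

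What does need care is the passage from time $0$ to time $t_k$. Because $X$ jumps, $\sup_\tau|X_{\tau^{\varepsilon}}-X_\tau|$ is not small pathwise, so the rounding term cannot be dominated by a single random variable. You can handle it in either of two ways:
\begin{itemize}
\item Estimate it pointwise in the starting point $y$, where both values are evaluated at the same $X_{t_k}$.
\item Use that the conditional payoffs form an upward-directed family, so the expectation of the essential supremum equals the supremum of the expectations.
\end{itemize}
The Bermudan/Euler term causes no such trouble, since $L\,\mathbb{E}\bigl[\max_j|X_{t_j}-\bar X_{t_j}|\,\big|\,\mathcal F_{t_k}\bigr]$ dominates it.
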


\noindent Our computation of 
$\mathbb{E}\!\left[
\bar{\mathcal{P}}_{t_{(k+1)}}\!\big(\bar X_{t_{(k+1)}}\big)
\;\middle|\; \bar X_{t_k}= \alpha \right],$
relies on Malliavin calculus, leveraging the identities in \eqref{vareq} and a Monte Carlo scheme.\\
%
\noindent Now, we simulate the wealth process $X_t^x$ using the Euler discretization in equation \eqref{eqt3}, generating $M$ sample trajectories as the algorithm’s core input. For convergence results of the Euler method for mean-field SDEs with jumps, see \citep{ref-sun2021}. Time is discretized on $[0, T]$ using a uniform partition with step size $\Delta t$, yielding $N =T / \Delta t $ subintervals. We employ a small time step to ensure that the numerical scheme accurately captures the dynamics of the mean-field SDE with jumps. The pricing procedure based on the Malliavin Monte Carlo (MMC) method can be outlined as follows. To benchmark our approach, we apply a finite-difference discretization to the test problems and juxtapose its outputs with those of our method. Because an exhaustive discussion of finite differences is outside the remit of this paper, we provide only a succinct summary. 
\textbf{Pricing Algorithm by MMC Method (Step-by-Step)}
\begin{enumerate}
  \item To simulate \( X_{t_k}^{\,n,x} \) using the Euler-scheme approximation of the solution, consider any integer \( N \in \mathbb{N} \) and the partition of the interval \([0, T]\) defined by \( t_k = k \varepsilon \), in this context \( \varepsilon:=\Delta t= \frac{T}{N} \) and \( k = 0, \ldots, N \), as illustrated below
  \begin{equation*}
  \begin{aligned}
  X_{t_{k+1}}^{n,x} ={}& X_{t_k}^{n,x}
      + b\bigl(t_k, X_{t_k}^{n,x}, \rho_{t_k}\bigr)\, \Delta t
      + \sum_{q=1}^k \sigma_m\bigl(t_k, X_{t_k}^{n,x}, \pi_{t_k}\bigr)\, \Delta W_k^{\,q}+ \sum_{j=1}^{\mathcal{N}_k} \lambda\bigl(t_k, X_{t_k}^{n,x}, z_j, \eta_{t_k}\bigr),
  \end{aligned}
  \end{equation*}
  \noindent where $\Delta W_k^{\,q} = W_{t_{k+1}}^{\,q} - W_{t_k}^{\,q}$ are Brownian increments, $\mathcal{N}_k \sim \mathrm{Poisson}(\nu\,\Delta t)$ is the number of jumps on $[t_k,t_{k+1}]$, and $z_j$ is the size of the $j$-th jump.

  \item For all $m = 1, ..., M$, when $k = N$ we have
  \begin{equation*}
  \bar{\mathcal{P}}_{t_N}\!\left(X_{t_N} ^m\right)
  = \Phi\!\left(X_{t_N} ^m\right) \equiv \Phi(X_T).
  \end{equation*}

\item At each time point $t_k$ corresponding to $k = N-1, N-2, \ldots, 1$, we iteratively estimate the conditional expectations using the Malliavin Monte Carlo technique, considering each index $m$ in the set $\{1, 2, \ldots, M\}$. Assuming $\alpha = X_{t_k} ^m$, we arrive at the following implications:
\begin{align*}
  \bar{\mathcal{P}}_{t_k}\!\left(X_{t_k} ^m\right)
=\max\!\Biggl\{
    \Phi\!\left(X_{t_k} ^m\right),\;
    e^{-r\varepsilon}\,
    \underbrace{\mathbb{E}\!\left[
      \bar{\mathcal{P}}_{t_{k+1}}\!\left(X_{t_{k+1}}\right)
      \,\middle|\, X_{t_k}= X_{t_k} ^m
    \right]}_{\clubsuit}  \Biggr\}.
\end{align*}
\noindent where
\begin{align}\label{MMC}
&\mathbb{E}\!\big[  \bar{\mathcal{P}}_{t_{k + 1}}\!(X_{t_{k + 1}})\,|\,X_{t_k} = X_{t_k}^m \big] \notag\\
&= \frac{\mathbb{E}\left[\bar{\mathcal{P}}_{t_{k + 1}}\!(X_{t_{k + 1}})\Big(\psi(X_{t_k} - X_{t_k} ^m) + \Pi\big[\mathcal H(X_{t_k} - X_{t_k} ^m) - \Psi(X_{t_k} - X_{t_k} ^m)\big]\Big) \right]}{\mathbb{E}\Big[\psi(X_{t_k} - X_{t_k} ^m) + \Pi\big[\mathcal H(X_{t_k} - X_{t_k} ^m) - \Psi(X_{t_k} - X_{t_k} ^m)\big] \Big]} \notag\\
&= \frac{\sum\limits_{l=1}^{M} \left[\bar{\mathcal{P}}_{t_{k + 1}}\!(X_{t_{k + 1}}^l)\Big(\psi(X_{t_k} ^l - X_{t_k} ^m) + \Pi^l\big[\mathcal H(X_{t_k} ^l - X_{t_k} ^m) - \Psi(X_{t_k} ^l - X_{t_k} ^m)\big]\Big) \right]}{\sum\limits_{l=1}^{M} \Big[\psi(X_{t_k} ^l - X_{t_k} ^m) + \Pi^l\big[\mathcal H(X_{t_k} ^l - X_{t_k} ^m) - \Psi(X_{t_k} ^l - X_{t_k} ^m)\big] \Big]}.
\end{align}
 \noindent The value of $\Pi^l$ is determined based on the expression provided in \eqref{Pi}. To sort the wealth at time \( t_n \) in a specified manner, such as \( X_{t_n}^1 \leq X_{t_n}^2 \leq \cdots \leq X_{t_n}^M \), we can adopt an iterative approach as follows. Starting with the \( M \)-th path, the summations initially consist of a single term associated with this path. Once the \( M \)-th path is addressed, we proceed to the \( (M-1) \)-th path, where the sums incorporate two terms: one for the \( M \)-th path and another for the \( (M-1) \)-th path. Given that we have already determined the solution for the \( M \)-th path, we can leverage this information to solve for the \( (M-1) \)-th path in a similar manner. This iterative procedure continues until all necessary path solutions are obtained.  This guarantees a significant reduction in computational complexity, boosting the overall efficiency of the implementation. Ultimately, the samples \( \left( \bar{\mathcal{P}}_\varepsilon(X_\varepsilon^m)\right)_{m=1,\ldots,M} \) are available.

  \item Determination of the price through the following relationship
\begin{align*}
\bar{\mathcal{P}}_0(x) &= \max \left( \Phi(x), e^{-r \varepsilon} \frac{1}{M} \sum_{m=1}^{M} \bar{\mathcal{P}}_\varepsilon(X_\varepsilon^m) \right).
\end{align*}
\end{enumerate}
\noindent Finally, it is important to highlight that a localization function has been employed to mitigate variance in the implementation of this method. In this context, relationships \eqref{si} and \eqref{lambdaa} have been utilized in the computation of \eqref{MMC}.



%





\end{document}